\journal{Transportation Research Part C: Emerging Technologies}
\def\ps@pprintTitle{%
 \let\@oddhead\@empty
 \let\@evenhead\@empty
 \def\@oddfoot{}%
 \let\@evenfoot\@oddfoot}
\newtheorem{theorem}{Theorem}
\newtheorem{proposition}{Proposition}
\newcommand{\lb}{\textsc{LB-SFRVCP-T}}
\newcommand{\exact}{\textsc{E-SFRVCP-T}}
\newcommand{\norm}[1]{\left\lVert#1\right\rVert}
\DeclareMathOperator*{\argmin}{arg\,min}
\begin{document}

\begin{frontmatter}
	\title{A Stochastic Electric Vehicle Routing Problem under Uncertain Energy Consumption}

\author[1]{Andrea Spinelli}
\author[2]{Dario Bezzi}
\author[3,4]{Ola Jabali}
\author[1]{Francesca Maggioni\corref{mycorrespondingauthor}}
		\ead{francesca.maggioni@unibg.it}
		\cortext[mycorrespondingauthor]{Corresponding author}
\address[1]{Dipartimento di Ingegneria Gestionale, dell'Informazione e della Produzione, Universit\`{a} di Bergamo, Viale G. Marconi 5, Dalmine 24044, Italy}
\address[2]{Dipartimento di Ingegneria dell'Energia Elettrica e dell'Informazione ``G. Marconi'', Universit\`{a} di Bologna, Viale del Risorgimento 2, Bologna 40136, Italy}
\address[3]{Dipartimento di Elettronica, Informazione e Bioingegneria, Politecnico di Milano, Via G. Ponzio 34, Milano 20133, Italy}
\address[4]{HEC Montréal, Montréal, Canada}

\begin{abstract}
The increasing adoption of Electric Vehicles (EVs) for service and goods distribution operations has led to the emergence of Electric Vehicle Routing Problems (EVRPs), a class of vehicle routing problems addressing the unique challenges posed by the limited driving range and recharging needs of EVs. While the majority of EVRP variants have considered deterministic energy consumption, this paper focuses on the Stochastic Electric Vehicle Routing Problem with a Threshold recourse policy (SEVRP-T), where the uncertainty in energy consumption is considered, and a recourse policy is employed to ensure that EVs recharge at Charging Stations (CSs) whenever their State of Charge (SoC) falls below a specified threshold. We formulate the SEVRP-T as a two-stage stochastic mixed-integer second-order cone model, where the first stage determines the sequences of customers to be visited, and the second stage incorporates charging activities. The objective is to minimize the expected total duration of the routes, composed by travel times and recharging operations. To cope with the computational complexity of the model, we propose a heuristic based on an Iterated Local Search (ILS) procedure coupled with a Set Partitioning problem. To further speed up the heuristic, we develop two lower bounds on the corresponding first-stage customer sequences. Furthermore, to handle a large number of energy consumption scenarios, we employ a scenario reduction technique. Extensive computational experiments are conducted to validate the effectiveness of the proposed solution strategy and to assess the importance of considering the stochastic nature of the energy consumption. The research presented in this paper contributes to the growing body of literature on EVRP and provides insights into managing the operational deployment of EVs in logistics activities under uncertainty.
\end{abstract}

\begin{keyword}
			Routing \sep Electric Vehicles \sep Uncertain Energy Consumption \sep Stochastic Programming \sep Iterated Local Search \sep Scenario Reduction
		\end{keyword}
		
	\end{frontmatter}

\section{Introduction}
Several nations and local governments have established measures aimed at increasing the commercial use of Electric Vehicles (EVs) and progressively phasing out conventional vehicles. Indeed, sales of electric light commercial vehicle worldwide have almost doubled in 2022 relative to 2021 to more than 310,000 vehicles, making their market share around 3.6\%. Moreover, electric medium- and heavy-duty truck sales totalled nearly 60,000 in 2022, which constitutes 1.2\% of the total number of registrations for that category worldwide (see \cite{iea2023}). 

Logistics companies using EVs must plan the operation of such vehicles considering limited driving ranges and slow charging times. Specifically, planning recharging operations along the EV routes is particularly relevant in the context of the mid- and long-haul logistics, since the driving range of medium-duty EVs remains limited (see~\cite{schiffer2018electric}, \cite{schiffer2021integrated}). The reader is referred to~\cite{juan2016electric} for a survey on various research challenges related to the introduction of EVs in logistics and transportation.

A number of scientific contributions have focused on the operational challenges of using EVs in a logistics context. In this framework, a commonly studied problem is the Electric Vehicle Routing Problem (EVRP, see~\cite{kucukoglu2021electric} for a survey), where a set of customers must be visited by a homogeneous fleet of EVs that start and end their routes at a given depot.
In EVRPs, the limited autonomy of EVs is explicitly modeled and Charging Stations (CSs) are often assumed to be privately owned by the EV operator (see~\cite{froger2019improved}). This latter assumption entails that the charging demand from EVs not owned by the operator may be ignored. Each CS may host a different charging technology (e.g., fast, medium or slow). In the most general EVRP case, charging times are assumed to follow nonlinear charging functions, and partial charging is allowed. This entails that establishing which CS to visit and how much to charge at it are part of the decision variables.

The vast majority of the literature on EVRPs assumes the EV energy consumption to be deterministic and proportional to the traveled distance (see~\cite{lam2022branch}). In reality, energy consumption is not fully predictable, as it may heavily depend on uncertain factors like travel speed, driving style, traffic conditions, temperature and wind speed (see~\cite{asamer2016sensitivity}). As ignoring such uncertainties may lead an EV to get stranded without battery, it is pivotal to account for them when planning EV routes. To this end, in this paper we consider an EVRP with stochastic energy consumption. In order to cope with the uncertainty, we adopt a \textit{threshold recourse policy}. Given a fixed sequence of customers to visit, this policy entails that when the State of Charge (SoC) of the EV falls below a given threshold, the vehicle will immediately detour towards a CS and charge a sufficient amount of energy to bring its SoC at the subsequent customer to a given fixed level. The resulting problem is called the Stochastic Electric Vehicle Routing Problem with a Threshold recourse policy (SEVRP-T), and it has been introduced by  \cite{bezzi2023threshold}. In this problem, CSs with multiple technologies are considered, and nonlinear charging functions are approximated via piecewise linear functions. Notably,  while  detour decisions in the classical EVRP are taken at the nodes, in the SEVRP-T such decisions may be taken at any point while traversing an arc. Indeed, an EV may reach the threshold SoC level while traversing an arc, at which point it will immediately head towards a suitable CS, rather than fully traversing the arc. As the energy consumption is stochastic, the departing  SoC from a node is uncertain as well. Thus, the length of a detour cannot be calculated a priori as part of the input data.

In this paper, we formulate SEVRP-T as a two-stage stochastic mixed-integer second-order cone model, where the first-stage decisions determine the sequence of customers to visit by each EV, while the second-stage ones incorporate charging activities when needed. To take into account uncertain energy consumption, we consider a discrete set of scenarios. Our formulation is an improved version of the one presented in \cite{bezzi2023threshold}. The objective is to minimize the expected total duration of the routes, composed by travel times and recharging times. The resulting model is very challenging as it combines stochasticity and discrete decisions. Therefore, to cope with realistic instances, we propose a heuristic for the SEVRP-T. The heuristic is made up of two components: a \textit{route generator} and a \textit{solution assembler}. The route generator builds a pool of high-quality routes, which are sent to a Set Partitioning formulation at the end. This formulation selects (i.e., assembles) the most promising combination of routes from the pool, such that each customer is served by a single EV. The route generator relies on an Iterated Local Search procedure  (ILS), which uses a Variable Neighborhood Descent metaheuristic (VND). 
Additionally, we apply a scenario reduction technique, which allows us to handle instances with  large sets of energy consumption  scenarios. To evaluate route durations within the VND, we formulate and exactly solve a Stochastic Fixed Route Vehicle Charging Problem with a Threshold policy (SFRVCP-T). Furthermore, we derive two lower bounds on the duration of fixed routes. We use these bounds to filter unpromising moves. To validate the performance of our proposed heuristic, we have carried out extensive computational experiments on sets of instances adapted from available benchmarks. For small-sized instances which can be solved by an off-the-shelf solver, we further benchmark its performance. 

Summarizing, our contributions are as follows:
\begin{itemize}
\item Propose a two-stage stochastic programming formulation for the Stochastic Electric Vehicle Routing Problem with a Threshold recourse policy and analyze its key characteristics;
\item Design a heuristic algorithm based on an ILS with Set Partitioning to handle large problem instances;
\item Derive lower bounds on the duration of fixed routes to filter unpromising moves during the VND phase of the ILS heuristic;
\item Apply a scenario reduction technique (see \cite{HeiRom2003}) to maintain low the number of considered energy consumption scenarios;
\item Provide extensive numerical experiments with the aim of testing the proposed heuristic on instances derived from the EVRP literature and compare its performance with state-of-the-art solvers.

\end{itemize}

The remainder of the paper is organized as follows. In Section~\ref{sec:litrev} we present an overview of the EVRP literature. In Section~\ref{sec:problem} we formally introduce the SEVRP-T and present its mathematical formulation.  We describe our heuristic algorithm in Section~\ref{sec:solmethod}, and discuss the scenario reduction technique in Section~\ref{sec_scen_red_theory}. In Section~\ref{sec:results} we present our computational results. Finally, we summarise our conclusions and outline future research perspectives in Section~\ref{sec:conclusions}.

\section{Literature review} \label{sec:litrev}

The EVRP is an extension of the traditional Vehicle Routing Problem (VRP, see \cite{TothVigo2014}) whose  objective is to serve a given set of customers by establishing least-cost routes for a fleet of homogeneous vehicles. These are EVs in the case of the EVRP, and thus battery capacity and charging facilities are considered.

When dealing with EVRPs, various modeling elements have to be taken into account, including the charging process of battery and the energy consumption between customers'  visits. For recent surveys on the EVRP and its variants, the reader is referred to~\cite{macrina2020green}, \cite{kucukoglu2021electric} and \cite{xiao2021electric}.

In the extant literature on EVRPs, different charging policies are explored. In the Green Vehicle Routing Problem (GVRP, see \cite{erdougan2012green}) EVs fully recharge their batteries upon visiting a CS. The corresponding charging time is either assumed to be constant (e.g., \cite{andelmin2017exact}) or linearly dependent on the SoC of the vehicle when arriving at the CS (e.g., \cite{schneider2014electric}). Whenever a partial charging policy is adopted, the charging time is included as a decision variable of the model. In contrast to a full charging policy, a partial one results in savings in both energy consumption and time (see \cite{froger2019improved}). Different works assume that the charging process is linear (see \cite{felipe2014heuristic}, \cite{desaulniers2016exact} and \cite{WanZha2023}). However, in practice the charge retrieved at a CS is a nonlinear concave function of the charging time (see \cite{pelletier2017battery}). To account for this, several authors modeled the charging process using piecewise linear functions (e.g., \cite{montoya2017electric} and \cite{KleSch2023}). With this choice, the computational complexity is increased by the effect of breakpoints introduced to model the varying charging rates of the battery (see \cite{DonKocAlt2022}). Alternative approximations of the charging function are proposed in \cite{Lee2021} and \cite{Sch2024}.

With respect to the charging configuration, different technologies exist, i.e., normal, fast, and super-fast chargers (see \cite{KesCat2018}). This aspect, in conjunction with a partial recharging policy, has been explored in \cite{felipe2014heuristic}. In~\cite{froger2019improved} two compact mixed-integer linear programming formulations have been designed for the EVRP with nonlinear charging functions. The authors propose an arc-based tracking of the time and the SoC rather than the traditional node-based tracking. In addition, they develop an exact labeling algorithm to find the optimal charging decisions on the given routes. Recently, \cite{bezzi2023route} have presented a branch-and-price algorithm for EVRP  with partial recharges assuming linear charging processes pertaining to multiple  technologies.

The overwhelming majority of EVRP studies assume that all parameters are known with certainty when taking decisions. In such a context, the EV energy consumption is one of the most relevant parameters. Typically, it is assumed to be a linear function of traveled distance (see \cite{felipe2014heuristic}, \cite{BreBalHarVid2019} and \cite{AlmQuiAngBanDj2020}). However, to achieve more accurate and realistic results, its computation should account for multiple factors, both exogenous and endogenous, including road quality, vehicle characteristics, cargo weight, and environmental conditions (see \cite{kucukoglu2021electric}). In \cite{GoeSch2015} a nonlinear energy consumption model incorporating vehicle speed, road gradient and cargo load is proposed. In \cite{bruglieri2023matheuristic} an EVRP with time windows is addressed considering different factors for the energy consumption, i.e., payload and vehicle speed. Possible stops at the CSs are permitted en route, allowing for partial recharge. Recently, \cite{XiXuYaGuoZha2024} have introduced drivetrain losses and traffic congestion as a nonlinear function of vehicle speed, incorporating them into a comprehensive energy consumption model. Nonetheless, each additional factor complicates the computation of the energy consumption rate. Thus, there is a  crucial trade-off between solution times and the number, as well as the nature, of the considered factors in modeling energy consumption (see \cite{kucukoglu2021electric}). \cite{asamer2016sensitivity} have conducted a sensitivity analysis assessing the impact of various factors on energy consumption, such as total mass, efficiency of driving and rolling friction coefficient. The authors show that several factors, even if  difficult to be precisely measured, significantly affect the estimation. Therefore, it is reasonable to treat such factors as uncertain, opening the door to stochastic and robust optimization approaches.

\cite{bruni2020electric} have introduced an EVRP with stochastic energy consumption for a single vehicle and formulated it as a two-stage stochastic programming model. Tactical routes are operated on a daily basis, being energy feasible on expectation. It is assumed that the energy consumption over the entire network is revealed at the beginning of each operational period. At which point, en route recharging activities are possibly added, allowing the EV to travel from a customer node (or from the depot) to a suitable CS. In our problem, we assume that the uncertain energy consumption is only revealed when an arc starts being traversed by an EV. A probabilistic Bayesian machine learning approach is explored by \cite{basso2021electric} to predict the expected energy consumption and its variance on a road network. Specifically, an EV routing model composed of two stages is presented. In the first stage, minimum energy paths connecting the nodes are devised, while in the second stage the best order to visit the customers is determined. If necessary, partial charging activities are planned. The information provided by the machine learning techniques are cast into the model through a set of chance-constraints, assuring that the SoC of the vehicles will never go below a fixed threshold within a confidence interval. \cite{BassoKulSanQu2022} have considered stochasticity in terms of both energy consumption and customer demands. The resulting stochastic dynamic EVRP is modelled through a Markov decision process and solved through reinforcement learning techniques.

To the best of our knowledge, the first robust EVRP optimization model considering energy consumption uncertainty has been proposed by \cite{pelletier2019electric}. The aim is to plan minimum cost routes with pre-specified guarantees that they will not fail (i.e., run out of battery). Randomness is directly integrated in the model with respect to various uncertainty sets (e.g., boxed, budgeted and ellipsoidal uncertainty sets). \cite{JeoGhaZufNat2024} have proposed an adaptive robust optimization model for EVs with time windows and uncertain energy consumption rate. A partial recharging policy is adopted with a linear recharging function. Adaptive decisions are made after the uncertainty realization on each arc and are related to the SoC,  the service time of the EV at each node, and the battery recharging amount. The aim is to minimize the worst-case energy consumption while ensuring deliveries at the assigned time windows without running out of charge. The model is solved through a column-and-constraint generation based heuristic, which is coupled with a variable neighborhood search and alternating direction algorithm.

EVRPs may be subject to  several sources of uncertainty, e.g., customer demands, waiting times at CSs and their availability. \cite{LiuWanYinChen2023} have considered a pickup and delivery problem using EVs under demand uncertainty. The model is formulated as a two-stage adaptive robust model where the uncertain demand falls within a budget-type uncertainty set. The accumulated load variables are treated as adaptive with respect to the demand scenario, while the routing,  the service start times and remaining battery capacities along a route are fixed before the realization of uncertain demands. \cite{DasErrJabMalu2024} have optimized an EV route from an origin to a destination considering that public CSs occupancy indicators are known in real-time. Whenever new information on the status of a CS is received, the route is dynamically reoptimized.

In this paper, we present a two-stage stochastic programming model for an EVRP under uncertain energy consumption. In the first stage we determine customer sequences, each of which is to be served by a single EV that starts and ends its route at the depot. In the second stage we include partial recharging operations following the threshold recourse policy. Furthermore, we account for different charging technologies, each modeled by a piecewise linear concave function. To the best of our knowledge, the present work, together with \cite{bezzi2023threshold}, represent the only contributions in the literature in which detours to CSs may be performed from any arbitrary intermediate point of an arc, and not only from customers or depot locations. However, with respect to \cite{bezzi2023threshold} where only small instances with 10 customers were considered and no solution algorithm was provided, in this paper we propose an enhanced model and a heuristic algorithm. As a result, we are able to tackle instances with up to 80 customers within reasonable computational times.

\section{Problem description and formulation} \label{sec:problem}
Let $\mathcal{I}$ be the set of customers and $\mathcal{C}$ be the set of CSs at which the EVs may fully or partially recharge their batteries. The customers are served by a homogeneous fleet of EVs located at a single depot denoted by node 0. To allow multiple visits to a CS, as common in flow-based EVRP formulations, we define a set $\mathcal{K}$ which includes $n$ copies of each CS in $\mathcal{C}$, i.e., $|\mathcal{K}| =n |\mathcal{C}|$. Each CS $k\in\mathcal{K}$ is equipped with a charging technology described by a piecewise linear concave charging function with set of breakpoints $\mathcal{B}_k = \{0,\ldots,b_k\}$. We denote by $c_{kb}$ and $a_{kb}$ the charging time and the SoC of breakpoint $b \in \mathcal{B}_k$ at CS $k\in\mathcal{K}$, respectively. Let $\mathcal{I}^+ := \mathcal{I} \cup \{0\}$ and $\mathcal{V}:=\mathcal{I}^+ \cup \mathcal{K}$. Our problem is defined on the complete graph $\mathcal{G}:=(\mathcal{V},\mathcal{A})$. Let $(X_i, Y_i) \in \mathbb{R}^+ \times \mathbb{R}^+$ be the Cartesian coordinates of node $i \in \mathcal{V}$. Traveling from node $i \in \mathcal{V}$ to node $j \in \mathcal{V}$ incurs a deterministic driving time $t_{ij} \in \mathbb{R}^+$, assumed to be linearly correlated with Euclidean distance $d_{ij} \in \mathbb{R}^+$ between nodes $i$ and $j$. The EVs leave the depot fully charged, with a SoC equal to $Q^{max}$.

To account for EV energy consumption  uncertainty, we consider a set $\mathcal{S}$ of possible energy consumption scenarios. For every scenario $s \in \mathcal{S}$, we define $e_{ijs} \in \mathbb{R}^+$ as the energy consumption  along arc $(i,j)\in\mathcal{A}$ under scenario $s$ (i.e., in a given scenario we assume that the energy consumption remains constant along the arc). We denote by $p_s\in[0,1]$ the probability of scenario $s\in\mathcal{S}$, such that $\sum_{s \in \mathcal{S}}{p_s} = 1$. 

We model the SEVRP-T as a two-stage stochastic program with the  objective of  minimizing the expected total EV travel times and charging times. The first-stage decisions consist of routes satisfying the following conditions: 1) each customer is served exactly once by a single vehicle; 2) each route starts and ends at the depot. First-stage routes are described by binary decision variables $x_{ij}$, taking value 1 if arc $(i,j)$ with $i,j\in\mathcal{I}^+,i\neq j$ is traversed and 0 otherwise. No en route charging operations are considered in the first stage.  Furthermore, we assume that the visiting sequence determined in the first stage must be respected in the second stage. This assumption aligns with the notion of \textit{a priori} routes, commonly considered in the stochastic VRP literature (e.g., \cite{OyoArnWood2018}).  Indeed, a priori routes are useful when delivery times are communicated to customers prior to starting the route. Maintaining the sequence indirectly helps in preserving such communicated arrival times. The actual energy consumption $e_{ijs}$ in scenario $s\in\mathcal{S}$ of arc $(i,j)\in\mathcal{A}$ is revealed when it starts being traversed by an EV at the second stage, and the energy consumption per unit of distance is presumed to be equal along the arc in scenario $s$ (i.e., $e_{ijs}/d_{ij}$). 
As the realized energy consumption may lead the EV to run out of battery, we propose the following threshold recourse policy to govern the second stage. Whenever the SoC of an EV drops to $Q^T <Q^{max}$, while driving towards a node or at a node, the EV  determines a CS to perform en route charging. Intuitively, $Q^T$ may be viewed as the tolerance up to which an EV driver feels the necessity to recharge. This mimics the behavior of conventional vehicle drivers who tend to look for refueling stations after a certain level of fuel is reached. The EV will head to its chosen CS, recharge and visit the subsequent customer on its a priori route. We assume that, when performing a detour along arc $(i,j)$ with $i\in\mathcal{I}^+,j\in\mathcal{I},i\neq j$, the energy consumption from the detour point (i.e., once the SoC of an EV drops to $Q^T$) to the chosen CS and from the CS to customer $j\in\mathcal{I}$ follows the same energy consumption of arc $(i,j)$ under scenario $s\in\mathcal{S}$, which equals $e_{ijs}/d_{ij}$. The amount of energy to charge is such that the EVs' SoC upon arriving at the subsequent customer is exactly $Q^{G}>Q^{T}$. Additionally, we allow the SoC of each EV to be lower than $Q^{T}$ when returning to the depot in the last arc of the planned route. Lastly, we impose that at most a single visit to a CS between two consecutive nodes should be performed. In Figure \ref{fig_example_thresholdrecoursepolicy} we provide an example of the proposed threshold recourse policy applied to an instance with three customers and two CSs.

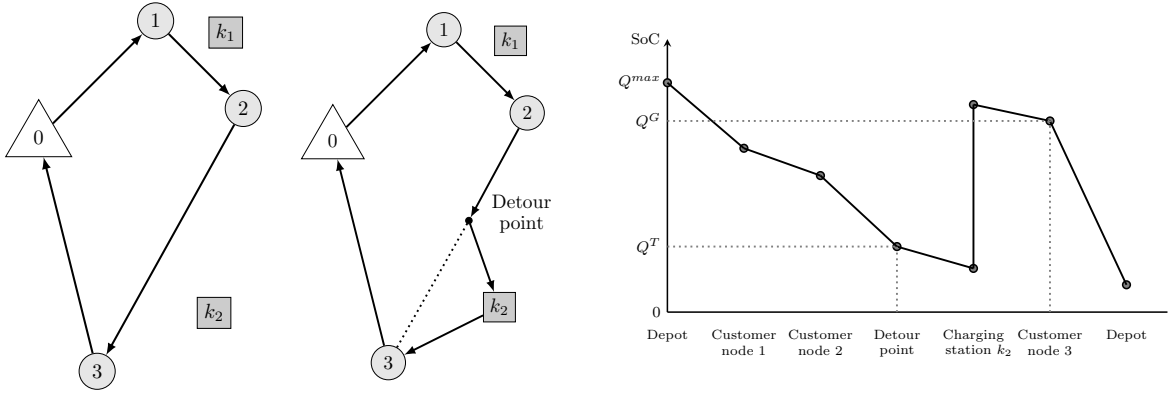
\begin{figure}[ht!]
\begin{minipage}{0.15\textwidth}
	\begin{center}
		\resizebox{3.5cm}{!}{
			\begin{tikzpicture}
				[node distance={15mm},
                detour/.style  = {circle, draw, thin, fill = black, scale = 0.2},
                depot/.style = {draw, regular polygon, regular polygon sides=3},
                cs/.style = {rectangle, draw, thin, fill = gray!40, scale = 1},
                customer/.style  = {circle, draw, thin, fill = gray!20, scale = 1}] 
				\node[depot] (0) at (0,0) {0}; 
				\node[customer] (1) at (2,2) {1}; 
				\node[customer] (2) at (3.5,0.5) {2};
                \node[cs] (3) at (3,-3) {$k_2$};
                \node[cs] (4) at (3.2,1.8) {$k_1$};
                \node[customer] (5) at (1,-4) {3};

				\draw[-latex,line width=1pt] (0) -- (1);
                \draw[-latex,line width=1pt] (1) -- (2);
                \draw[-latex,line width=1pt] (2) -- (5);
                \draw[-latex,line width=1pt] (5) -- (0);

		\end{tikzpicture}}
	\end{center}
\end{minipage} 
\hfill
\begin{minipage}{0.15\textwidth}
	\begin{center}
		\resizebox{3.5cm}{!}{
			\begin{tikzpicture}
				[node distance={15mm},
                detour/.style  = {circle, draw, thin, fill = black, scale = 0.2},
                depot/.style = {draw, regular polygon, regular polygon sides=3},
                cs/.style = {rectangle, draw, thin, fill = gray!40, scale = 1},
                customer/.style  = {circle, draw, thin, fill = gray!20, scale = 1}] 
				\node[depot] (0) at (0,0) {0}; 
				\node[customer] (1) at (2,2) {1}; 
				\node[customer] (2) at (3.5,0.5) {2};
                \node[cs] (3) at (3,-3) {$k_2$};
                \node[cs] (4) at (3.2,1.8) {$k_1$};
                \node[customer] (5) at (1,-4) {3};
                \node[detour] (6) at (2.45,-1.44) {f};
                \node at (3.4,-1.1) {Detour};
                \node at (3.4,-1.5) {point};

				\draw[-latex,line width=1pt] (0) -- (1);
                \draw[-latex,line width=1pt] (1) -- (2);
                \draw[-latex,line width=1pt] (2) -- (6);
                \draw[dotted,line width=1pt] (6) -- (5);
                \draw[-latex,line width=1pt] (6) -- (3);
                \draw[-latex,line width=1pt] (3) -- (5);
                \draw[-latex,line width=1pt] (5) -- (0);

		\end{tikzpicture}}
	\end{center}
\end{minipage} 
\hfill
\begin{minipage}{0.5\textwidth}
	\begin{center}
		\centering
		\resizebox{0.95\textwidth}{!}{
			\begin{tikzpicture}
\draw [thick] (0, 0) -- (9.15, 0);
\draw [-stealth, thick] (0, 0) -- (0, 5) node[left]{\footnotesize SoC};   
\node [left] (tick1) at (0.0, 0.0) {\footnotesize 0};
\node [left] (tick2) at (0.0, 1.2) {\footnotesize $Q^{T}$};
\node [left] (tick3) at (0.0, 3.5) {\footnotesize $Q^{G}$};
\node [left] (tick4) at (0.0, 4.2) {\footnotesize $Q^{max}$};

\node [below] (depotin) at (0, -0.2) {\scriptsize Depot};
\node [below] (cust1) at (1.4, -0.2) {\scriptsize Customer};
\node at (1.4,-0.7) {\scriptsize node 1};
\node [below] (cust2) at (2.8, -0.2) {\scriptsize Customer};
\node at (2.8,-0.7) {\scriptsize node 2};
\node [below] (detour) at (4.2, -0.2) {\scriptsize Detour};
\node at (4.2,-0.7) {\scriptsize point};
\node [below] (cs) at (5.6, -0.2) {\scriptsize Charging};
\node at (5.7,-0.7) {\scriptsize station $k_2$};
\node [below] (cust3) at (7.0, -0.2) {\scriptsize Customer};
\node at (7.0,-0.7) {\scriptsize node 3};
\node [below] (depotfin) at (8.4, -0.2) {\scriptsize Depot};

\fill [fill = gray, draw = black, thick] (0.0, 4.2) circle (2pt) node {};
\fill [fill = gray, draw = black, thick] (4.2, 1.2) circle (2pt) node {};
\fill [fill = gray, draw = black, thick] (7.0, 3.5) circle (2pt) node {};

\fill [fill = gray, draw = black, thick] (1.4, 3) circle (2pt) node {};
\fill [fill = gray, draw = black, thick] (2.8, 2.5) circle (2pt) node {};
\fill [fill = gray, draw = black, thick] (5.6, 0.8) circle (2pt) node {};
\fill [fill = gray, draw = black, thick] (5.6, 3.8) circle (2pt) node {};
\fill [fill = gray, draw = black, thick] (8.4, 0.5) circle (2pt) node {};

\draw[line width=1pt] (0, 4.2) -- (1.4,3);
\draw[line width=1pt] (1.4, 3) -- (2.8,2.5);
\draw[line width=1pt] (2.8, 2.5) -- (4.2,1.2);
\draw[line width=1pt] (4.2, 1.2) -- (5.6,0.8);
\draw[line width=1pt] (5.6, 0.8) -- (5.6,3.8);
\draw[line width=1pt] (5.6, 3.8) -- (7.0,3.5);
\draw[line width=1pt] (7.0, 3.5) -- (8.4,0.5);

\draw [dotted, gray, line width=1pt] (0.0,3.5) -- (7.0,3.5);
\draw [dotted, gray, line width=1pt] (0.0,1.2) -- (4.2,1.2);

\draw [dotted, gray, line width=1pt] (4.2, 0) -- (4.2,1.2);
\draw [dotted, gray, line width=1pt] (7.0, 0) -- (7.0,3.5);
\end{tikzpicture}}
	\end{center}
\end{minipage} 
\caption{An example with three customers $(1,2,3)$ and two CSs $(k_1,k_2)$. Left panel: the first-stage route $\{0,1,2,3,0\}$ is determined. Middle panel: after the realization of the uncertainty under a certain scenario, the threshold recourse policy is implemented, prescribing a stop at CS $k_2$ between customer nodes 2 and 3 to recharge the battery. Right panel: SoC when implementing the threshold recourse policy in the second stage.}
\label{fig_example_thresholdrecoursepolicy}
\end{figure}

In order to model the threshold recourse policy, we introduce binary variables $\phi_{ijks}$ and continuous variables $z_{ijs}$ for each arc $(i,j)$ with $i,j\in\mathcal{I}^+,i\neq j$ in each scenario $s\in\mathcal{S}$: variables $\phi_{ijks}$ take value 1 if the vehicle traversing arc $(i,j)$ detours towards CS $k\in\mathcal{K}$ under scenario $s\in\mathcal{S}$ and 0 otherwise, while $z_{ijs}\in[0,1]$ represent the portion of arc $(i,j)$ actually traversed from node $i$ to the detour point i.e., the point when the SoC reaches $Q^T$. Note that variables $z_{ijs}$ do not depend on the CS. Auxiliary variables $(X_{ijs}, Y_{ijs}) \in \mathbb{R}^+ \times \mathbb{R}^+$ keep track of the Euclidean coordinates of the detour point in arc $(i,j)$ with $i,j\in\mathcal{I}^+,i\neq j$ under scenario $s\in\mathcal{S}$, while $\ell_{ijks}$ represent the distance between the detour point and CS $k\in\mathcal{K}$. Finally, we make use of classical node-based tracking variables for piecewise charging function (see~\cite{froger2019improved}): $y_{is} \in \mathbb{R}^+$ represent the SoC of a vehicle when it departs from node $i \in \mathcal{V}$ under scenario $s \in \mathcal{S}$; $\underline{q}_{ks}, \overline{q}_{ks} \in \mathbb{R}^+$ represent the SoC of a vehicle when it enters/leaves CS $k \in \mathcal{K}$ under scenario $s \in \mathcal{S}$, respectively; $\underline{c}_{ks}, \overline{c}_{ks} \in \mathbb{R}^+$ represent scaled start/end time for charging a vehicle at CS $k \in \mathcal{K}$ under scenario $s \in \mathcal{S}$, according to the charging function of CS $k$, respectively; $\underline{w}_{kbs}, \overline{w}_{kbs} \in \{0,1\}$ take value 1 if the SoC of a vehicle is between $a_{k(b-1)}$ and $a_{kb}$, with $b\in\mathcal{B}_k\setminus \{0\}$, when entering/leaving CS $k \in \mathcal{K}$ under scenario $s \in \mathcal{S}$, respectively, and 0 otherwise; $\underline{\lambda}_{kbs}, \overline{\lambda}_{kbs} \in \mathbb{R}^+$ are continuous coefficients associated with breakpoint $b \in \mathcal{B}_k$ under scenario $s \in \mathcal{S}$ when the EV enters/leaves CS $k \in \mathcal{K}$, respectively.

The notation is summarized in Table~\ref{table:parameters}.

\begin{table}[H]
\caption{Sets, Parameters and Decision Variables.}
\label{table:parameters}
\resizebox{\textwidth}{!}{
\begin{tabular}{|l|l|}
\hline
\multicolumn{2}{|l|}{\textbf{Sets}} \\
\hline
$\mathcal{I}$ & Set of customers. \\
$\mathcal{I}^+ := \mathcal{I} \cup \{0\}$ & Set of customers and depot. \\
$\mathcal{C}$ & Set of Charging Stations (CS). \\
$\mathcal{K}$ & Set of duplicated charging stations.\\
$\mathcal{B}_k$ & Set of breakpoints for the piecewise linear charging function of CS $k \in \mathcal{K}$. \\
$\mathcal{V} := \mathcal{I}^+ \cup \mathcal{K}$ & Set of all nodes. \\
$\mathcal{A}$ & Set of all arcs. \\
$\mathcal{S}$ & Set of energy consumption scenarios. \\
\hline
\multicolumn{2}{|l|}{\textbf{Deterministic Parameters}} \\
\hline
$(X_i, Y_i) \in \mathbb{R}^+ \times \mathbb{R}^+$ & Euclidean coordinates of node $i \in \mathcal{V}$. \\
$d_{ij} \in \mathbb{R}^+$ & Distance between nodes $i \in \mathcal{V}$ and $j \in \mathcal{V}$. \\
$t_{ij} \in \mathbb{R}^+$ & Time consumption between nodes $i \in \mathcal{V}$ and $j \in \mathcal{V}$. \\
$Q^{max} \in \mathbb{R}^+$ & Maximum battery charge level. \\
$Q^{T} \in \mathbb{R}^+$ & Minimum threshold battery charge level, with $Q^{T} < Q^{max}$. \\
$Q^{G} \in \mathbb{R}^+$ & Goal level of energy after each detour, with $Q^{T} < Q^{G} < Q^{max}$. \\
$c_{kb} \in \mathbb{R}^+$ & Charging time of breakpoint $b \in \mathcal{B}_k$ of CS $k \in \mathcal{K}$. \\
$a_{kb} \in \mathbb{R}^+$ & State of Charge (SoC) of breakpoint $b \in \mathcal{B}_k$ of CS $k \in \mathcal{K}$. \\
$M \in \mathbb{R}^+$ & A Big-M number.\\
\hline
\multicolumn{2}{|l|}{\textbf{Stochastic Parameters}} \\
\hline
$p_s \in [0,1]$ & Probability of scenario $s \in \mathcal{S}$. \\
$e_{ijs} \in \mathbb{R}^+$ & Energy consumption between nodes $i \in \mathcal{V}$ and $j \in \mathcal{V}$ under scenario $s \in \mathcal{S}$. \\
\hline
\multicolumn{2}{|l|}{\textbf{Decision Variables}} \\
\hline
$x_{ij} \in \{0,1\}$ & 1 if the EV traverses arc $(i,j)$ with $i,j\in\mathcal{I}^{+}, i \neq j$, 0 otherwise. \\
$u_i \in \mathbb{R}^{+}$ & Dummy variable for subtour elimination constraints involving customer $i \in \mathcal{I}^+$. \\
$\phi_{ijks} \in \{0,1\}$ & 1 if the EV detours along arc $(i,j)$ with $i,j\in\mathcal{I}^{+}, i \neq j$ towards CS $k\in\mathcal{K}$\\ & under scenario $s \in \mathcal{S}$, 0 otherwise. \\
$z_{ijs} \in [0,1]$ & Percentage of arc $(i,j)$ with $i,j\in\mathcal{I}^{+}, i \neq j$ covered from node $i$ to detour point \\ & under scenario $s \in \mathcal{S}$. \\
$y_{is} \in \mathbb{R}^+$ & SoC of the EV when it departs from node $i \in \mathcal{V}$ under scenario $s \in \mathcal{S}$. \\
$\ell_{ijks} \in \mathbb{R}^+$ & Distance between the detour point along arc $(i,j)$ with $i,j\in\mathcal{I}^{+}, i \neq j$ to CS $k\in\mathcal{K}$\\ & under scenario $s \in \mathcal{S}$. \\
$(X_{ijs}, Y_{ijs}) \in \mathbb{R}^+ \times \mathbb{R}^+$ & Euclidean coordinates of the detour point in arc $(i,j)$ with $i,j\in\mathcal{I}^{+}, i \neq j$\\ & when $z_{ijs} > 0$ under scenario $s \in \mathcal{S}$. \\
$\underline{q}_{ks} \in \mathbb{R}^+$ & SoC of the EV when it enters CS $k \in \mathcal{K}$ under scenario $s \in \mathcal{S}$. \\
$\overline{q}_{ks} \in \mathbb{R}^+$ & SoC of the EV when it leaves CS $k \in \mathcal{K}$ under scenario $s \in \mathcal{S}$. \\
$\underline{c}_{ks} \in \mathbb{R}^+$ & Scaled start time for charging the EV at CS $k \in \mathcal{K}$ under scenario $s \in \mathcal{S}$. \\
$\overline{c}_{ks} \in \mathbb{R}^+$ & Scaled end time for charging the EV at CS $k \in \mathcal{K}$ under scenario $s \in \mathcal{S}$. \\
$\underline{w}_{kbs} \in \{0,1\}$ & 1 if the SoC of the EV is between $a_{k(b-1)}$ and $a_{kb}$, with $b\in\mathcal{B}_k\setminus \{0\}$, when entering CS $k \in \mathcal{K}$ \\& under scenario $s \in \mathcal{S}$, 0 otherwise. \\
$\overline{w}_{kbs} \in \{0,1\}$ & 1 if the SoC of the EV is between $a_{k(b-1)}$ and $a_{kb}$, with $b\in\mathcal{B}_k\setminus \{0\}$, when leaving CS $k \in \mathcal{K}$ \\& under scenario $s \in \mathcal{S}$, 0 otherwise. \\
$\underline{\lambda}_{kbs} \in \mathbb{R}^+$ & Continuous coefficient associated with breakpoint $b \in \mathcal{B}_k$ under scenario $s \in \mathcal{S}$ \\ & when the EV enters CS $k \in \mathcal{K}$. \\
$\overline{\lambda}_{kbs} \in \mathbb{R}^+$ & Continuous coefficient associated with breakpoint $b \in \mathcal{B}_k$ under scenario $s \in \mathcal{S}$ \\ & when the EV leaves CS $k \in \mathcal{K}$. \\
\hline
\end{tabular}}
\end{table}

\newpage
The extensive formulation of the SEVRP-T, which corresponds to  a two-stage stochastic mixed-integer second-order cone model,  is  as follows:
\small
\begin{align}
\text{minimize} \sum_{i,j \in \mathcal{I}^+,i \neq j} t_{ij} x_{ij} + \sum_{s \in \mathcal{S}}p_s \left( \sum_{k \in \mathcal{K}}(\overline{c}_{ks} - \underline{c}_{ks}) + \sum_{i,j \in \mathcal{I}^+, i \neq j} t_{ij} z_{ijs} + \sum_{i,j \in \mathcal{I}^+, i \neq j} \sum_{k \in \mathcal{K}} \bigg(\frac{t_{ij}}{d_{ij}} \ell_{ijks} + (t_{kj} - t_{ij})\phi_{ijks}\bigg) \right)  \label{eq:obj}
\end{align}
\vspace*{-0.8cm}
\begin{align}
\text{subject to:} \notag \\
&\sum_{j \in \mathcal{I}^+, j \neq i} x_{ij} = 1 & & \forall i \in \mathcal{I} \label{1_eq:copertura}\\
&\sum_{j \in \mathcal{I}^+, j \neq i} x_{ij} - \sum_{j \in \mathcal{I}^+, j \neq i} x_{ji} = 0 & & \forall i \in \mathcal{I}^+ \label{1_eq:singlevisit}\\
&u_i - u_j + |\mathcal{I}| x_{ij} \leq |\mathcal{I}| - 1 & & \forall i,j \in \mathcal{I}^+, i \neq j \label{1_eq:sec}\\
&\sum_{i,j \in \mathcal{I}^+, i \neq j} \phi_{ijks} \leq 1 & & \forall k \in \mathcal{K}, s \in \mathcal{S} \label{eq:station_single_visit}\\
&\sum_{k \in \mathcal{K}} \phi_{ijks} \leq x_{ij} & & \forall i,j \in \mathcal{I}^+, i \neq j, s \in \mathcal{S} \label{eq:detour_existent_arc}\\
&z_{ijs} \leq \sum_{k \in \mathcal{K}} \phi_{ijks} & & \forall i,j \in \mathcal{I}^+, i \neq j, s \in \mathcal{S} \label{eq:detour_percentage}\\
&\ell_{ijks} \leq M \phi_{ijks} & & \forall i,j \in \mathcal{I}^+, i \neq j, k \in \mathcal{K}, s \in \mathcal{S} \label{eq:ell_zero}\\
&y_{js} \geq Q^{T}\big(x_{ij} - \sum_{k \in \mathcal{K}} \phi_{ijks}\big) & & \forall i \in \mathcal{I}^+, j \in \mathcal{I}, i \neq j, s \in \mathcal{S} \label{eq:y_min_energy}\\
&y_{ks} \leq Q^{max}\sum_{i,j \in \mathcal{I}^+, i \neq j} \phi_{ijks} & &\forall k \in \mathcal{K}, s \in \mathcal{S} \label{eq:y_max_energy} \\
&{\underline{q}_{ks} = \sum_{i,j \in \mathcal{I}^+, i \neq j} \bigg(Q^{T}\phi_{ijks} - \frac{e_{ijs}}{d_{ij}} \ell_{ijks}\bigg)} & &\forall k \in \mathcal{K}, s \in \mathcal{S} \label{eq:battery_cs_level} \\
&y_{is} - y_{js} \geq e_{ijs} \big(x_{ij} - \sum_{k \in \mathcal{K}} \phi_{ijks}\big) - Q^{max}\big(1 - (x_{ij} - \sum_{k \in \mathcal{K}} \phi_{ijks})\big)  & &\forall i \in \mathcal{I}^+, j \in \mathcal{I}, i \neq j, s \in \mathcal{S} \label{eq:y_nodetour1} \\
&y_{is} - y_{js} \leq e_{ijs} \big(x_{ij} - \sum_{k \in \mathcal{K}} \phi_{ijks}\big) + Q^{max}\big(1 - (x_{ij} - \sum_{k \in \mathcal{K}} \phi_{ijks})\big) & &\forall i \in \mathcal{I}^+, j \in \mathcal{I}, i \neq j, s \in \mathcal{S} \label{eq:y_nodetour2} \\
&y_{is} \geq e_{i0s} \big(x_{i0} - \sum_{k \in \mathcal{K}} \phi_{i0ks}\big) - Q^{max}\big(1 - (x_{i0} - \sum_{k \in \mathcal{K}} \phi_{i0ks})\big) & &\forall i \in \mathcal{I}, s \in \mathcal{S} \label{eq:y_nodetour2depot} \\
&y_{ks} - y_{js} \! \geq e_{kjs} \! \! \! \sum_{i \in \mathcal{I}^+, i \neq j} \! \! \! \phi_{ijks} \! - Q^{max} \! \big(1 \! - \! \! \! \sum_{i \in \mathcal{I}^+, i \neq j} \! \! \phi_{ijks}\big) & &\forall j \in \mathcal{I}, k \in \mathcal{K}, s \in \mathcal{S} \label{eq:y_detour1} \\
&y_{ks} - y_{js} \! \leq e_{kjs} \! \! \! \sum_{i \in \mathcal{I}^+, i \neq j} \! \! \! \phi_{ijks} \! + Q^{max} \! \big(1 \! - \! \! \! \sum_{i \in \mathcal{I}^+, i \neq j} \! \! \phi_{ijks}\big) & &\forall j \in \mathcal{I}, k \in \mathcal{K}, s \in \mathcal{S} \label{eq:y_detour2} \\
&y_{ks} \! \geq e_{k0s} \! \! \! \sum_{i \in \mathcal{I}} \! \! \! \phi_{i0ks} \! - Q^{max} \! \big(1 \! - \! \! \! \sum_{i \in \mathcal{I}} \! \! \phi_{i0ks}\big) & &\forall k \in \mathcal{K}, s \in \mathcal{S} \label{eq:y_detourdepot} \\
&y_{js} \geq Q^{G} - Q^{max}\big(1 - \sum_{i \in \mathcal{I}^+, k \in \mathcal{K}, i \neq j} \phi_{ijks}\big) & &\forall j \in \mathcal{I}, s \in \mathcal{S} \label{eq:target1} \\
&y_{js} \leq Q^{G} + Q^{max}\big(1 - \sum_{i \in \mathcal{I}^+, k \in \mathcal{K}, i \neq j} \phi_{ijks}\big) & &\forall j \in \mathcal{I}, s \in \mathcal{S} \label{eq:target2} \\
&e_{ijs} z_{ijs} \geq y_{is} - Q^{T} - Q^{max}(1 - \phi_{ijks}) & & \forall i,j \in \mathcal{I}^+, i \neq j, k \in \mathcal{K}, s \in \mathcal{S} \label{eq:p_detour_point1}\\
&e_{ijs} z_{ijs} \leq y_{is} - Q^{T} + Q^{max}(1 - \phi_{ijks}) & & \forall i,j \in \mathcal{I}^+, i \neq j, k \in \mathcal{K}, s \in \mathcal{S} \label{eq:p_detour_point2}\\
&y_{ks} = \overline{q}_{ks} & &\forall k \in \mathcal{K}, s \in \mathcal{S} \label{eq:24}\\
&y_{0s} = Q^{max} & &\forall s \in \mathcal{S} \label{eq:Qmax_depot}
\end{align}

\begin{align}
&\underline{q}_{ks}\leq \overline{q}_{ks} & &\forall k \in \mathcal{K}, s \in \mathcal{S} \label{eq:26_InputLessOutput}\\
&\underline{q}_{ks} = \sum_{b \in \mathcal{B}_k} \underline{\lambda}_{kbs}a_{kb} & &\forall k \in \mathcal{K}, s \in \mathcal{S} \label{eq:27_LinearCombinationInputCharge} \\
&\underline{c}_{ks} = \sum_{b \in \mathcal{B}_k} \underline{\lambda}_{kbs}c_{kb} & &\forall k \in \mathcal{K}, s \in \mathcal{S} \label{eq:28_LinearCombinationInputCharge} \\
&\sum_{b \in \mathcal{B}_k} \underline{\lambda}_{kbs}=\sum_{b \in \mathcal{B}_k\setminus\lbrace 0\rbrace} \underline{w}_{kbs} & &\forall k \in \mathcal{K}, s \in \mathcal{S} \label{eq:29_WeigthsInputActivedStation} \\
&\sum_{b \in \mathcal{B}_k\setminus \{0\}} \underline{w}_{kbs} = \sum_{i,j \in \mathcal{I}^+, i \neq j} \phi_{ijks} & &\forall k \in \mathcal{K}, s \in \mathcal{S} \label{eq:30_RelationInputStationNode}\\
&\underline{\lambda}_{k0s} \leq \underline{w}_{k1s} & &\forall k \in \mathcal{K}, s \in \mathcal{S} \label{eq:31_Alpha} \\
&\underline{\lambda}_{kbs} \leq \underline{w}_{kbs}+\underline{w}_{k(b+1)s} & &\forall k \in \mathcal{K}, b \in \mathcal{B}_k\setminus \lbrace 0,b_k \rbrace,s \in \mathcal{S} \label{eq:32_Alpha}\\
&\underline{\lambda}_{kb_{k}s} \leq \underline{w}_{kb_{k}s} & &\forall k \in \mathcal{K}, s \in \mathcal{S} \label{eq:33_Alpha}\\
&\overline{q}_{ks} = \sum_{b \in \mathcal{B}_k} \overline{\lambda}_{kbs}a_{kb} & &\forall k \in \mathcal{K}, s \in \mathcal{S} \label{eq:34_LinearCombinationInputCharge} \\
&\overline{c}_{ks} = \sum_{b \in \mathcal{B}_k} \overline{\lambda}_{kbs}c_{kb} & &\forall k \in \mathcal{K}, s \in \mathcal{S} \label{eq:35_LinearCombinationInputCharge} \\
&\sum_{b \in \mathcal{B}_k} \overline{\lambda}_{kbs}=\sum_{b \in \mathcal{B}_k\setminus\lbrace 0\rbrace} \overline{w}_{kbs} & &\forall k \in \mathcal{K}, s \in \mathcal{S} \label{eq:36_WeigthsInputActivedStation} \\
&\sum_{b \in \mathcal{B}_k\setminus \{0\}} \overline{w}_{kbs} = \sum_{i,j \in \mathcal{I}^+, i \neq j} \phi_{ijks} & &\forall k \in \mathcal{K}, s \in \mathcal{S} \label{eq:37_RelationInputStationNode}\\
&\overline{\lambda}_{k0s} \leq \overline{w}_{k1s} & &\forall k \in \mathcal{K}, s \in \mathcal{S} \label{eq:38_Alpha} \\
&\overline{\lambda}_{kbs} \leq \overline{w}_{kbs}+\overline{w}_{k(b+1)s} & &\forall k \in \mathcal{K}, b \in \mathcal{B}_k\setminus \lbrace 0,b_k \rbrace,s \in \mathcal{S} \label{eq:39_Alpha}\\ 
&\overline{\lambda}_{kb_{k}s} \leq \overline{w}_{kb_{k}s} & &\forall k \in \mathcal{K}, s \in \mathcal{S} \label{eq:40_Alpha}\\
&X_{ijs} = X_j z_{ijs} + X_i(1 - z_{ijs}) & &\forall i,j \in \mathcal{I}^+, i \neq j, s \in \mathcal{S} \label{eq:coord_xp_nonlinear}\\
&Y_{ijs} = Y_j z_{ijs} + Y_i(1 - z_{ijs}) & &\forall i,j \in \mathcal{I}^+, i \neq j, s \in \mathcal{S} \label{eq:coord_yp_nonlinear}\\
&\ell_{ijks} + M(1 - \phi_{ijks})  \geq \sqrt{(X_{ijs} - X_k)^2 + (Y_{ijs} - Y_k)^2} & &\forall i,j \in \mathcal{I}^+, k \in \mathcal{K}, s \in \mathcal{S}\label{eq:nonlinear_distance}\\
&x_{ij} \in \{0,1\} & & \forall i,j \in \mathcal{I}^+, i \neq j \label{1_eq:xbin}\\
&u_i \geq 0 & & \forall i \in \mathcal{I}^+ \label{1_eq:u}\\
& \phi_{ijks} \in \{0,1\} & & \forall i,j \in \mathcal{I}^+, i \neq j, k \in \mathcal{K}, s \in \mathcal{S} \label{eq:StocDomainPhi} \\
& 0 \leq z_{ijs} \leq 1 & & \forall i,j \in \mathcal{I}^+, i \neq j, s \in \mathcal{S} \label{eq:StocDomainZ} \\
& \ell_{ijks} \geq 0 & & \forall i,j \in \mathcal{I}^+, i \neq j, k \in \mathcal{K}, s \in \mathcal{S} \label{eq:StocDomainPL} \\
& X_{ijs}, Y_{ijs} \geq 0 & & \forall i,j \in \mathcal{I}^+, i \neq j, s \in \mathcal{S} \label{eq:StocDomainX_pY_p}\\
& y_{is} \geq 0 & & \forall i \in \mathcal{V}, s \in \mathcal{S} \label{eq:StocDomainY} \\
&\underline{q}_{ks}, \overline{q}_{ks}, \underline{c}_{ks}, \overline{c}_{ks} \geq 0 & & \forall k \in \mathcal{K}, s \in \mathcal{S} \label{eq:StocDomainCQDelta}\\
&\underline{\lambda}_{kbs}, \overline{\lambda}_{kbs} \geq 0 & & \forall k \in \mathcal{K}, b \in \mathcal{B}_k, s \in \mathcal{S} \label{eq:StocDomainLambda} \\
&\underline{w}_{kbs}, \overline{w}_{kbs} \in \{0,1\}  & & \forall k \in \mathcal{K},  b \in \mathcal{B}_k \setminus\lbrace 0\rbrace, s \in \mathcal{S} \label{eq:StocDomainZW}
\end{align}

\normalsize
The objective function~\eqref{eq:obj} minimizes the expected total duration of the route, defined as the sum of travel times along arcs included in the first-stage route, plus any additional time due to the recourse actions taken in the second stage (i.e., the charging time, the time to reach the detour point, the time to arrive at the chosen CS, and the time to reach the next customer on the route). 

Constraints~\eqref{1_eq:copertura}--\eqref{1_eq:sec} involve only first-stage decision variables. In particular, constraints~\eqref{1_eq:copertura} ensure that each customer is visited once, while constraints~\eqref{1_eq:singlevisit} ensure connectivity. Constraints~\eqref{1_eq:sec} eliminate subtours. Constraints~\eqref{eq:station_single_visit} ensure that every CS copy is visited at most once, while constraints~\eqref{eq:detour_existent_arc} forbid to detour along arcs not included in the first-stage route and ensure that every first-stage arc can be subject to at most one detour. Constraints~\eqref{eq:detour_percentage} and~\eqref{eq:ell_zero} ensure that $z_{ijs} = 0$ and $\ell_{ijks} = 0$ if there is no detour along arc $(i,j)$ with $i,j\in\mathcal{I}^+,i\neq j$, where $M$ is a large number. Constraints~\eqref{eq:y_min_energy} ensure that when the EV traverses arc $(i,j)$ with $i\in\mathcal{I}^+,j\in\mathcal{I},i\neq j$ its SoC at node $j$ is always greater than $Q^{T}$ if no detour operations are performed. Constraints~\eqref{eq:y_max_energy} set the maximum battery level to $Q^{max}$ for each visited station and $0$ for each non-visited station. Constraints~\eqref{eq:battery_cs_level} establish the SoC of a vehicle when it enters station $k\in\mathcal{K}$. The SoC is equal to $0$ if the station is never visited. Constraints~\eqref{eq:y_nodetour1} and~\eqref{eq:y_nodetour2} ensure that, for every first-stage arc $(i,j)$ with $i\in\mathcal{I}^+,j\in\mathcal{I},i\neq j$, the difference between the SoC at nodes $i$ and $j$ is equal to the energy consumption along $(i,j)$, if there is no detour along the arc. Constraints~\eqref{eq:y_nodetour2depot} are the equivalent for the last arc of every route where we allow the EV to return to the depot with an empty battery. Constraints~\eqref{eq:y_detour1} and~\eqref{eq:y_detour2} determine the SoC when returning to customer $j\in\mathcal{I}$ after a detour. Constraints~\eqref{eq:y_detourdepot} are equivalent to the previous ones but hold only for the last arc of each route. Constraints~\eqref{eq:y_detour1}--\eqref{eq:y_detourdepot} use $Q^{max}$ as an upper bound for the SoC. Constraints~\eqref{eq:target1} and~\eqref{eq:target2} enforce the recharge policy for every arc except for the last one of each route. In particular, if a detour occurs when traversing arc $(i,j)$, with $i \in\mathcal{I}^+$ and $j\in\mathcal{I}$, then the SoC at node $j$ must be equal to $Q^G$. Constraints~\eqref{eq:p_detour_point1} and~\eqref{eq:p_detour_point2} fix the detour point, ensuring that every detour starts only when the vehicle reaches battery level $Q^{T}$. Constraints~\eqref{eq:24} update the SoC of the vehicle when leaving a CS. Constraints~\eqref{eq:Qmax_depot} set the battery charge level to $Q^{max}$ at the depot. As in \cite{froger2019improved}, constraints~\eqref{eq:26_InputLessOutput}--\eqref{eq:33_Alpha} and \eqref{eq:34_LinearCombinationInputCharge}--\eqref{eq:40_Alpha} establish the SoC and the charging time of the vehicle when arriving at a CS and leaving from a CS according to the nonlinear charging function of the CS, respectively. Constraints~\eqref{eq:coord_xp_nonlinear} and~\eqref{eq:coord_yp_nonlinear} define the Euclidean coordinates $(X_{ijs},Y_{ijs})$ of the detour point along arc $(i,j)$ with $i,j\in\mathcal{I}^+,i\neq j$ under scenario $s\in\mathcal{S}$. Such coordinates are derived from the Internal Section Formula. Constraints~\eqref{eq:nonlinear_distance} are second-order cone constraints that determine the value of $\ell_{ijks}$. Such constraints can be handled by commercial solvers despite being nonlinear (see~\cite{maggioni2009stochastic}). Finally, constraints~\eqref{1_eq:xbin}--\eqref{eq:StocDomainZW} set the domain of the variables. 

With respect to the formulation proposed in \cite{bezzi2023threshold}, we relax the assumption that the nominal energy consumption of the first-stage routes should not exceed $Q^{max}$. 
Additionally, we allow the SoC of each EV to fall below the threshold $Q^T$ upon returning to the depot in the final arc of the route. This choice enhances the model's flexibility, providing a more realistic representation of EV operations.

\paragraph{Observation}
In the SEVRP-T, the objective is the minimization of the expected total duration of the route, composed of travel times and charging times. Since we are considering a piecewise linear concave charging function, when the energy threshold is attained by the EV, the CSs chosen in the second stage are not necessarily the closest to the detour point. We formalize this observation in the following proposition.

\begin{proposition} \label{prop_distancevstime}
    Let $i\in\mathcal{I}^+$, $j\in\mathcal{I}$ and $k_1,k_2 \in \mathcal{K}$. Suppose that CSs $k_1,k_2$ have the same technology, i.e. $\mathcal{B}_{k_1}=\mathcal{B}_{k_2}=\mathcal{B}=\{0,1,2\}$ and $c_{k_1b}=c_{k_2b}=c_b$, $a_{k_1b}=a_{k_2b}=a_b$ for all $b\in\mathcal{B}$. The corresponding piecewise linear charging function is defined as follows (see Figure \ref{fig:counterexampledetour_new}):
    \begin{equation} 
  SoC(t) = \begin{cases}
    \beta^{1}t & if \text{ } 0\leq t \leq c_1 \\
    \beta^{2}(t-c_2)+Q^{max} & if \text{ } c_1 < t \leq c_2,
  \end{cases} \label{charging_function_prop}
\end{equation}
    with $\beta^{1}=a_1/c_1\geq 1$ and $\beta^{2}=(Q^{max}-a_1)/(c_2-c_1)<\beta^{1}$. Assume that when traversing arc $(i,j)$ under scenario $s\in\mathcal{S}$, the EV reaches a SoC equal to $Q^T$ at point $f$, such that the total distance to complete the path $f-k_1-j$ is shorter than that of path $f-k_2-j$, i.e., $d_{fk_1}+d_{k_1j}<d_{fk_2}+d_{k_2j}$. If the following conditions hold:
\begin{equation} \label{hyp1_prop}
0 \leq Q^T-e_{fks} < a_1 \qquad k\in\{k_1,k_2\}
\end{equation}
\begin{equation} \label{hyp2_prop}
Q^{max} -e_{kjs} \geq Q^G \qquad k\in\{k_1,k_2\}
\end{equation}
\begin{equation} \label{hyp3_prop}
    (t_{fk_1}-t_{fk_2})+\frac{e_{ijs}}{\beta^{1}d_{ij}}(d_{fk_1}-d_{fk_2})+\frac{1}{\beta^{2}}(e_{k_1js}-e_{k_2js})+(t_{k_1j}-t_{k_2j}) >0,
\end{equation}
then recharging at CS $k_1$ takes more time than recharging at CS $k_2$.
\end{proposition}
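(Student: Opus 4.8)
The plan is to prove the statement by a direct computation: first derive a closed-form expression for the time cost of a detour through a candidate CS, then subtract the two expressions. Fix the arc $(i,j)$, the scenario $s$, and the detour point $f$ on $(i,j)$ at which the SoC first drops to $Q^T$. Since the variables $z_{ijs}$ (and hence the location of $f$) do not depend on the CS that is eventually visited, the sub-trip from $i$ to $f$ is identical whether the EV later heads to $k_1$ or to $k_2$, and it cancels when the two options are compared. It therefore suffices to compare, for $k\in\{k_1,k_2\}$, the quantity $\tau_k := t_{fk} + (\overline{c}_{ks}-\underline{c}_{ks}) + t_{kj}$, i.e.\ travel time $f\to k$, plus charging time at $k$, plus travel time $k\to j$; the target is to show that~\eqref{hyp3_prop} is exactly the inequality $\tau_{k_1} > \tau_{k_2}$.

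Next I would pin down the SoC at which the EV enters and leaves $k$. On the leg $f\to k$ the energy used is $e_{fks} = \frac{e_{ijs}}{d_{ij}}\,\ell_{ijks} = \frac{e_{ijs}}{d_{ij}}\,d_{fk}$, consistent with~\eqref{eq:battery_cs_level}, so the EV enters $k$ with SoC $\underline{q}_{ks} = Q^T - e_{fks}$; by the threshold policy the SoC at the next customer $j$ must equal $Q^G$ (constraints~\eqref{eq:target1}--\eqref{eq:target2}) and the leg $k\to j$ consumes $e_{kjs}$, so it leaves $k$ with SoC $\overline{q}_{ks} = Q^G + e_{kjs}$. Hypothesis~\eqref{hyp1_prop} places $\underline{q}_{ks}$ in the first (steep) segment of~\eqref{charging_function_prop}, giving the entry time $\underline{c}_{ks} = \underline{q}_{ks}/\beta^{1} = (Q^T - e_{fks})/\beta^{1}$; hypothesis~\eqref{hyp2_prop} gives $\overline{q}_{ks} = Q^G + e_{kjs} \le Q^{max}$, so, working in the regime where the recharge target falls in the second segment (i.e.\ $Q^G \ge a_1$, as in the construction), the exit time is $\overline{c}_{ks} = c_2 + (\overline{q}_{ks} - Q^{max})/\beta^{2} = c_2 + (Q^G + e_{kjs} - Q^{max})/\beta^{2}$. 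Substituting these into $\tau_k$ gives a fully explicit expression.

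Finally I would subtract: in $\tau_{k_1} - \tau_{k_2}$ the CS-independent constants $c_2$, $(Q^G - Q^{max})/\beta^{2}$ and $Q^T/\beta^{1}$ cancel, leaving $(t_{fk_1}-t_{fk_2}) + (t_{k_1j}-t_{k_2j}) + \frac{1}{\beta^{2}}(e_{k_1js}-e_{k_2js}) + \frac{1}{\beta^{1}}(e_{fk_1s}-e_{fk_2s})$; replacing $e_{fk_1s}-e_{fk_2s}$ by $\frac{e_{ijs}}{d_{ij}}(d_{fk_1}-d_{fk_2})$ turns this into precisely the left-hand side of~\eqref{hyp3_prop}. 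Hence~\eqref{hyp3_prop} is equivalent to $\tau_{k_1}>\tau_{k_2}$, i.e.\ to ``recharging at $k_1$ takes more time than at $k_2$''. A short closing remark would point out that the geometric hypothesis $d_{fk_1}+d_{k_1j}<d_{fk_2}+d_{k_2j}$ is not used in the algebra: it only makes the result noteworthy, since it says the time-cheaper station $k_2$ is the geometrically farther one, a phenomenon made possible by the concavity $\beta^{2}<\beta^{1}$ of the charging curve, which lets the charging-time terms override the travel-time advantage of $k_1$.

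The main difficulty is bookkeeping rather than a genuine idea: one must verify that for each of $k_1,k_2$ both $\underline{q}_{ks}$ and $\overline{q}_{ks}$ lie in the segments claimed — this is exactly why~\eqref{hyp1_prop}, \eqref{hyp2_prop} and $Q^G\ge a_1$ are imposed, and without the condition placing $\overline{q}_{ks}$ in the second segment the $1/\beta^{2}$ terms in~\eqref{hyp3_prop} would not appear in the stated form — and that the symbols $t_{fk}$, $e_{fks}$ and $\ell_{ijks}=d_{fk}$ all refer to the \emph{common} detour point $f$, so that the $i\to f$ portion genuinely drops out of the comparison.
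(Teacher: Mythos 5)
Your proposal is correct and follows essentially the same route as the paper's proof: compute the total detour time through each candidate CS using the two-segment charging function under hypotheses \eqref{hyp1_prop}--\eqref{hyp2_prop}, subtract so that the CS-independent terms cancel, and use the uniform energy consumption assumption $e_{fks}=\tfrac{e_{ijs}}{d_{ij}}d_{fk}$ to recover the left-hand side of \eqref{hyp3_prop}. The only cosmetic differences are that you anchor the exit charging time at $(c_2,Q^{max})$ rather than splitting the charge into a time-to-$a_1$ plus a time-beyond-$a_1$ component (algebraically identical since $\beta^{2}=(Q^{max}-a_1)/(c_2-c_1)$), and you are slightly more explicit than the paper about the implicit requirement that the target SoC $Q^G+e_{kjs}$ lies in the second segment.
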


\begin{figure}[ht!] \centering
\begin{minipage}{0.46\textwidth}
	\begin{center}
		\resizebox{\textwidth}{!}{
			\begin{tikzpicture}
[auto, thick,
  deposito/.style = {rectangle, draw, thin, fill = black, scale = 2},
  stazione/.style = {rectangle, draw, thin, fill = gray!40, scale = 1},
  cliente/.style  = {circle, draw, thin, fill = gray!20, scale = 1},
  detour/.style  = {circle, draw, thin, fill = black, scale = 0.5},
  legenda-stazione/.style = {rectangle, draw, thin, fill = gray!40, scale = 1},       
  legenda-cliente/.style  = {circle, draw, thin, fill = gray!20, scale = 0.75},      
  legenda-detour/.style  = {circle, draw, thin, fill = black, scale = 0.35},
]
\node [cliente] (depot) at (0.5,0)  {$i$};
\node [cliente] (customer) at (7,0)  {$j$};
\node [stazione] (station1) at (4.5,1) {$k_1$}; 
\node [stazione] (station2) at (5.8,-1.2) {$k_2$}; 


\node [detour] (cross) at (4,0) {};
\node at (4,-0.35) {$f$};
\path[-latex] (depot) edge node [below] {$d_{if}$} (cross);
\draw[dotted] (cross) -- (customer);

\path[-latex] [dashed] (cross) edge node [above left] {$d_{fk_1}$} (station1);
\path[-latex] [dashed] (cross) edge node [below] {$d_{fk_2}$} (station2);

\path[-latex] [dashed] (station1) edge node [above] {$d_{k_1j}$} (customer);
\path[-latex] [dashed] (station2) edge node [below right] {$d_{k_2j}$} (customer);

\node (invisible) at (6, -2.8) {};   
\node (invisible) at (8, 0) {};   
\end {tikzpicture}}
	\end{center}
\end{minipage} 
\hfill
\begin{minipage}{0.51\textwidth}
	\begin{center}
		\centering
		\resizebox{\textwidth}{!}{
\begin{tikzpicture}
\draw [-stealth, thick] (-0.3, 0) -- (7.15, 0) node[below]{\footnotesize Time}; 
\draw [-stealth, thick] (0, -0.3) -- (0, 5) node[left]{\footnotesize SoC};   
\node [below left] at (0.0, 0.0) {\footnotesize 0};

\node [below] (c1) at (2, 0.0) {\footnotesize $c_1$};
\node [below] (c3) at (6.4, 0.0) {\footnotesize $c_2$};


\node [left] (a1) at (0.0, 2) {\footnotesize $a_1$};
\node [left] (a3) at (0.0, 4.2) {\footnotesize $a_2=Q^{max}$};



\draw [dotted, gray, line width=.8pt] (c1) -- (2, 2);
\draw [dotted, gray, line width=.8pt] (a1) -- (2, 2);

\draw [dotted, gray, line width=.8pt] (c3) -- (6.4, 4.2);
\draw [dotted, gray, line width=.8pt] (a3) -- (6.4, 4.2);

\draw [black, very thick] (0.0, 0.0) -- (2, 2);
\draw [black, very thick] (2, 2) -- (6.4, 4.2);

\node [left] (betafast) at (1.1, 1.1) {\footnotesize $\beta^1$};
\node [left] (betaslow) at (4.2, 3.2) {\footnotesize $\beta^2$};

\fill [fill = gray, draw = black, thick] (0.0, 0.0) circle (2pt) node {};
\fill [fill = gray, draw = black, thick] (2, 2) circle (2pt) node {};
\fill [fill = gray, draw = black, thick] (6.4, 4.2) circle (2pt) node {};
\end{tikzpicture}}
	\end{center}
\end{minipage} 
	\caption{{Example in which the chosen CS is not the closest one to the detour point $f$. Left panel: graph with relevant nodes and distances. Customers are represented as circles, while CSs are squares. Right panel: charging function of the CSs $k_1,k_2$.}}
	\label{fig:counterexampledetour_new}
\end{figure}
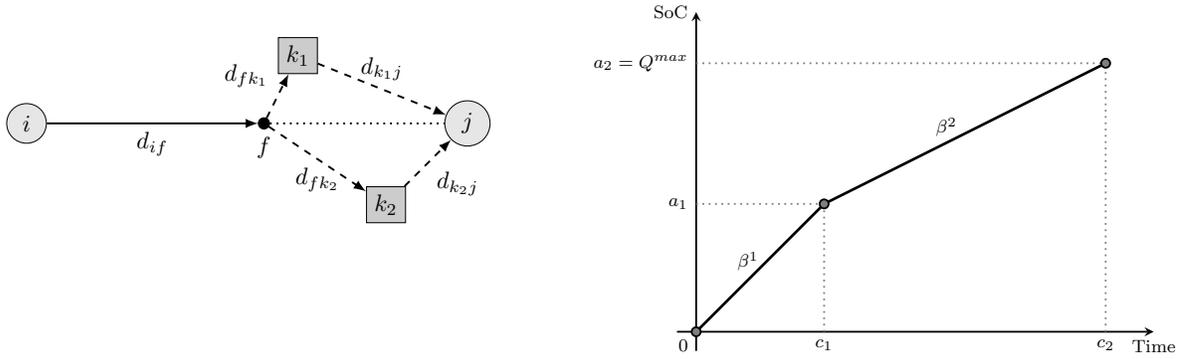

\begin{proof}
    Let $T^k$ be the total detour time, i.e.:
    \begin{equation} \label{total_time_prop}
        T^k=t_{if}+t_{fk}+t_{k}+t_{kj},
    \end{equation}
    where $t_k$ is the total recharging time at CS $k$. By hypothesis \eqref{hyp1_prop}, $t_k$ can be expressed as the sum of two components: $t_k^{1}$, representing the time required to reach a SoC of $a_1$, and $t_k^{2}$, representing the time required to reach a SoC of $Q^G+e_{kjs}$. Note that the charging operations are always feasible under hypotheses \eqref{hyp1_prop}--\eqref{hyp2_prop}. Therefore, using the charging function given in \eqref{charging_function_prop}, we obtain:
    \begin{equation*}
        t_k^{1}=c_1-\frac{1}{\beta^{1}}(Q^T-e_{fks})
    \end{equation*}
        \begin{equation*}
        t_k^{2}=\frac{1}{\beta^{2}}(Q^G+e_{kjs}-a_1),
    \end{equation*}
    with $k\in\{k_1,k_2\}$. Thus, the total time \eqref{total_time_prop} for each CS corresponds to:
    \begin{align*}
        {T^{k_1}} & {= t_{if}+t_{fk_1}+c_1-\frac{1}{\beta^{1}}Q^T+\frac{1}{\beta^{1}}e_{fk_1s}+\frac{1}{\beta^{2}}Q^G+\frac{1}{\beta^{2}}e_{k_1js}-\frac{1}{\beta^{2}}a_1+t_{k_1j}}\\
        {T^{k_2}} & = {t_{if}+t_{fk_2}+c_1-\frac{1}{\beta^{1}}Q^T+\frac{1}{\beta^{1}}e_{fk_2s}+\frac{1}{\beta^{2}}Q^G+\frac{1}{\beta^{2}}e_{k_2js}-\frac{1}{\beta^{2}}a_1+t_{k_2j}.}
    \end{align*}
    Subtracting the two previous expressions, yields:
    \begin{equation} \label{diff_times_prop}
        {T^{k_1}-T^{k_2} = (t_{fk_2}-t_{fk_1})+\frac{1}{\beta^{1}}(e_{fk_1s}-e_{fk_2s})+\frac{1}{\beta^{2}}(e_{k_1js}-e_{k_2js})+(t_{k_1j}-t_{k_2j}).}
    \end{equation}
    According to the model's assumption of uniform energy consumption from the detour point $f$ to each CS and from the CS to the subsequent customer, we have that:
    \begin{equation*}
        {e_{fks}=\frac{e_{ijs}}{d_{ij}}d_{fk} \qquad k \in \{k_1,k_2\}}.
    \end{equation*}
    {Hence, equation \eqref{diff_times_prop} can be reformulated as:}
    \begin{equation*}
        {T^{k_1}-T^{k_2}=(t_{fk_1}-t_{fk_2})+\frac{e_{ijs}}{\beta^{1}d_{ij}}(d_{fk_1}-d_{fk_2})+\frac{1}{\beta^{2}}(e_{k_1js}-e_{k_2js})+(t_{k_1j}-t_{k_2j}),}
    \end{equation*}
    which is strictly positive for hypothesis \eqref{hyp3_prop}.
\end{proof}

Proposition \ref{prop_distancevstime} can be naturally extended to cases where the CSs have different technologies. Furthermore, Proposition \ref{prop_distancevstime} may be generalized to charging functions with more than three breaking points.

\section{Solution strategy} \label{sec:solmethod}
The SEVRP-T is NP-hard since it is an extension of the well-known capacitated vehicle routing problem (see \cite{TothVigo2014}). Given that significant computational effort is required to find optimal solutions for medium- and large-scale EVRP  instances (see \cite{afroditi2014electric}), much of the EVRP literature proposes approximate solution techniques. In this paper, we propose a matheuristic for the SEVRP-T, which is based on an Iterated Local Search procedure with a Set  Partitioning phase (ILS-SP). This technique builds upon the work of~\cite{froger2022electric}, where a similar algorithmic structure showed good performance for deterministic versions of the EVRP.

Our heuristic is composed of two phases: a \textit{route generator} phase and a \textit{route assembly} phase (see Section \ref{sec_ILS}). The route generator phase builds a pool of high-quality routes via an ILS procedure. This phase is performed via two steps: \textit{diversification} and \textit{intensification}. Diversification is implemented as a perturbation procedure (see Section \ref{sec_ILS_perturbation_phase}), while intensification is carried out through a Variable Neighborhood Descent search strategy (VND) (see Section \ref{sec_ILS_VNDphase}). To speed up the evaluation of a given solution, we derive two lower bounds on the expected duration of fixed routes (see Section \ref{sec:fixroute}). Once the pool of routes has been populated, a solution to the SEVRP-T is determined by solving an SP over those routes (see Section \ref{sec_SP}).

The structure of our heuristic is outlined in Algorithm \ref{alg:ILS-P}. Lines 2--12 are related to the route generator phase, while line 13 is the route assembly phase. Firstly, the pool $\Omega$ of routes is empty, and the current best solution $S^*$ is set equal to an input initial solution $S_0$ (line 2). A solution $S$ is comprised by a number of  first-stage routes (i.e., sequences of customer visits). At the same time the number $I$ of iteration is initialized to zero. Next, the algorithm enters its main iterative process (line 3). Excluding the first iteration (lines 4-5), the current best solution $S^*$ is perturbed to move from the local optimum, investigating a different part of the search space (line 7). After such procedure, a new solution $S'$ is obtained and used as input for the intensification phase through the VND algorithm (line 8).  A possibly new solution $S$  is thus identified, and all its new routes are inserted in the pool $\Omega$ (line 9). If the objective function value of $S$ is less than that of the incumbent solution $S^*$, then the incumbent solution is updated to be $S$ (line 11). The main iterative process is repeated until the maximum number of iterations $I_{max}$ is reached. Lastly, a Set Partitioning (SP) problem is solved considering the pool $\Omega$ of generated routes (line 13). In what follows, we detail all algorithmic components of our  heuristic.

\begin{algorithm}[H]
    \caption{ILS-SP}
    \textbf{Input:} An instance of the SEVRP-T with feasible solution $S_0$, a maximum number of iterations $I_{max}$\newline
	\textbf{Output:} A solution for the SEVRP-T\newline
    Let $g(S)$ be the value of the objective function for a solution $S$ and assume $g(\emptyset)=+\infty$.
  \begin{algorithmic}[1]
	\Function{ILS-SP}{}
  \normalsize
		\State $\Omega \gets \emptyset$, $S^* \gets S_0$, $I \gets 0$
		\While{$I < I_{max}$}
			\If{$I = 0$}
				\State $S' \gets S_0$
			\Else
				\State $S' \gets \textsc{perturb}(S^*)$  \Comment{\footnotesize See Section \ref{sec_ILS_perturbation_phase}} \normalsize
			\EndIf
			\State $S \gets \textsc{vnd}(S')$ \Comment{\footnotesize See Section \ref{sec_ILS_VNDphase}} \normalsize
			\State Insert all the new routes of $S$ into $\Omega$
			\If{$g(S) < g(S^*)$}
				\State $S^* \gets S$
			\EndIf
			\State $I \gets I + 1$
		\EndWhile
        \State Solve a Set Partitioning problem on $\Omega$ \Comment{\footnotesize See Section \ref{sec_SP}} \normalsize
	\EndFunction
    \end{algorithmic}
    \label{alg:ILS-P}
\end{algorithm}

\subsection{Route generator and route assembler} \label{sec_ILS}
We use an ILS algorithm (see \cite{LouMarStu2003}) to populate the pool of routes used to assemble a solution for the SEVRP-T. An ILS algorithm escapes from a local optima by perturbing the incumbent solution. We find the local optima via a VND procedure. Further details are provided in the following sections (see Sections \ref{sec_ILS_perturbation_phase}-\ref{sec_ILS_VNDphase}). Finally, in Section \ref{sec:fixroute}, we introduce an exact algorithm for the fixed route vehicle charging problem, which is a subproblem in the VND phase.

\subsubsection{Perturbation phase} \label{sec_ILS_perturbation_phase}
In Algorithm \ref{alg:ILS-P} at line 7, whenever a local optimal solution $S^*$ is reached, it is perturbed. The perturbation phase, also known as \textit{shaking}, is  similar to the one used by \cite{froger2022electric},  which is performed as follows. Firstly, a random customer $i \in \mathcal{I}$ is selected. Then, the $\kappa$ geographically closest customers to $i$ are removed from their respective routes, with $\kappa$ a random integer in the interval $[\min\{|\mathcal{I}|,5\},\max\{\min\{|\mathcal{I}|,5\}$, \\$\lceil \sqrt{|\mathcal{I}|}\rceil\}]$. Finally, the removed customers are reinserted at different positions in the solution, one at a time and in a random order, according to the following rules:
\begin{itemize}
\item A customer cannot be placed back into the same route from which it was removed;
\item A customer is reinserted in the position yielding the minimum increase in the expected time with respect to the base solution, i.e., the one without the removed customers. This is carried out by evaluating the increase in the expected time of every feasible insertion of the customer in every other route, reoptimizing the charging decisions through Algorithm~\ref{alg:E-SFRVCP-T} (see Section \ref{sec:fixroute});
\item If no feasible insertion is possible, a new route with the removed customer is set up.
\end{itemize}

\subsubsection{Variable Neighborhood Descent search phase} \label{sec_ILS_VNDphase}
In the intensification phase of the ILS-SP algorithm (see line 8 in Algorithm \ref{alg:ILS-P}) we apply a VND procedure (see \cite{MlaHan1997} and \cite{golden2008vehicle}). This technique explores an ordered set of neighborhoods $\mathcal{D}$, with respect to the current solution $S$. As soon as an improved solution is found, the algorithm stops searching in the current neighborhood and restarts the search with the first operator of $\mathcal{D}$. Otherwise, the VND moves to the next neighborhood in $\mathcal{D}$. A local optimum is reached when the last operator fails to improve the solution $S$. Let $r_1$ and $r_2$ represent any two routes in the solution $S$. The neighborhoods in  $\mathcal{D}$ are generated based on the following operators: 

\begin{enumerate}
\item \textit{1-0 vertex exchange}: one customer is moved from route $r_1$ to route $r_2$;
\item \textit{1-1 vertex exchange}: interchange between customer $i$ from route $r_1$ and customer $j$ from route $r_2$;
\item \textit{2-0 vertex exchange}: two consecutive customers $i$ and $j$ are moved from route $r_1$ to route $r_2$. The opposite arc $(j,i)$ is also considered;
\item \textit{2-1 vertex exchange}: interchange between consecutive customers $i$ and $j$ from route $r_1$ and customer $\tilde{i}$ from route $r_2$. The opposite arc $(j,i)$ is also considered as well;
\item \textit{2-2 vertex exchange}: interchange between consecutive customers $i$ and $j$ from route $r_1$ and two other adjacent customers $\tilde{i}$ and $\tilde{j}$ from route $r_2$. The opposite arcs $(j,i)$ and $(\tilde{j},\tilde{i})$ are also considered, yielding four possible combinations;
\item \textit{2-opt}: two nonconsecutive edges from routes $r_1$ and $r_2$ are removed, and two new edges are formed to make a new route;
\item \textit{Separate:} (introduced by~\cite{froger2022electric}) this operator splits a single route into two routes by adding a return to the depot after a customer visit.
\end{enumerate}

In our implementation we use intra-route and inter-route versions of the first five operators of the list, resulting in a total of 12 different moves. The structure of the VND procedure is outlined in Algorithm \ref{alg:VND}. For every neighborhood and for every move, we apply a couple of filtering strategies to prune unpromising moves (lines 8-9). Details of these strategies are discussed in the next section.

\begin{algorithm}[H]
    \caption{VND($S$)}
    		\textbf{Input:} An instance of the SEVRP-T with a solution $S$\newline
		\textbf{Output:} A new solution $S^*$ of the SEVRP-T\newline
        For $d\in\mathcal{D}$, let $R_d$ be the set of first-stage routes $r$ impacted by move $d$, with corresponding total first-stage travel time $t_{R_d}$ and $R'_d$ the newly created routes after applying move $d$ to $R_d$.
    \begin{algorithmic}[1]
	\Function{VND}{}
  		\State $S^* \gets S, d \gets 0$
        \State $\mathcal{D} \gets$ list of neighborhoods
        \State $\delta^* \gets +\infty$            \Comment{\footnotesize Best improvement so far} \normalsize
		\While{$d < |\mathcal{D}|$} 
            \State $\delta \gets 0$
            \For{every move in neighborhood $d$}
            \normalsize
                \If{$\displaystyle \sum_{r \in R'_d}\sum_{(i,j) \in r}t_{ij} < \gamma t_{R_d}$}
                \Comment{\footnotesize If first-stage travel times are promising. See Section \ref{sec:fixroute}}
                \normalsize
                    \If{$\displaystyle \sum_{r \in R'_d} \Call{LB-SFRVCP-T}{r, \overline{s}} < t_{R_d}$}  \Comment{\footnotesize See Section \ref{sec:fixroute}} \normalsize
                        \State $\delta \gets t_{R_d} - \displaystyle\sum_{r \in R'_d}\Call{E-SFRVCP-T}{r}$ \Comment{\footnotesize See Section \ref{sec:fixroute}} \normalsize
                        \State $R^* \gets R'_d$
                        
                    \EndIf
                \EndIf

                \If{$\delta < \delta^*$}
                    \State{$\delta^* \gets \delta$}
                \EndIf
            \EndFor
            \If{$\delta^* < 0$}
                \State $S^* \gets S^* \cup R^* \setminus R_d$
            \Else
                \State $d \gets d+1$
            \EndIf
		\EndWhile
  		\State \Return $S^*$
	\EndFunction
    \end{algorithmic}
    \label{alg:VND}
\end{algorithm}

\subsubsection{The stochastic fixed route vehicle charging problem with a threshold policy} \label{sec:fixroute}

In the execution of Algorithm \ref{alg:VND}, the evaluation of the expected duration of a first-stage route $r$  is a subproblem arising several times throughout our algorithm. We call this problem the Stochastic Fixed Route Vehicle Charging Problem with a Threshold policy (SFRVCP-T). The objective of a fixed route vehicle charging problem (see \cite{froger2019improved}) is to identify optimal charging operations, related to which CSs to insert in the given route (comprised of a sequence of customer visits) and the amount of energy to recharge at them, while minimizing the total duration of the route. In the special case of the SFRVCP-T, the CSs to visit for each scenario $s\in\mathcal{S}$ for each route must be identified.

One possibility to solve the SFRVCP-T is to derive a second-order cone mixed-integer programming model from the one presented in Section \ref{sec:problem}. Specifically, the first-stage variables $x_{ij}$ are set to 1 if arc $(i,j)$ belongs to $r$ and $0$ otherwise. As a consequence, only the second-stage variables should be determined.  However, this approach is predictably slow from a computational perspective, as demonstrated empirically in Section \ref{sec_performance_heuristic_STOCH}. For this reason, we propose an exact algorithm called E-SFRVCP-T$(r)$ (see Algorithm \ref{alg:E-SFRVCP-T}).

Algorithm \ref{alg:E-SFRVCP-T} operates as follows. First, we initialize to zero the duration $t_r$ of the route $r$ (line 2). Then, the algorithm enters in two iterative processes, one over the set of scenarios and one over the arcs of the route. If no detour is needed under scenario $s\in\mathcal{S}$ when traversing arc $(i,j)\in r$ (line 6), then no recharging operations are needed. This happens when the EV has enough energy to completely traverse the arc with a SoC that is greater than $Q^{T}$. In this case, the EV's SoC is updated, as well as the duration of the route (lines 7--8). Otherwise, the best CS yielding the lowest travel detour and recharging time is chosen (lines 10--19). The selection is performed on a precomputed set $\mathcal{C}_{ij}$ of non-dominated CSs. The construction of set $\mathcal{C}_{ij}$ is outlined in Algorithm \ref{alg:NDCS} in \ref{sec_appendix_nondominatedCS_algo}. We denote by $\psi^{-1}_k(q_i,q_f)$ the inverse charging function of CS $k\in\mathcal{C}_{ij}$, where $q_i$ and $q_f$ are the initial and final SoC of the vehicle, respectively. In line 16, if no feasible CSs exist, i.e., the EV runs out of battery before reaching any CSs from the detour point or the amount to charge is greater than $Q^{max}$, the route $r$ is infeasible (lines 20-21). The total duration $\tilde{t}_{rs}$ under scenario $s$ is updated (line 23), as well as the expected recourse duration $t_r$ of route $r$, weighted by the probability $p_s$ of scenario $s\in\mathcal{S}$ (line 25).

\begin{algorithm}[H]
    \caption{E-SFRVCP-T($r$)}
		\textbf{Input:} A fixed route $r$\newline
		\textbf{Output:} Expected recourse duration $t_{r}$ for route $r$
      \begin{algorithmic}[1]
  \Function{\exact}{}
        \State{$t_{r} \gets 0$}
        \For{every scenario $s \in \mathcal{S}$}
            \State{$\tilde{t}_{rs} \gets 0, SoC \gets Q^{max}$}
            \For{every arc $(i,j) \in r$}
		          \If{$SoC - e_{ijs} > Q^{T}$} \Comment{\footnotesize No detour in arc $(i,j)$} \normalsize
				    \State{$SoC \gets SoC - e_{ijs}$}
				    \State{$\tilde{t}_{rs} \gets \tilde{t}_{rs} + t_{ij}$}
		          \Else 
                    \State{$t_r^* \gets \infty$}
            \normalsize
                    \For{every CS $k \in \mathcal{C}_{ij}$}
                        \State{$z_{ijs} \gets (SoC - Q^{T}) / e_{ijs} $}       \Comment{\footnotesize Percentage of arc $(i,j)$ covered until detour} \normalsize
                        \State{$X_{ijs} \gets X_j z_{ijs} + X_i(1 - z_{ijs})$}     \Comment{\footnotesize X-coordinate of the detour point} \normalsize
                        \State{$Y_{ijs} \gets Y_j z_{ijs} + Y_i(1 - z_{ijs})$}     \Comment{\footnotesize Y-coordinate of the detour point} \normalsize
                            
                        \State{$\ell_{ijks} \gets \sqrt{(X_{ijs} - X_k)^2 + (Y_{ijs} - Y_k)^2}$} \Comment{\footnotesize Distance detour point--CS} \normalsize
                    
                        \If{$Q^{T} - \frac{e_{ijs}}{d_{ij}} \ell_{ijks} \geq 0$ and $Q^{G} + e_{kjs} \leq Q^{max}$}   
                        \normalsize
                            \State{$t_r \gets t_{ij} z_{ijs} + \frac{t_{ij}}{d_{ij}} \ell_{ijks} + \psi^{-1}_k(Q^{T} - \frac{e_{ijs}}{d_{ij}} \ell_{ijks}, Q^{G} + e_{kjs}) +  t_{kj} - t_{ij}$}  
                            \If{$t_r < t_r^*$}
                                \State{$t_r^* \gets t_r$}
                            \EndIf
                        \EndIf
                    \EndFor
                    \If{$t_r^* = \infty$}
                        \State {\textbf{Return} $\infty$}      \Comment{\footnotesize Route $r$ is infeasible} \normalsize
                    \Else
                        \State{$\tilde{t}_{rs} \gets \tilde{t}_{rs} + t_r^*$}
                    \EndIf
				    \State{$SoC \gets Q^{G}$}
	            \EndIf
            \EndFor
            \State{$t_{r} \gets t_{r} + p_s \tilde{t}_{rs}$}
        \EndFor
        \State \Return $t_{r}$
	\EndFunction
    \end{algorithmic}
    \label{alg:E-SFRVCP-T}
\end{algorithm}

For a given route composed by $l$ arcs from an instance with $|\mathcal{C}|$ charging stations, Algorithm \ref{alg:E-SFRVCP-T} has a worst-case complexity equal to $\mathcal{O}(l|\mathcal{C}|)$. Indeed, the travelling time of the arcs without detours is evaluated in constant time, but every detour requires a loop over all the possible CSs.

Given the complexity of E-SFRVCP-T($r$), coupled with the with need to frequently call this algorithm within the VND, we propose two lower bounds that are used within the heuristic to accelerate it. The two lower bounds operate on a given first-stage route $r$ and are efficiently computed. We use these bounds in sequence to filter unpromising solutions. Let $R_d$ be the set of first-stage routes impacted by move $d\in\mathcal{D}$ with total duration $t_{R_d}$, and $R'_d$ the newly created routes after applying move $d$ to $R_d$.

The first lower bound is related only to the duration of the first-stage route, without considering any recharging operations (see line 8 in Algorithm \ref{alg:VND}). Specifically, if the total first-stage travelled time of the routes in $R'_d$ exceeds the total duration $t_{R_d}$ of the current set of routes $R_d$ multiplied by a factor $\gamma \geq 1$, then the corresponding move is discarded. Parameter $\gamma$ avoids evaluating excessively lengthily routes. If the condition holds, the algorithm proceeds to the second bound.

The second lower bound accounts for both the technology of the CSs and the duration of the detour. We refer to it as the Lower Bound for SFRVCP-T (LB-SFRVCP-T${(r,\mathcal{S}})$). Its computation is performed in Algorithm \ref{alg:LB-SFRVCP-T}, relying on a simplified version of Algorithm \ref{alg:E-SFRVCP-T}. Let $r$ be a given first-stage route. If no detour is needed under scenario $s\in\mathcal{S}$ while traversing arc $(i,j)\in r$ (line 6), recharging operations are unnecessary, and the SoC and the route duration are simply updated (lines 7--8). Conversely, when a detour is required, the total duration of the route is updated to include both travelling and recharging times (line 10). For the total duration of the detour, we use the lower bound \eqref{eq:lowerboundlengthdetour} derived in Proposition \ref{prop:lowerbound}. Further details on its computation are provided below. Regarding recharging times, the SoC of the EV is assumed to be $Q^T$ upon arrival at the CS and sufficient to reach customer $j$ with a SoC of $Q^G$ upon departure. The total recharging time is then calculated by converting the required charge amount into time using $a^{fast}_j$, i.e., the fastest charge rate of the CS with the fastest technology close to node $j$. All these assumptions ensure that the computation remains consistent with the goal of determining a lower bound.

{Algorithm \ref{alg:LB-SFRVCP-T} is designed to compute the lower bound for all possible scenarios in the scenario tree $\mathcal{S}$. However, to further speed up the procedure, in line 9 of Algorithm \ref{alg:VND} we compute such lower bound only for the average scenario $\overline{s}$.}

\begin{algorithm}[H]
    \caption{LB-SFRVCP-T($r,\mathcal{S}$)}
    \textbf{Input:} A fixed route $r$, a set of scenarios $\mathcal{S}$\newline
    \textbf{Output:} A lower bound on the expected recourse duration $t_{r}$ for route $r$
    \begin{algorithmic}[1]
	\Function{\lb}{} 
        \State{$t_{r} \gets 0$}
        \For{every scenario $s \in \mathcal{S}$}
            \State{$\tilde{t}_{rs} \gets 0, SoC \gets Q^{max}$}
            \For{every arc $(i,j) \in r$}
		          \If{$SoC - e_{ijs} > Q^{T}$} \Comment{\footnotesize No detour in arc $(i,j)$} \normalsize
				    \State{$SoC \gets SoC - e_{ijs}$}
				    \State{$\tilde{t}_{rs} \gets \tilde{t}_{rs} + t_{ij}$}
		          \Else 
                        \normalsize
				    \State{$\tilde{t}_{rs} \gets \tilde{t}_{rs} + \frac{t_{ij}}{d_{ij}} \max\{d_{ij}, \underline{d}_i + \underline{d}_j\} + a^{fast}_j (Q^{G} +\frac{e_{ijs}}{d_{ij}} \underline{d}_j - Q^{T})$}
				    \State{$SoC \gets Q^{G}$}
	            \EndIf
            \EndFor
            \State{$t_{r} \gets t_{r} + p_s \tilde{t}_{rs}$}
        \EndFor
        \State \Return $t_{r}$
	\EndFunction
    \end{algorithmic}
    \label{alg:LB-SFRVCP-T}
\end{algorithm}

\newpage
The correctness of the lower bound in line 10 of Algorithm \ref{alg:LB-SFRVCP-T} is proven in Proposition \ref{prop:lowerbound}. 

\begin{proposition} \label{prop:lowerbound} 
Let $(i,j)$ be an arc with $i,j \in \mathcal{I}$ and $\underline{k}^i := \arg\min_{k \in \mathcal{K}}{d_{ik}}$ be the CS yielding the minimum distance $\underline{d}_i$ from customer $i \in \mathcal{I}$. If the EV reaches the threshold $Q^{T}$ under scenario $s\in\mathcal{S}$ while traversing arc $(i,j)$, then the quantity:
\begin{equation} \label{eq:lowerboundlengthdetour}
    \max\{d_{ij}, \underline{d}_i + \underline{d}_j\}
\end{equation}
is a lower bound to the total length of any detour along arc $(i,j)$.
\end{proposition}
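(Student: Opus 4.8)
The plan is to write the length of an arbitrary detour along arc $(i,j)$ as the sum of its three geometric legs and then bound this sum below by each of the two quantities inside the maximum, each time via a single application of the triangle inequality. Let $f$ denote the detour point reached on arc $(i,j)$ under scenario $s$, and let $k\in\mathcal{K}$ be any charging station the EV may head to. The detour replaces the direct traversal $i\to j$ by the path $i\to f\to k\to j$, whose total length is $L:=d_{if}+d_{fk}+d_{kj}$, where $d_{if}$ is the portion of the arc actually covered before detouring.

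The first step I would carry out is to record that, by constraints~\eqref{eq:coord_xp_nonlinear}--\eqref{eq:coord_yp_nonlinear} (the Internal Section Formula), the detour point $f$ lies on the straight segment joining $i$ and $j$, so that $d_{if}=z_{ijs}d_{ij}$, $d_{fj}=(1-z_{ijs})d_{ij}$, and hence $d_{if}+d_{fj}=d_{ij}$ \emph{exactly}. For the first bound, apply the triangle inequality to the triple $(f,k,j)$ to get $d_{fk}+d_{kj}\ge d_{fj}$; adding $d_{if}$ to both sides yields $L\ge d_{if}+d_{fj}=d_{ij}$. For the second bound, apply the triangle inequality to the triple $(i,f,k)$ to get $d_{if}+d_{fk}\ge d_{ik}$, and note that $d_{ik}\ge\min_{k'\in\mathcal{K}}d_{ik'}=d_{i\underline{k}^i}=\underline{d}_i$; likewise $d_{kj}\ge\min_{k'\in\mathcal{K}}d_{jk'}=\underline{d}_j$. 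Summing the last two inequalities gives $L\ge\underline{d}_i+\underline{d}_j$. Since both inequalities hold for every admissible detour point $f$ and every choice of CS $k$, we conclude $L\ge\max\{d_{ij},\underline{d}_i+\underline{d}_j\}$, which is exactly~\eqref{eq:lowerboundlengthdetour}.

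I do not expect a substantial obstacle: the argument is elementary. The two points that require a little care are (i) justifying that $d_{if}+d_{fj}=d_{ij}$ holds with equality rather than merely ``$\ge$'' (this is where the collinearity of $f$ from the section formula is essential), and (ii) observing that the minima defining $\underline{d}_i$ and $\underline{d}_j$ are taken independently over all of $\mathcal{K}$, so the estimate is valid for \emph{whichever} CS the recourse policy actually selects and does not presuppose that it is the one closest to $i$ or to $j$. It may also be worth remarking that the $\max$ with $d_{ij}$ is genuinely needed, since $\underline{d}_i+\underline{d}_j$ alone can be strictly smaller than $d_{ij}$ when the CSs lie far from the segment $(i,j)$, and that the bound is attained when $i$, its nearest CS $\underline{k}^i$, and $j$ happen to be collinear in that order (so that $\underline{k}^j=\underline{k}^i$ lies on the segment and the detour degenerates to the straight path).
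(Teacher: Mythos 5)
Your proof is correct and follows essentially the same route as the paper's: the bound $d_{ij}$ comes from the detour being a path from $i$ to $j$ (which you make explicit via collinearity of $f$ and the triangle inequality on $(f,k,j)$), and the bound $\underline{d}_i+\underline{d}_j$ comes from the triangle inequality on $(i,f,k)$ together with the independent minimality of $\underline{d}_i$ and $\underline{d}_j$ over all CSs. Your write-up is somewhat more explicit than the paper's (which simply asserts the first bound as trivial), but the decomposition into the three legs and the two triangle-inequality applications are identical.
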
 

\begin{proof} As distances are Euclidean, $d_{ij}$ is by definition the length of the shortest path from $i$ to $j$. It trivially follows that $d_{ij}$ is a lower bound to the total length of any detour along $(i,j)$; it may be a tight bound if the selected CS is on the arc $(i,j)$.

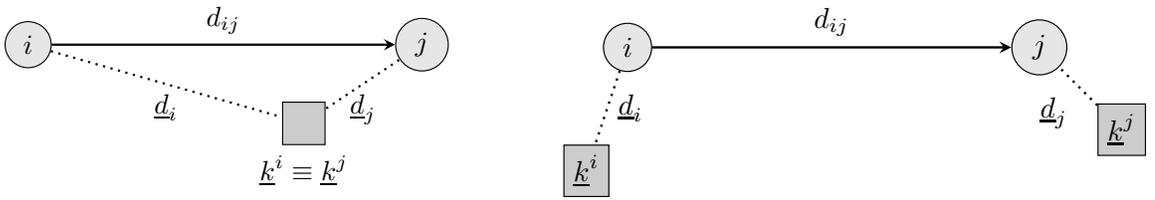
\begin{figure}[ht!]
  \centering
\begin{minipage}{0.44\textwidth}
	\begin{center}
		\resizebox{0.87\textwidth}{!}{
			\begin{tikzpicture}
[auto, thick,
  stazione/.style = {rectangle, draw, thin, fill = gray!40, scale = 2.3},
  cliente/.style  = {circle, draw, thin, fill = gray!20, scale = 1},
  detour/.style  = {circle, draw, thin, fill = black, scale = 0.5},
  legenda-stazione/.style = {rectangle, draw, thin, fill = gray!40, scale = 1},        
  legenda-cliente/.style  = {circle, draw, thin, fill = gray!20, scale = 0.75},         
  legenda-detour/.style  = {circle, draw, thin, fill = black, scale = 0.35},
]
\node [cliente] (customer-i) at (0,0)  {$i$};
\node [cliente] (customer-j) at (5,0)  {$j$};
\node [stazione] (station) at (3.5,-1) {};

\node at (3.5,-1.6) {$\underline{k}^i\equiv \underline{k}^j$};

\coordinate (intersection) at (3.5, 0);

\path [-stealth] (customer-i) edge node [above] {$d_{ij}$} (customer-j);

\path [dotted] (customer-i) edge node [below] {$\underline{d}_i$} (station);
\path [dotted] (station) edge node [below] {$\underline{d}_j$} (customer-j);

\end {tikzpicture}}

	\end{center}
\end{minipage} 
\hfill
\begin{minipage}{0.5\textwidth}
	\begin{center}
		\centering
		\resizebox{\textwidth}{!}{
\begin{tikzpicture}
[auto, thick,
  stazione/.style = {rectangle, draw, thin, fill = gray!40, scale = 1},
  cliente/.style  = {circle, draw, thin, fill = gray!20, scale = 1},
  detour/.style  = {circle, draw, thin, fill = black, scale = 0.5},
  legenda-stazione/.style = {rectangle, draw, thin, fill = gray!40, scale = 1},        
  legenda-cliente/.style  = {circle, draw, thin, fill = gray!20, scale = 0.75},         
  legenda-detour/.style  = {circle, draw, thin, fill = black, scale = 0.35},
]
\node [cliente] (customer-i) at (0,0)  {$i$};
\node [cliente] (customer-j) at (5,0)  {$j$};
\node [stazione] (station2) at (-0.5,-1.5) {$\underline{k}^i$}; 
\node [stazione] (station3) at (6,-1) {$\underline{k}^j$};

\coordinate (intersection) at (3.5, 0);

\path [-stealth] (customer-i) edge node [above] {$d_{ij}$} (customer-j);

\path [dotted] (customer-i) edge node [right] {$\underline{d}_i$} (station2);
\path [dotted] (station3) edge node [midway] {$\underline{d}_j$} (customer-j);
\end {tikzpicture}}
	\end{center}
\end{minipage} 
\caption{Relevant distances between nodes and CSs in the computation of the lower bound \eqref{eq:lowerboundlengthdetour}. Left panel: the case with $\underline{k}^i \equiv \underline{k}^j$. Right panel: the case with $\underline{k}^i \not\equiv \underline{k}^j$.}   \label{fig:LB}
\end{figure}

We first prove that $\underline{d}_i+ \underline{d}_j$ is also a valid lower bound. Recall that any detour from customer $i$ to customer $j$ must follow this order:
\begin{enumerate}
\item travel from node $i$ to the detour point;
\item travel from the detour point to a CS $k$;
\item travel from CS $k$ to node $j$.
\end{enumerate}
Since the detour must end in $j$ and we assume $\underline{k}^j$ to be the nearest CS to $j$, $\underline{d}_j$ is a lower bound on the length of the path from the chosen CS $k$ to $j$ (point 3). The same is not true for node $i$, since the vehicle will not travel directly from $i$ to the CS $k$, but the triangle inequality implies that $\underline{d}_i$ is a valid lower bound on the total distance covered from $i$ to the CS $k$ (points 1 and 2). This lower bound may be tight if $\underline{k}^i \equiv \underline{k}^j$ (see Figure \ref{fig:LB} left panel) and the detour happens at node $i$. Note that $\underline{d}_i + \underline{d}_j$ could be less than $d_{ij}$ if $\underline{k}^i$ and $\underline{k}^j$ are different (see Figure \ref{fig:LB} right panel), hence we use $\max\{d_{ij}, \underline{d}_i + \underline{d}_j\}$ as a stronger lower bound.
\end{proof}

\subsubsection{Set Partitioning} \label{sec_SP}
Once the pool of feasible routes $\Omega$ has been populated by the ILS procedure, the final solution of the SEVRP-T is obtained through the second component of the heuristic. As in \cite{montoya2017electric} and \cite{AndBar2019}, this second component consists of  solving a Set Partitioning problem considering the set of routes in  $\Omega$.

Let $x_r$ be a binary variable equal to 1 if route $r\in\Omega$ is selected, 0 otherwise. The expected recourse duration of route $r\in \Omega$ is denoted as $t_r$, while $\alpha_{ir}$ is a binary parameter equal to 1 if route $r\in\Omega$ visits customer $i\in\mathcal{I}$, 0 otherwise. The corresponding Set Partitioning problem on the set $\Omega$ is formulated as follows: 
\begin{eqnarray}
\text{minimize}  & & \sum_{r \in \Omega} t_r x_r   \label{obj_setcovering}  \\
\text{subject to:} \notag\\
& & \sum_{r \in \Omega} \alpha_{ir} x_r = 1  \qquad \forall i \in \mathcal{I}  \label{covering_setcovering}  \\
& & x_r \in \{0,1\} \qquad \forall r \in \Omega. \label{domain_setcovering}
\end{eqnarray} 

Objective~\eqref{obj_setcovering} selects a subset of routes from $\Omega$ that minimizes the total duration. Constraints~\eqref{covering_setcovering} ensure that each customer $i \in \mathcal{I}$ is visited exactly once. Finally, constraints~\eqref{domain_setcovering} set the domains of the decision variables $x_r$, with $r \in \Omega$. 

\section{A scenario reduction technique for the SEVRP-T} \label{sec_scen_red_theory}

The application of the ILS-SP heuristic outlined in Section \ref{sec:solmethod} heavily depends on the size of $\mathcal{S}$ (see line 3 in Algorithm \ref{alg:E-SFRVCP-T}). The larger $|\mathcal{S}|$ is, the greater the informative power of the scenario tree is, enabling to capture a broader range of energy consumption outcomes. However, this comes at the cost of increased computational effort. For this reason, in this section, we present the technique we have adopted for reducing the number of scenarios in the case of medium- and large-sized SEVRP-T instances. The scenario reduction technique is mainly adapted from \cite{DupGro-KusRom2003} and \cite{HeiRom2003}. Let $P$ be the original distribution defined over a finite set $\mathcal{S}$ of energy consumption scenarios. The technique determines a subset of scenarios $S^{red}\subset \mathcal{S}$ and the corresponding probability distribution $P^{red}$ with $|S^{red}|$ as an input. The set $S^{red}$ and the probability $P^{red}$ are determined with the objective of minimizing the probabilistic distance between $P$ and $P^{red}$, which is established by solving an appropriately defined optimal transportation problem. In what follows we detail the technique.

 Let $e_s:=[e_{ijs}]_{(i,j)\in \mathcal{A}}$ represent the vector of all realizations of the energy consumption across the entire network in scenario $s\in\mathcal{S}$. The initial probability distribution $P$ on $\mathcal{S}$ is written as $P:=\sum_{s \in \mathcal{S}}p_s \delta_{e_s}$, where $p_s$ is the probability of scenario $s\in\mathcal{S}$ and $\delta_{e_s}$ denotes the Dirac distribution at $e_s$. Given a subset $\mathcal{S}^{red}\subset \mathcal{S}$ of fixed cardinality $|\mathcal{S}^{red}|<|\mathcal{S}|$, we define $\mathcal{S}^{del}:=\mathcal{S}\setminus\mathcal{S}^{red}$. Specifically, $\mathcal{S}^{red}$ and $\mathcal{S}^{del}$ are the sets of the energy consumption scenarios which are preserved and deleted, respectively, after the application of the scenario reduction technique. In this regard, we construct the probability distribution $P^{red}$ based on scenarios $e_s$ with probabilities $p_{s}^{red}\in[0,1]$ for $s\in\mathcal{S}^{red}$, i.e., $P^{red}:=\sum_{s\in\mathcal{S}^{red}}p_s^{red}\delta_{e_s}$, with $\sum_{s\in\mathcal{S}^{red}} p_s^{red} = 1$. This is achieved by deleting all scenarios $s\in\mathcal{S}^{del}$ and by assigning new probabilities $p_s^{red}$ to each preserved scenario with $s\in\mathcal{S}^{red}$. For simplicity, let $p^{red}:=[p_s^{red}]_{s \in \mathcal{S}^{red}}$ be the vector of reassigned probabilities. Therefore, the objective of the scenario reduction technique is to optimally choose a subset $\mathcal{S}^{red}$ of energy consumption scenarios and a suitable probability $p^{red}$ from the feasible set $\{ \mathcal{S}^{red}\subset \mathcal{S} \text{ with } |\mathcal{S}^{red}| \text{ fixed}, p^{red}\geq 0, \sum_{s \in \mathcal{S}^{red}} p_s^{red}=1\}$ so that the distance $D(P,P^{red})$ between the probability distributions $P$ and $P^{red}$ is minimized. This is performed by solving the following discrete version of the \textit{optimal transportation problem} (see \cite{DupGro-KusRom2003}):
\begin{eqnarray*}
    \text{minimize}  & & D(P,P^{red})=\sum_{s\in\mathcal{S}} \sum_{\tilde{s} \in \mathcal{S}^{red}} \norm{e_s-e_{\tilde{s}}}_2 \eta_{s\tilde{s}}        \\
\text{subject to:}\\
& & \sum_{s \in \mathcal{S}}\eta_{s\tilde{s}}=p_{\tilde{s}}^{red}  \qquad \forall \tilde{s} \in \mathcal{S}^{red}    \\
& & \sum_{\tilde{s} \in \mathcal{S}^{red}}\eta_{s\tilde{s}}=p_s  \qquad  \forall s \in \mathcal{S}    \\
& & \eta_{s\tilde{s}} \geq 0,
\end{eqnarray*}

where $\eta_{s\tilde{s}}$ is the probability of assigning scenario $s\in\mathcal{S}$ to scenario $\tilde{s}\in\mathcal{S}^{red}$.
If $\mathcal{S}^{red}$ is predefined, the problem of determining optimal probabilities $p_s^{red}$ can be easily handled, as established in the following theorem (see Theorem 2.1 in \cite{HeiRom2003} or Theorem 2 in \cite{DupGro-KusRom2003}):
\begin{theorem}[\cite{HeiRom2003} and \cite{DupGro-KusRom2003}]
Let $\mathcal{S}^{red}$ be given. The minimum distance $D(P,P^{red})$ over the set $\{p^{red}\geq 0, \sum_{s \in \mathcal{S}^{red}} p_s^{red}=1\}$ is equal to:
\begin{equation}\label{mindist}
D(P,P^{red})=\sum_{\tilde{s} \notin \mathcal{S}^{red}}p_{\tilde{s}} \cdot \min_{s \in \mathcal{S}^{red}} \norm{e_s-e_{\tilde{s}}}_2.
\end{equation}
Additionally, the minimum is achieved at:
\begin{equation} \label{optimal_redistribution_rule}
p_s^{red}=p_s+\sum_{\tilde{s} \in \mathcal{S}_s^{del}}p_{\tilde{s}} \qquad \forall s \in \mathcal{S}^{red},
\end{equation}
where $\mathcal{S}_s^{del}:=\{\tilde{s} \in \mathcal{S}^{del}: s=s(\tilde{s})\}$ and $s(\tilde{s})\in \argmin_{s\in \mathcal{S}^{red}} \norm{e_s-e_{\tilde{s}}}_2$ for all $\tilde{s}\in\mathcal{S}^{del}$.
\end{theorem}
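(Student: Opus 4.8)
The plan is to read the displayed problem as a discrete optimal transportation (Monge--Kantorovich) problem in which the target marginal $p^{red}$ is itself a decision variable ranging over the simplex, and to exploit the fact that the transportation cost $\norm{e_s-e_{\tilde s}}_2$ is a genuine metric, so it vanishes on the diagonal. First I would establish the lower bound. Fix any feasible pair $(p^{red},\eta)$, i.e.\ $p^{red}\ge 0$ with $\sum_{s\in\mathcal{S}^{red}}p_s^{red}=1$ and $\eta=[\eta_{s\tilde s}]\ge 0$ satisfying both marginal constraints. For each $s\in\mathcal{S}$, using only the row-marginal constraint $\sum_{\tilde s\in\mathcal{S}^{red}}\eta_{s\tilde s}=p_s$,
\[
\sum_{\tilde s\in\mathcal{S}^{red}}\norm{e_s-e_{\tilde s}}_2\,\eta_{s\tilde s}\ \ge\ \Big(\min_{\tilde s\in\mathcal{S}^{red}}\norm{e_s-e_{\tilde s}}_2\Big)\sum_{\tilde s\in\mathcal{S}^{red}}\eta_{s\tilde s}\ =\ p_s\,\min_{\tilde s\in\mathcal{S}^{red}}\norm{e_s-e_{\tilde s}}_2 .
\]
Summing over $s\in\mathcal{S}$ and noting that every term with $s\in\mathcal{S}^{red}$ vanishes, because $\norm{e_s-e_s}_2=0$ forces $\min_{\tilde s\in\mathcal{S}^{red}}\norm{e_s-e_{\tilde s}}_2=0$ for such $s$, leaves precisely $\sum_{\tilde s\notin\mathcal{S}^{red}}p_{\tilde s}\min_{s\in\mathcal{S}^{red}}\norm{e_s-e_{\tilde s}}_2$. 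Since this bound is independent of $p^{red}$, it bounds the full minimization in \eqref{mindist} from below.

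For the reverse inequality I would exhibit a feasible pair attaining the bound. For each $\tilde s\in\mathcal{S}^{del}$ pick a nearest preserved scenario $s(\tilde s)\in\argmin_{s\in\mathcal{S}^{red}}\norm{e_s-e_{\tilde s}}_2$ (breaking ties arbitrarily), and set $\eta_{ss}:=p_s$ for $s\in\mathcal{S}^{red}$, $\eta_{\tilde s\,s(\tilde s)}:=p_{\tilde s}$ for $\tilde s\in\mathcal{S}^{del}$, and all remaining entries $0$. The row marginals hold by construction; the induced column marginals are $p_s^{red}=\sum_{s'\in\mathcal{S}}\eta_{s's}=p_s+\sum_{\tilde s\in\mathcal{S}_s^{del}}p_{\tilde s}$, which is exactly the redistribution rule \eqref{optimal_redistribution_rule}, is nonnegative, and sums to $\sum_{s\in\mathcal{S}^{red}}p_s+\sum_{\tilde s\in\mathcal{S}^{del}}p_{\tilde s}=1$, hence feasible. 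Evaluating the objective, only the off-diagonal entries contribute, giving $\sum_{\tilde s\in\mathcal{S}^{del}}p_{\tilde s}\norm{e_{s(\tilde s)}-e_{\tilde s}}_2=\sum_{\tilde s\in\mathcal{S}^{del}}p_{\tilde s}\min_{s\in\mathcal{S}^{red}}\norm{e_s-e_{\tilde s}}_2$, matching the lower bound. Therefore the minimum is attained, equals \eqref{mindist}, and is realized at the distribution \eqref{optimal_redistribution_rule}.

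The only genuinely delicate point — and the step I would present most carefully — is the interplay of the two marginal constraints: because $p^{red}$ is optimized over rather than fixed, the column-sum constraints impose nothing, so the transportation linear program decouples row by row into the trivial subproblems ``ship the mass $p_s$ entirely to the cheapest available column''. Making this explicit (either via the inequality above or by the row-wise separation of the LP) is what produces the per-scenario nearest-neighbour form of the result. A minor remark I would add is that any selection $s(\tilde s)$ of the $\argmin$ yields an optimal $p^{red}$, so the optimal redistribution need not be unique when nearest preserved scenarios are not unique, whereas the optimal value \eqref{mindist} always is.
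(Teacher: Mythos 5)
Your proof is correct. Note, however, that the paper does not prove this statement at all: it imports it verbatim from the cited references (Theorem~2.1 of Heitsch and R\"omisch, 2003, and Theorem~2 of Dupa\v{c}ov\'a et al., 2003), so there is no in-paper argument to compare against. Your argument is the standard one underlying those references --- the lower bound via the row-marginal constraints alone (so the transportation LP decouples row by row once $p^{red}$ is free), the vanishing of the diagonal terms for retained scenarios, and the explicit nearest-neighbour transport plan whose induced column marginals are exactly the redistribution rule \eqref{optimal_redistribution_rule}; your closing remarks on tie-breaking and on the fact that only nonnegativity and vanishing on the diagonal of the cost are actually used (not the full metric structure) are accurate.
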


Equation \eqref{optimal_redistribution_rule} is known as \textit{optimal redistribution rule}. It states that the new probability of each retained scenario in $\mathcal{S}^{red}$ corresponds to the sum of its original probability $p_s$ and the probabilities of all deleted scenarios closest to it in terms of the $\ell_2$-distance.

On the other hand, the choice of optimally determining $\mathcal{S}^{red}$ with fixed cardinality $|\mathcal{S}^{red}|$ may be established by solving a \textit{set-covering} problem over $\mathcal{S}$, which is known to be NP-hard. However, efficient solution algorithms are available for the case of $|\mathcal{S}^{red}|=|\mathcal{S}|-1$ and $|\mathcal{S}^{red}|=1$.

When $|\mathcal{S}^{red}|=1$, the set covering problem simplifies to:
\begin{equation}\label{set_covering_forward_selection}
\min_{\tilde{s} \in \mathcal{S}} \sum_{s \in \mathcal{S}} p_s \norm{e_s-e_{\tilde{s}}}_2.
\end{equation}

If the minimum occurs at $\overline{s}$, only scenario $e_{\overline{s}}$ is retained and its probability will be $p_{\overline{s}}^{red}=1$, consistently with the redistribution rule \eqref{optimal_redistribution_rule}. We recursively solve problem \eqref{set_covering_forward_selection}, thus allowing us to consider cases in which $|\mathcal{S}^{red}|>1$, selecting more scenarios that will not be deleted. This procedure, called \textit{forward selection algorithm}, determines an index set $\{\overline{s}_1,\ldots,\overline{s}_{|\mathcal{S}^{red}|}\}$ such that:
\begin{equation}\label{formula_forward_selection}
\overline{s}_m \in \argmin_{\tilde{s} \in \mathcal{S}^{del}_{[m-1]}} \text{ } \sum_{o \in \mathcal{S}^{del}_{[m-1]}\setminus\{\tilde{s}\}} p_o \cdot \min_{s \notin \mathcal{S}^{del}_{[m-1]}\setminus\{\tilde{s}\}} \norm{e_o-e_s}_2 \qquad \forall m=1,\ldots,|\mathcal{S}^{red}|,
\end{equation}
where $\mathcal{S}^{del}_{[0]}:=\mathcal{S}$ and $\mathcal{S}^{del}_{[m-1]}:=\mathcal{S}\setminus\{\overline{s}_1,\ldots,\overline{s}_{m-1}\}$ for $m=2,\ldots,|\mathcal{S}^{red}|$. The procedure reflects the structure of $D(P,P^{red})$ in \eqref{mindist} and can be executed using the \textit{Fast Forward Selection} (FFS) algorithm (see Algorithm \ref{alg:FFS} in \ref{sec_appendix_FFS_algo}).

For the sake of illustration, Figure \ref{fig_scenario_reduction} presents an example where the FFS algorithm is applied to a one-dimensional scenario tree with $|\mathcal{S}|=50$ scenarios, each with the same probability $p_s=\frac{1}{50}$. The goal is to reduce the tree to a subset of $|\mathcal{S}^{red}|=10$ scenarios. The FFS algorithm selects the scenarios shown in black in Figure \ref{fig_scenario_reduction}a and assigns to them new probabilities (following equation \eqref{optimal_redistribution_rule}), which are  depicted in Figure \ref{fig_scenario_reduction}b.

\begin{figure}[ht!]
\centering
\subfloat[Original scenario tree (50 scenarios).] 
{\includegraphics[width=.5\textwidth]{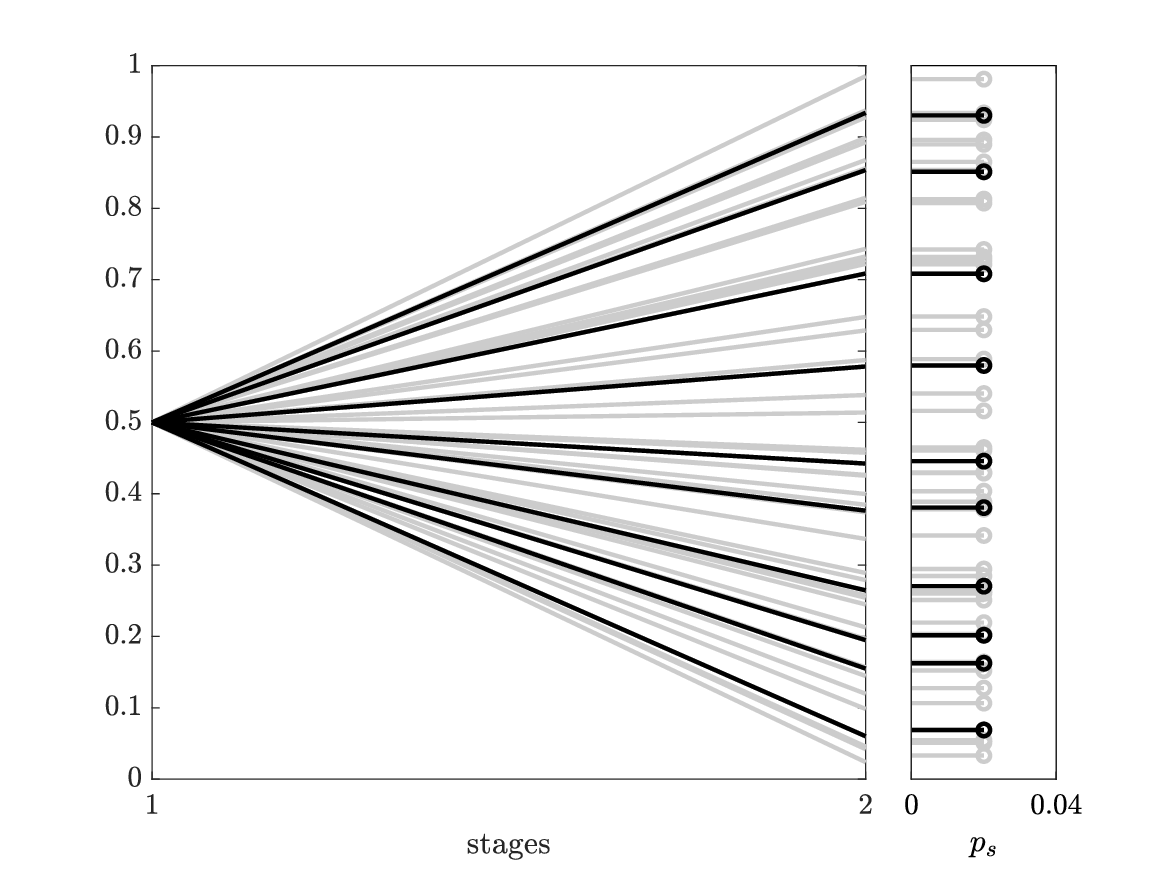}}
\subfloat[Reduced scenario tree (10 scenarios).]
{\includegraphics[width=.5\textwidth]{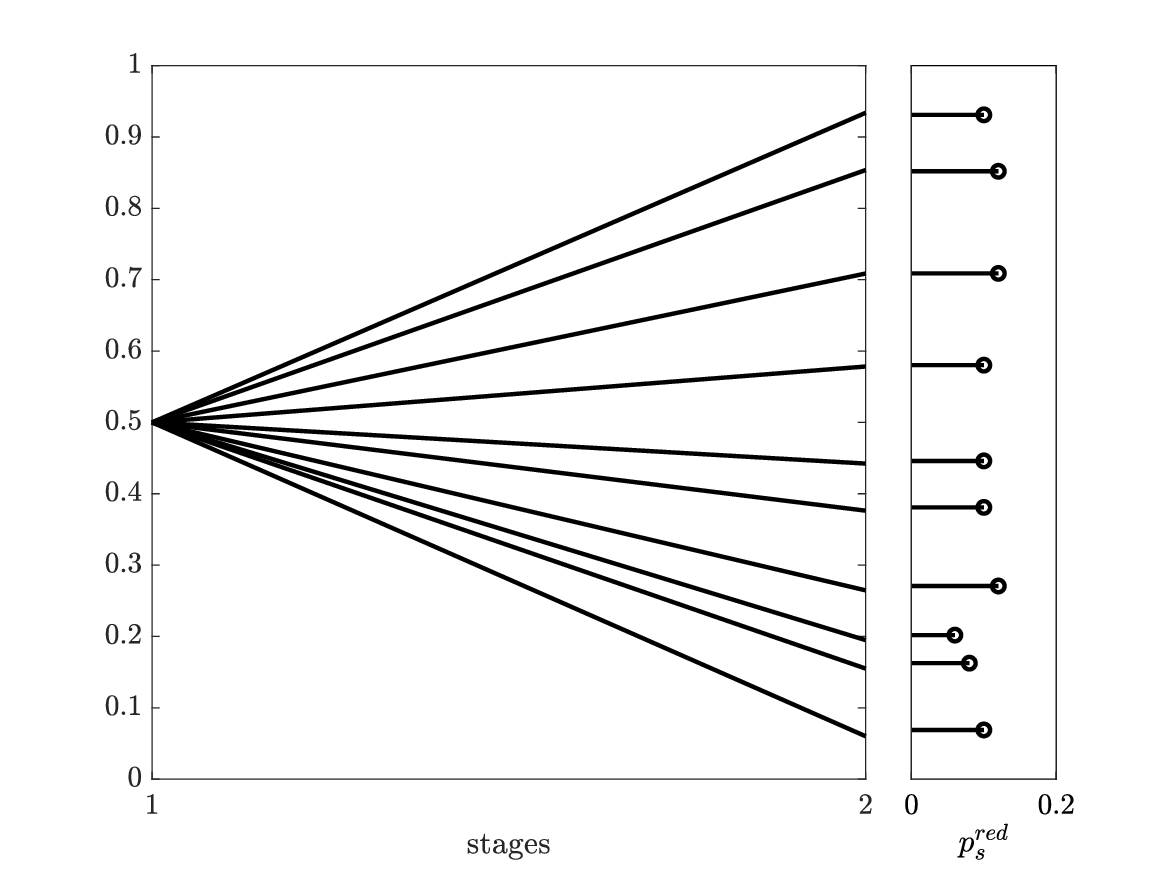}}\\
\caption{Example of the FFS algorithm application. Left panel: the original scenario tree with 50 scenarios and their corresponding probabilities. The 10 selected scenarios are highlighted in black. Right panel: the reduced scenario tree, consisting of the 10 scenarios selected by the FFS algorithm, along with their reassigned probabilities.}
\label{fig_scenario_reduction}
\end{figure}

\section{Computational results} \label{sec:results}
In this section, we present and discuss the results of the numerical experiments with the aim of:
\begin{itemize}
    \item Evaluating the performance of the proposed heuristic in both deterministic and stochastic settings;
    \item Validating the stochastic model in terms of in-sample stability (see \cite{KauWal2007});
    \item Measuring the impact of uncertainty and the quality of the deterministic solution in a stochastic setting (see \cite{birge2011introduction});
    \item Performing a sensitivity analysis on varying input energy values.
\end{itemize}

The model and the heuristic were implemented in Python (v. 3.12.2) and solved using Gurobi (v. 11.0). All computational experiments were run on a MacBookPro17.1 with an Apple M1 processor of 8 cores and 16 GB of RAM memory. Unless otherwise specified, a runtime limit of three hours (10800 seconds) is imposed on the solver and for the heuristic.

In the following, we start describing the instances and the parameters, together with the scenario generation procedure we opted for (see Section \ref{sec_instance_parameters}). Then, we assess the performance of the heuristic algorithm  both in deterministic and stochastic settings (see Section \ref{sec_performance_heuristic_DET} and Section \ref{sec_performance_heuristic_STOCH}, respectively). Finally, we discuss managerial insights and the effect of different energy thresholds on the SEVRP-T (see Section \ref{sec_managerial_insights}).

\subsection{Instances and parameters} \label{sec_instance_parameters}
We considered a selection of benchmark instances taken from~\cite{montoya2017electric} (available from \url{http://www.vrp-rep.org/}). We used instances composed by 10, 20, 40 and 80 customers. Additionally, we created a new set of twenty instances, each having  15 customers, by removing the last five customers from the 20-customer instances. Instances are named using the following coding scheme: tcAcBsCcDE. Specifically, A is the method used to locate customers with 0: random uniform, 1: clustered, 2: mixture of both; B is the number of customers; C is the number of CSs; D is equal to ``t'' if the CSs are located using a \textit{p-median} heuristic and ``f'' if the CSs are randomly located; E is the number of the instance for each combination of parameters. Three types of CSs are considered (slow, moderate, and fast). 

As in \cite{montoya2017electric} and \cite{froger2022electric}, we considered an EV with a consumption rate of 0.125 kWh per unit of distance. However, we assumed  $Q^{max} = 24$, as opposed to a battery capacity of  16 kWh considered in  \cite{montoya2017electric} and \cite{froger2022electric}. This choice was made to avoid infeasibility issues due to uncertain nature of the energy consumption. Other relevant parameters are fixed as follows: number $n$ of copies of each CS equal to 3; $Q^T = 30\%~Q^{max}=7.2$ kWh; $Q^G = 80\%~Q^{max}=19.2$ kWh; $M$ is set to greatest distance between two nodes. In Algorithm \ref{alg:ILS-P} we impose a maximum number $I_{max}$ of ILS iterations equal to 2000, and in Algorithm \ref{alg:VND} parameter$\gamma=1$. Finally, we have removed instances tc1c10s2cf3 and tc1c10s3cf3 due to infeasibility issues arising with a large number of scenarios.
\subsubsection{Scenario generation} \label{sec_scen_gen}
In this section, we discuss the procedure we adopted to generate scenario trees for the uncertain energy consumption. Considering the energy consumption $\hat{e}_{ij} \in \mathbb{R}^+$ from the \cite{montoya2017electric}, we have created $|\mathcal{S}|$ energy consumption scenarios for each arc. We recall that we denote by $e_{ijs}$ the energy consumption along arc $(i,j)\in\mathcal{A}$ under scenario $s$ and $p_s$ its probability of occurrence. If $|\mathcal{S}|=1$, then $e_{ij1} = \hat{e}_{ij}$ for all $ i,j \in \mathcal{V}$. On the other hand, if $|\mathcal{S}|>1$, scenarios were sampled according to a Monte Carlo procedure (see \cite{Shap20003}) from three possible probability distributions: Uniform ($U$), truncated Normal ($N$) and truncated Exponential ($E$). The energy consumption on each arc $(i,j)\in\mathcal{A}$ was sampled independently. To ensure results reproducibility, we considered the same initial random seed. We assumed that the three distributions have the same mean equal to the deterministic energy consumption, i.e., $\mu^U_{ij} = \mu^N_{ij} = \mu^E_{ij} = \hat{e}_{ij}$. For the Uniform distribution we randomly sampled from the interval $[0.75 \hat{e}_{ij}, 1.25 \hat{e}_{ij}]$, such that a maximum deviation of 25\% from the average consumption $\hat{e}_{ij}$ is allowed. For the Uniform distribution, this corresponds to a variance equal to $\sigma^U_{ij} = \frac{(0.5\hat{e}_{ij})^2}{12}$. For the Normal and Exponential distributions, we considered the same variance of the Uniform case, i.e., $\sigma^N_{ij} = \sigma^E_{ij} = \sigma^U_{ij}$. Lastly, we assumed $p_s=\frac{1}{|\mathcal{S}|}$.

\subsection{Performance of the ILS-SP heuristic in the deterministic setting} \label{sec_performance_heuristic_DET}
In this section, we assess the performance of our heuristic on a selection of benchmark instances, assuming $|\mathcal{S}|=1$ with $e_{ij1}=\hat{e}_{ij}$. We compare the performance of the mathematical model solved by Gurobi and the ILS-SP heuristic through different indicators in Table \ref{tab_sevrp-deterministic_10_15_20}: the number of instances solved to optimality out of  total number of instances (\# Opt / \# Inst), CPU time, number of routes, optimality gap provided by Gurobi and gap between the objective function values $z_{algo}$ and $z_{model}$ computed as follows:
\begin{equation}\label{gap_algo_model}
Gap :=    \frac{z_{algo}-z_{model}}{z_{model}}.
\end{equation}
If Gurobi is not able to solve the model within the runtime limit, we set  $z_{model}$ to the objective function value achieved by the incumbent solution found by the solver. Otherwise, $z_{model}$ corresponds to the optimal value.

In Table \ref{tab_sevrp-deterministic_10_15_20} we outline the average results of instances with 10, 15 and 20 customers. Detailed results are reported in tables \ref{tab_sevrp-deterministic_10}--\ref{tab_sevrp-deterministic_20} in \ref{sec_appendixA_deterministic}. We note that all the 10-customer instances are solved at optimality by the model as well as the heuristic. Conversely, in five out of twenty and eighteen out of twenty instances with 15 and 20 customers, respectively, the model is not able to find an optimal solution within the runtime limit, resulting in an average Gurobi gap of 6.95\% and 12.14\%, respectively. However, in all 15-customer and 20-customer instances solved to optimality by the solver, the heuristic found the optimal solutions as well. Moreover, the solutions provided by the heuristic are all either equal or better than the best feasible solution found by Gurobi.

In the three sets of instances, solving the model with the heuristic compared to Gurobi provides an average CPU time saving of 84.08\%, 99.14\% and 99.15\%, respectively. From this analysis, we conclude that the proposed ILS-SP heuristic provides high-quality solutions for small-sized instances for the deterministic version of  SEVRP-T, within reasonable computational times.

\begin{table}[ht!]
  \centering
  \caption{Comparison between Algorithm \ref{alg:ILS-P} and Gurobi solver for instances with $|\mathcal{I}|=\{10,15,20\}$ customers in the deterministic setting ($|\mathcal{S}| = 1$). Average results are reported.}
  \begin{tabular}{|c|c|c|c|c|c|c|c|}
    \hline
    \multirow{2}{*}{$|\mathcal{I}|$} & \multirow{2}{*}{\# Opt / \# Inst} & \multicolumn{2}{|c|}{Time (s)} &  \multicolumn{2}{|c|}{Number of routes} &  \multicolumn{2}{|c|}{Gap}
    \\
    \cline{3-8}
     & & Algorithm & Model & Algorithm & Model & Gurobi & Algorithm-Model
     \\
    \hline
10 & 18 / 18 & 11.40	& 71.59 & 1.56 & 1.56 & 0.00\% & 0.00\%
\\
15 & 15 / 20 & 39.06 & 4552.97 & 1.40	& 1.40 & 6.95\% & 0.00\%
\\
20 & 2 / 20 & 91.49 & 10709.27 & 1.65 & 1.65 &	12.14\% & $-0.29\%$
\\
\hline
  \end{tabular}
  \label{tab_sevrp-deterministic_10_15_20}
\end{table}

\subsection{Performance of the ILS-SP heuristic in the stochastic setting} \label{sec_performance_heuristic_STOCH}
In this section, we discuss the performance of the ILS-SP heuristic when solving the SEVRP-T under uncertain energy consumption. We start by comparing the two different approaches in solving the fixed route vehicle charging problem SFRVCP-T outlined in Section \ref{sec:fixroute}. Then, we assess the performance of our heuristic over different energy consumption probability distributions. Finally, we provide the results of an in-sample stability analysis (see Section \ref{sec_insample_stability}) and measure the impact of uncertainty on the problem under investigation (see Section \ref{sec_impact_of_uncertainty}).

In Table~\ref{table:comparison1} we report the computational time of the fixed route model outlined in Section~\ref{sec:fixroute} solved by Gurobi versus Algorithm \ref{alg:E-SFRVCP-T} (E-SFRVCP-T($r$)), over a set of 12 fixed routes with a number of arcs $l=\{3,4,\ldots,15\}$. These fixed routes are obtained by randomly sampling sequences of $l-1$ customers from SEVRP-T instances. We consider $|\mathcal{S}|= \{1,5,20,50,100\}$ for each fixed route.

We observe that Algorithm~\ref{alg:E-SFRVCP-T} is several orders of magnitude faster than the corresponding SFRVCP-T model solved by Gurobi, consistently across all sampled routes. Indeed, the average saving in terms of CPU time is of 99.87\%.

\begin{table}[ht!]
\centering
\caption{CPU time (in milliseconds) comparison between the exact Algorithm \ref{alg:E-SFRVCP-T} and Gurobi when solving the SFRVCP-T.}
\begin{tabular}{|c|ccccc|ccccc|}
\hline
 & \multicolumn{5}{c|}{\textbf{SFRVCP-T solved by Gurobi (ms)}} & \multicolumn{5}{c|}{\textbf{SFRVCP-T solved by E-SFRVCP-T (ms)}} \\
 & \multicolumn{5}{c|}{$|\mathcal{S}|$} & \multicolumn{5}{c|}{$|\mathcal{S}|$} \\
$l$ & 1 & 5 & 20 & 50 & 100 & 1 & 5 & 20 & 50 & 100 \\
\hline
3   & 84.8  & 195.7 & 613.2 & 1343.7 & 2690.5 & 0.0 & 0.3 & 1.0 & 2.4  & 4.9  \\
4   & 52.2  & 151.3 & 547.2 & 1232.0 & 2445.2 & 0.0 & 0.1 & 0.5 & 1.4  & 2.7  \\
5   & 39.1  & 121.2 & 389.5 & 897.6  & 1722.8 & 0.0 & 0.1 & 0.5 & 1.1  & 2.3  \\
6   & 68.7  & 230.6 & 918.5 & 2061.5 & 4068.5 & 0.1 & 0.2 & 1.0 & 2.7  & 5.5  \\
7   & 101.1 & 312.8 & 1219.2 & 2639.2 & 5331.2 & 0.1 & 0.5 & 1.8 & 4.5  & 9.0  \\
8   & 98.2  & 342.6 & 1295.0 & 2911.1 & 5906.4 & 0.1 & 0.5 & 1.8 & 4.2  & 8.7  \\
9   & 116.5 & 456.6 & 1728.7 & 3912.7 & 7843.8 & 0.2 & 0.7 & 2.7 & 6.7  & 13.0 \\
10  & 123.2 & 466.1 & 1724.5 & 3925.2 & 7962.7 & 0.1 & 0.6 & 2.1 & 5.2  & 10.3 \\
11  & 153.7 & 552.6 & 2004.6 & 4676.7 & 9340.6 & 0.2 & 0.7 & 2.7 & 6.8  & 14.0 \\
12  & 169.2 & 646.1 & 2599.4 & 5957.0 & 12160.2 & 0.2 & 1.0 & 3.7 & 9.2  & 18.4 \\
13  & 151.5 & 575.9 & 2242.4 & 5115.9 & 10368.8 & 0.2 & 0.8 & 3.1 & 7.0  & 14.0 \\
14  & 319.3 & 978.8 & 3447.1 & 8501.7 & 17336.0 & 0.3 & 1.6 & 6.1 & 14.9 & 31.5 \\
15  & 166.7 & 768.1 & 2916.4 & 6879.5 & 14122.0 & 0.3 & 1.1 & 4.0 & 9.9  & 19.5 \\
\hline
avg & 126.4 & 446.0 & 1665.0 & 3850.2 & 7792.2 & 0.1 & 0.6 & 2.3 & 5.8  & 11.8 \\
\hline
\end{tabular}
\label{table:comparison1}
\end{table}

An analysis of the quality of the solutions in the stochastic setting is presented in Table \ref{tab_summary_10c}. We consider the 10-customer instances and provide a comparison between the ILS-SP heuristic (Algorithm \ref{alg:ILS-P}) and the solver, in terms of the number of instances solved to optimality out of  total number of instances (\# Opt / \# Inst), CPU time, number of routes and Gap. We consider an increasing number of scenarios $|\mathcal{S}|$ = \{5, 10, 20, 50\}, and the three probability distributions of the energy consumption defined in Section \ref{sec_scen_gen}. Detailed results are reported in tables \ref{tab_sevrp-stoch_c10_S5_unif}--\ref{tab_sevrp-stoch_c10_50_exponential} in \ref{sec_appendixB_stochastic}. 

For simplicity, in the following we describe the results obtained in the case of uniform distribution. Similar findings can be drawn for the normal and exponential probability distributions. First, we notice that when $|\mathcal{S}|=\{5,10\}$, on average more than 60\% of the instances are solved to optimality by Gurobi. In the cases not solved to optimality, the Gurobi gap is relatively low, i.e., 0.68\% and 4.12\%, respectively. Conversely, for larger values of $|\mathcal{S}|$, Gurobi is not able to guarantee optimality within the runtime limit for all instances. On the other hand, the ILS-SP heuristic provides better solutions in less than four minutes. 

When we compare the outcomes of the three probability distributions, we note that all the indicators are similar. We only observe that, when the size of the scenario tree is relatively small (i.e., $|\mathcal{S}|=\{5,10\}$), the number of routes for the exponential case is the lowest one, followed by the normal and the uniform distributions. 

\begin{table}[ht!]
  \centering
  \caption{Aggregate results for ILS-SP heuristic (Algorithm \ref{alg:ILS-P}) and Gurobi for 10-customer instances and $|\mathcal{S}| = \{5,10,20,50\}$.}
      \resizebox{\textwidth}{!}{
  \begin{tabular}{|c|c|c|c|c|c|c|c|c|}
    \hline
    \multirow{2}{*}{Distribution} & \multirow{2}{*}{$|\mathcal{S}|$} & \multirow{2}{*}{\# Opt / \# Inst} & \multicolumn{2}{|c|}{Time (s)} &  \multicolumn{2}{|c|}{Number of routes} &  \multicolumn{2}{|c|}{Gap}
    \\
    \cline{4-9}
     & & & Algorithm & Model & Algorithm & Model & Gurobi & Algorithm-Model
     \\
    \hline
\multirow{4}{*}{Uniform} & 5 & 16 / 18	& 29.49 & 1936.69 & 1.78 & 1.78 & 0.68\% &	0.00\%
\\
& 10 & 8 / 18& 52.46 & 7168.62 & 1.89 & 1.89 &	4.12\% &	$-0.17\%$
\\
& 20 & - / 18& 90.38 & 10800.00	& 1.94 & 1.94 & 12.18\% & 	$-0.59\%$
\\
& 50 & - / 18& 208.43 & 10800.00 & 1.94 & 2.06 & 24.17\% & $-5.47\%$
\\
\hline
\multirow{4}{*}{Normal} & 5 & 17 / 18 & 29.63 & 1443.24 & 1.72 & 1.72 & 0.45\% &	0.00\%
\\
& 10 & 7 / 18 & 51.22 & 6934.15 &	1.72 & 1.72 & 4.53\% & $-0.24\%$
\\
& 20 & - / 18& 96.53 & 10800.00 & 2.00 & 2.00 &	11.69\% & $-0.67\%$
\\
& 50 & - / 18& 211.86	& 10800.00 & 2.00 & 1.94 & 23.06\% & $-5.27\%$
\\
\hline
\multirow{4}{*}{Exponential} & 5 & 17 / 18 & 29.09 &	1574.14 &	1.61 &	1.61 &	1.18\%	& 0.00\%
\\
& 10 & 8 / 18 & 49.51	& 6941.21	& 1.67	& 1.67& 	4.45\% &	$-0.25\%$
\\
& 20 & - / 18& 90.18 & 10800.00 & 1.78 & 1.78 & 11.88\% & $-0.79\%$
\\
& 50 & - / 18& 199.29 & 10800.00 & 2.00 & 1.94 & 24.02\% & $-6.07\%$
\\
\hline
  \end{tabular}}
  \label{tab_summary_10c}
\end{table}

\subsubsection{In-sample stability} \label{sec_insample_stability}
In this section, we discuss the minimum number of scenarios guaranteeing stability and the impact of adopting the scenario reduction technique, presented in Section \ref{sec_scen_red_theory}, when dealing with the SEVRP-T. The results of an in-sample stability analysis (see \cite{KauWal2007}) over the 10-customer instances for the three considered distributions are displayed in Figure \ref{fig_insample_stability}. The detailed results are reported in Table \ref{tab_insample_stability_analysis} in \ref{sec_appendixB_stochastic}. We consider the following sizes of the scenario tree: $|\mathcal{S}|=\{5, 10, 20, 50, 100\}$. As the objective function value, we consider the one provided by the solver whenever Gurobi finds an optimal solution within the runtime limit. Otherwise, we use the value obtained through the ILS-SP heuristic. As empirically shown in the previous sections, these two values coincide if the model is solved to optimality by the solver. 

Regardless of the probability distributions, three different sets of six instances each can be identified, represented in Figure \ref{fig_insample_stability} with solid, dash-dotted and dotted line, respectively. The first set (solid line) converges to an average objective function value of $12.14$. The second set (dash-dotted line) exhibits a higher variability, with three different subsets. Indeed, three instances (tc0c10s3ct1, tc0c10s2ct1, tc1c10s3ct3) converge to $10.11$, two instances (tc2c10s3cf0, tc2c10s2cf0) converge to $11.10$, while the remaining one (tc1c10s2ct3) reaches an average value of $11.61$. Finally, the third set (dotted line) shows a similar behavior to the first one, with an average value of $7.81$ with 100 scenarios. We identify two outliers, tc1c10s2ct3 and tc1c10s3ct3, in the case of uniform and exponential distribution, respectively. We notice that, in the 83\% of the instances (fifteen out of eighteen), after 20 scenarios the objective function value becomes fairly stable on average. Therefore, we decided to consider this number as benchmark for the scenario tree size in the following discussion.

\begin{figure}[ht!]
\centering
\subfloat[Uniform distribution.]
{\includegraphics[width=.5\textwidth]{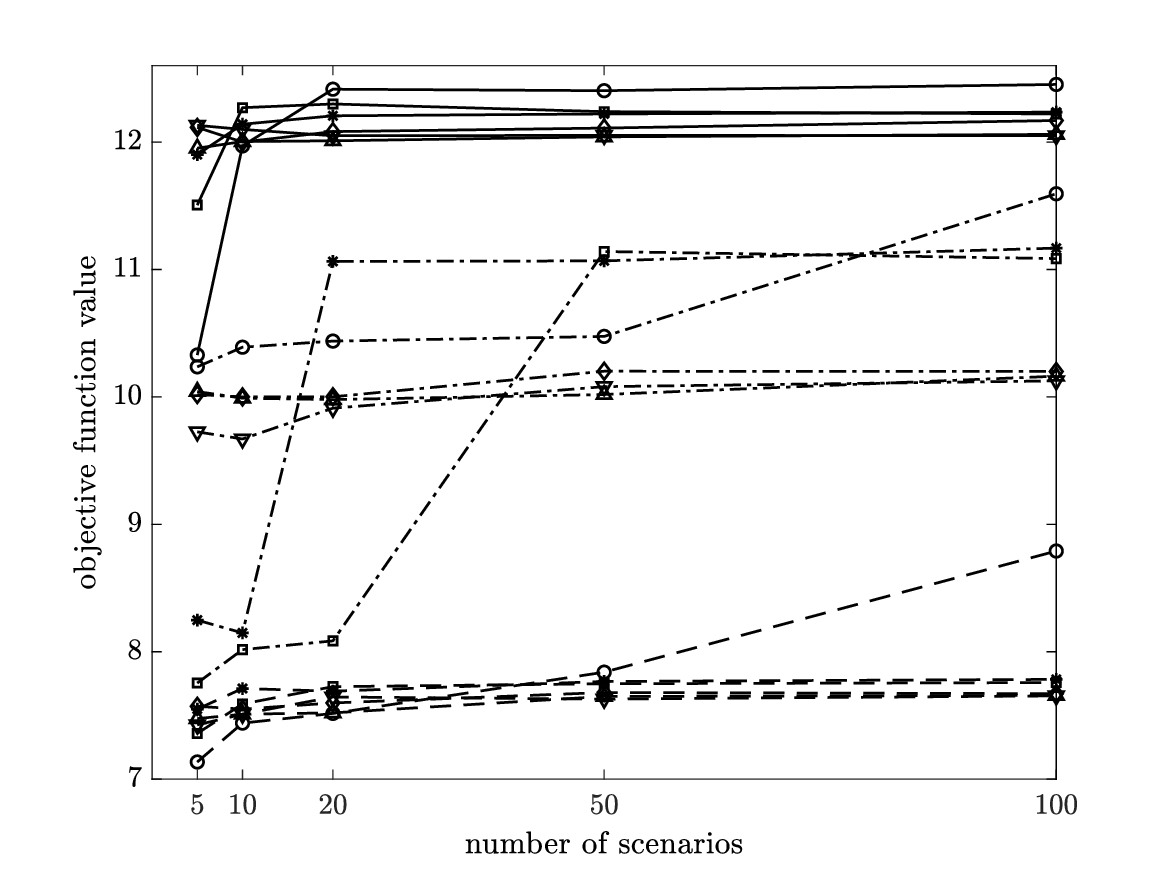}}
\subfloat[Normal distribution.]
{\includegraphics[width=.5\textwidth]{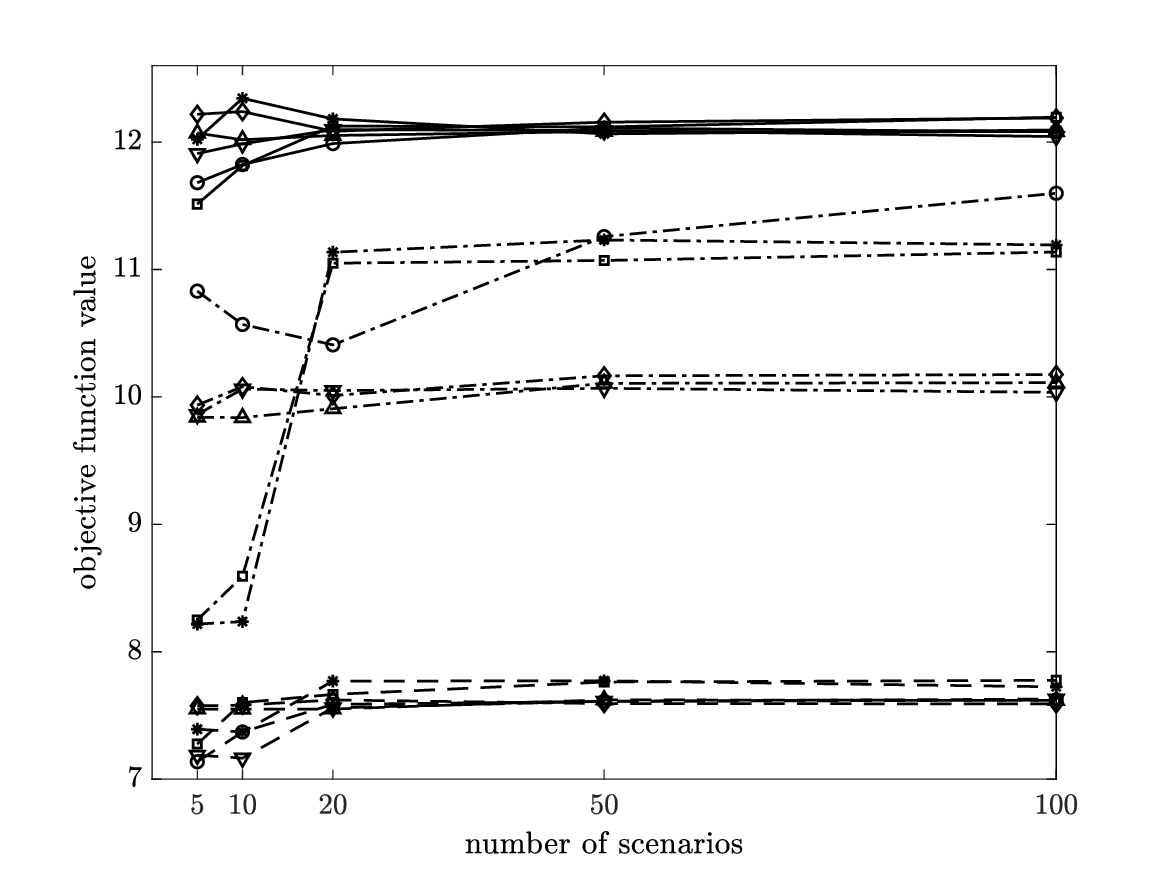}}\\
\subfloat[Exponential distribution.]
{\includegraphics[width=.5\textwidth]{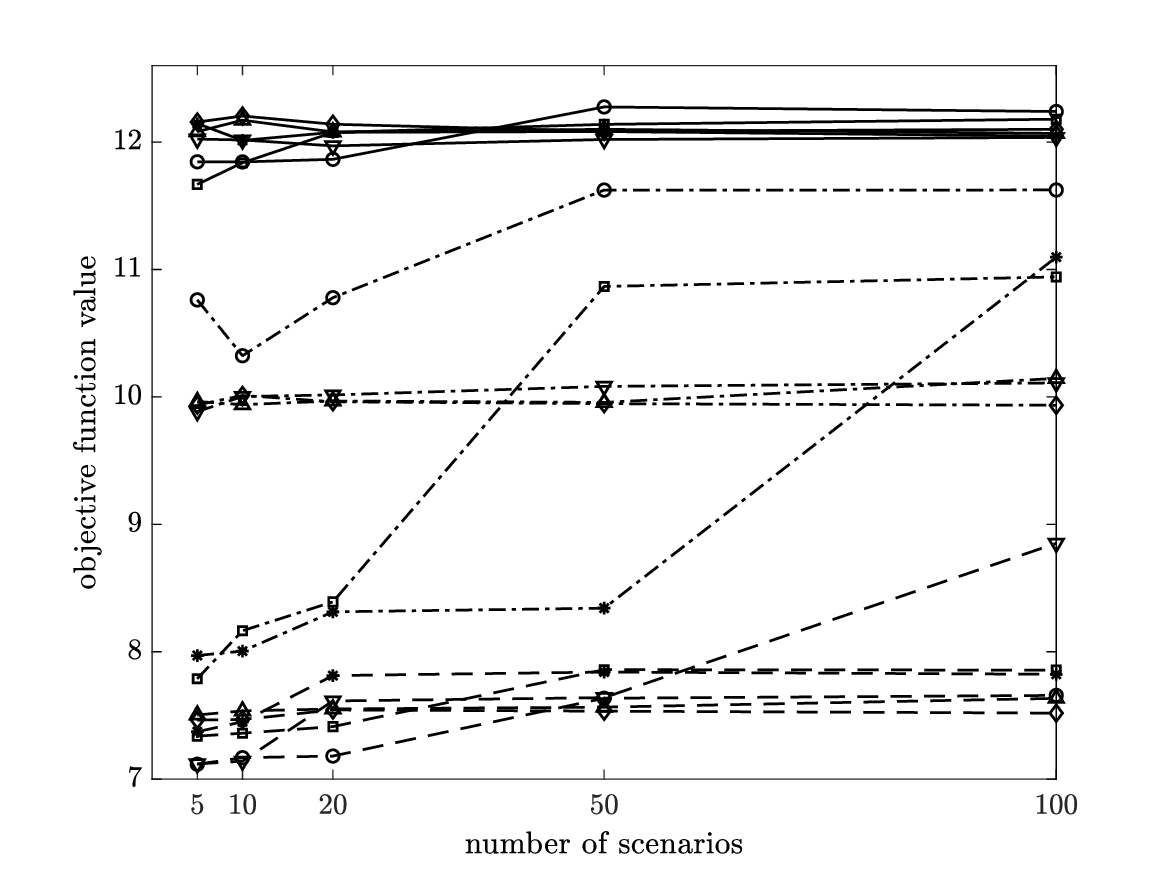}}
{\includegraphics[width=.5\textwidth]{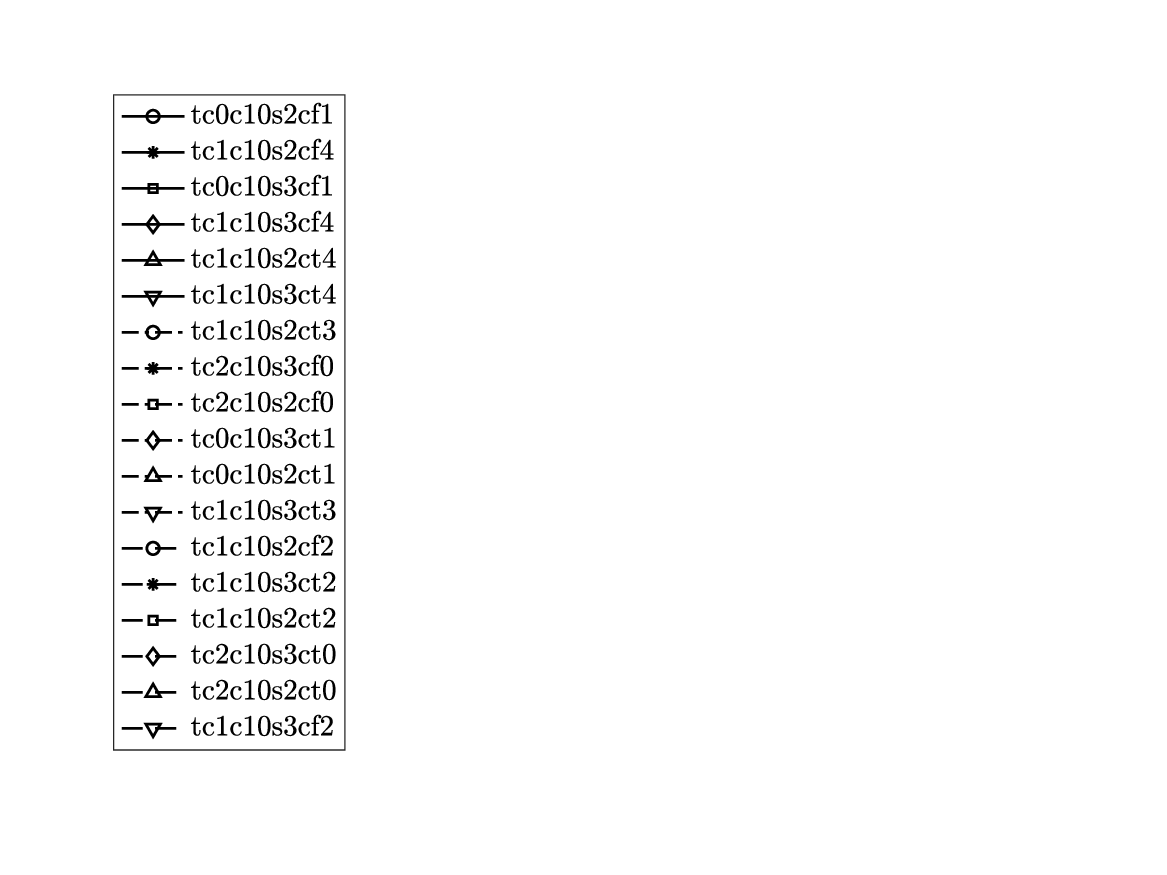}}
\caption{Approximate in-sample stability analysis for different energy consumption probability distributions.}
\label{fig_insample_stability}
\end{figure}

In Table \ref{tab_summary_40c_80c} we report the results on medium- and large-sized instances of 40 and 80 customers obtained through the ILS-SP heuristic and with a uniformly distributed energy consumption (see tables \ref{tab_sevrp-stochastic_40c_s1s10}-\ref{tab_sevrp-stochastic_80c_50sr20} in \ref{sec_appendixC} for details). We observe that increasing the size of the scenario tree generally leads to a higher average objective function value. However, for larger cases, the runtime limit is often reached, resulting in a limited number of iterations of the ILS-SP heuristic. This is particularly pronounced when the scenario tree size is $|\mathcal{S}|=50$. To address this issue, we apply the \textit{forward selection algorithm} as a scenario reduction technique described in Section \ref{sec_scen_red_theory} and implemented in Python via the \textit{ScenarioReducer} library (see \cite{Gioia2023}). Specifically, we reduce a scenario tree with 50 scenarios to a smaller one with 20 scenarios. These results  are shown in  Table \ref{tab_summary_40c_80c} and denoted with the symbol $\ddagger$. From a computational perspective, the CPU time,  the  number of iterations of the ILS-SP heuristic, and the number of routes are comparable with the base case of 20 scenarios on average for the cases with 40 and 80 customers. We note that the average objective function value obtained with 20 reduced scenarios is slightly higher than that of the base case of 20 scenarios and more inline with the  results obtained using the 50 scenarios. However, in the latter case, the CPU time is often close or even equal to the runtime limit of 10800s on average. For this reason, we conclude that the application of scenario reduction technique is effective in this context since it provides good solutions by enabling the heuristic to execute a larger number of iterations.

\begin{table}[ht!]
  \centering
  \caption{Solutions obtained through the ILS-SP heuristic (Algorithm \ref{alg:ILS-P}) for 40- and 80-customer instances, with a uniformly distributed energy consumption. Average results are reported. $\ddagger$ results for the scenario reduction case from 50 to 20.}
      \resizebox{0.9\textwidth}{!}{
  \begin{tabular}{|c|c|c|c|c|c|c|c|}
    \hline
    Number of customers & $|\mathcal{S}|$ &  Time (s) & Number of iteration &  Number of routes &  Objective
    \\
    \hline
\multirow{5}{*}{40} & 1	& 731.68 & 2000.00 & 1.90 & 17.89
\\
& 10	& 2593.75 &	2000.00 & 	2.10 & 18.29
\\
& 20	& 3972.28 & 1969.00	 & 2.30	& 18.57
\\
& 20{$\ddagger$}	& 4029.39 & 1969.00 &	2.30 & 18.87
\\
& 50	& 10613.88	& 1252.10	& 2.40 & 19.36
\\
\hline
\multirow{3}{*}{80} & 20 & 10800.00 &	343.00 &  2.75	& 26.85
\\
& 20{$\ddagger$} & 10800.00 & 360.85 & 2.75 & 27.16
\\
& {50} & {10800.00} & {174.35} & {2.65} & {28.19}
\\
\hline
  \end{tabular}}
  \label{tab_summary_40c_80c}
\end{table}

\subsubsection{The impact of uncertainty} \label{sec_impact_of_uncertainty}
To quantify the quality of the stochastic solution, we conducted an analysis with classical stochastic measures (see \cite{MagWal2012,CraMagPerRei2018}).  Based on the results from Section \ref{sec_insample_stability}, we perform the current analysis with $|\mathcal{S}|=20$. The average results are reported in Table \ref{tab_summary_stochmeas10c}, while details are in Table \ref{tab_stochastic_measures_10c_20scen} in \ref{sec_appendixB_stochastic}. The results on the so-called Recourse Problem (RP, see \eqref{eq:obj}--\eqref{eq:StocDomainZW}) are obtained through the ILS-SP heuristic, while all other results were obtained by the solver, since it converged to optimality in these cases.

Considering uniformly distributed energy consumption, on average the \%Expected Value of Perfect Information (\%EVPI) is 6.91\% of the optimal objective function value of the RP (see \eqref{eq:obj}--\eqref{eq:StocDomainZW}). This means that a decision maker would be ready to pay at most 6.91\% of the total cost for obtaining  perfect information on the energy consumption at the beginning of planning. Similar conclusions can be drawn for the other two probability distributions.

As a simpler approach, the decision maker may replace the energy consumption rate by its expected value as the average of the sampled values, and solve the  Expected Value Problem (EVP). The Value of Stochastic Solution (VSS) measures the expected cost from solving the stochastic model RP (see \eqref{eq:obj}--\eqref{eq:StocDomainZW}) rather than 
the EVP. We compute it as the difference between the EEV and the RP, where the EEV is the Expected result of using the EVP solution, denoting the objective function value of the RP model having the first-stage decision variables fixed at the optimal values obtained by solving the EVP (see \cite{MagAllBer2014}). In twelve out of eighteen instances the VSS is equal to infinity. This is due to the fact that, when performing the route prescribed by the EVP in the stochastic setting, we are not able to find a feasible solution for the threshold recourse policy.  Therefore, ignoring energy consumption uncertainty while planning routes may cause infeasibility.

\begin{table}[ht!]
  \centering
  \caption{Stochastic measures with 10-customer instances and $|\mathcal{S}| = 20$. Average results are reported.}
  \resizebox{0.6\textwidth}{!}{
  \begin{tabular}{|c|c|c|c|c|c|c|}
    \hline
    Distribution & RP &  WS & \%EVPI & EVP & EEV & \%VSS 
    \\
\hline
    Uniform & 9.90 & 9.18 & 6.91\% & 9.24 & inf & inf
\\
\hline
Normal & 10.05 & 9.22 & 7.83\% & 9.25 & inf & inf
\\
\hline
Exponential & 9.71 & 9.09 & 6.17\% & 9.25 & inf & inf
\\
\hline
  \end{tabular}}
  \label{tab_summary_stochmeas10c}
\end{table}

\subsection{Managerial insights} \label{sec_managerial_insights}
We conclude our analysis by providing some managerial insights. Specifically, we discuss the results of a sensitivity analysis on $Q^T$ and $Q^G$, which are two key SEVRP-T parameters. We consider the 40-customer instances with $|\mathcal{S}| = 20$, which are reduced from an initial set of 50 scenarios following a uniform energy consumption distribution. Average results are reported in Table \ref{tab_summary_maninsight_40c}, while extensive results are in tables \ref{tab_sevrp-stochastic_detailed_Qt}--\ref{tab_sevrp-stochastic_detailed_Qg} in \ref{sec_appendixD_managerial}.

\begin{table}[ht!]
  \centering
  \caption{Average results, considering uniform energy consumption distributions, obtained by varying the values of $Q^{T}$ and $Q^{G}$ through the ILS-SP heuristic (Algorithm \ref{alg:ILS-P}) for instances with 40 customers and $|\mathcal{S}| = 20$, reduced from an initial set of 50 scenarios through the FFS procedure (Algorithm \ref{alg:FFS}).}
      \resizebox{0.9\textwidth}{!}{
  \begin{tabular}{|c|c|c|c||c|c|c|}
\hline
& \multicolumn{3}{|c||}{$Q^T$} & \multicolumn{3}{|c|}{$Q^G$}
    \\
    & $20\%Q^{max}$ & $30\%Q^{max}$ & $40\%Q^{max}$ & $70\%Q^{max}$ & $80\%Q^{max}$ & $90\%Q^{max}$\\
\cline{2-7}
Number of routes & 2.45 & 2.30 & 2.10 & 2.05 & 2.30 & 3.16
\\
\hline
Objective & 18.83 & 18.87 & 20.27 & 18.66 & 18.87 & 20.40 
\\
\hline
  \end{tabular}}
  \label{tab_summary_maninsight_40c}
\end{table}

First, we account for three different possibilities of $Q^T$: $20\%Q^{max}$, $30\%Q^{max}$ and $40\%Q^{max}$. Increasing the value of $Q^T$ causes more frequent stops of the EVs, imposed by a stricter threshold recourse policy. This implies higher durations due to the charging operations, as confirmed by the results in Table \ref{tab_summary_maninsight_40c} (see the second, third and fourth columns).

We also consider $Q^G$ values of $70\%,80\%,90\%$ of $Q^{max}$. We recall that $Q^G$ is the SoC at the subsequent node after each detour. Therefore, when $Q^G$ is close to $Q^{max}$, the EVs require more time to be recharged, given that the slope of the nonlinear charging function is less steep in this final phase. Indeed, in our instances, the average percentage increase of the total time when $Q^G$ goes from $70\%$ to $90\%$ of $Q^{max}$ is 9.32\%. We note that an infeasibility issue occurs in one instance (tc1c40s5cf1, see Table \ref{tab_sevrp-stochastic_detailed_Qg} in \ref{sec_appendixD_managerial}) when $Q^G=90\%Q^{max}$. This arises because of the infeasibility of reaching nodes with a high  levels of $Q^G$.

Finally, we observe that increasing the value of $Q^T$ results in a decrease in the average number of routes. Conversely, increasing $Q^G$ leads to an increase in the average number of routes. This observation is not directly related to the specific application under investigation but rather mainly depends on the combinatorial nature of the problem. For the sake of illustration, we depict in Figure \ref{fig_managerialinsight} the first-stage routes obtained for the instance tc0c40s8cf0 when $Q^T=20\%Q^{max}$ (left panel, four routes) and $Q^T=40\%Q^{max}$ (right panel, two routes).

\begin{figure}[ht!]
\centering
\hspace*{-1cm}
\subfloat[{$Q^T=20\%Q^{max}$.}]
{\includegraphics[width=.6\textwidth]{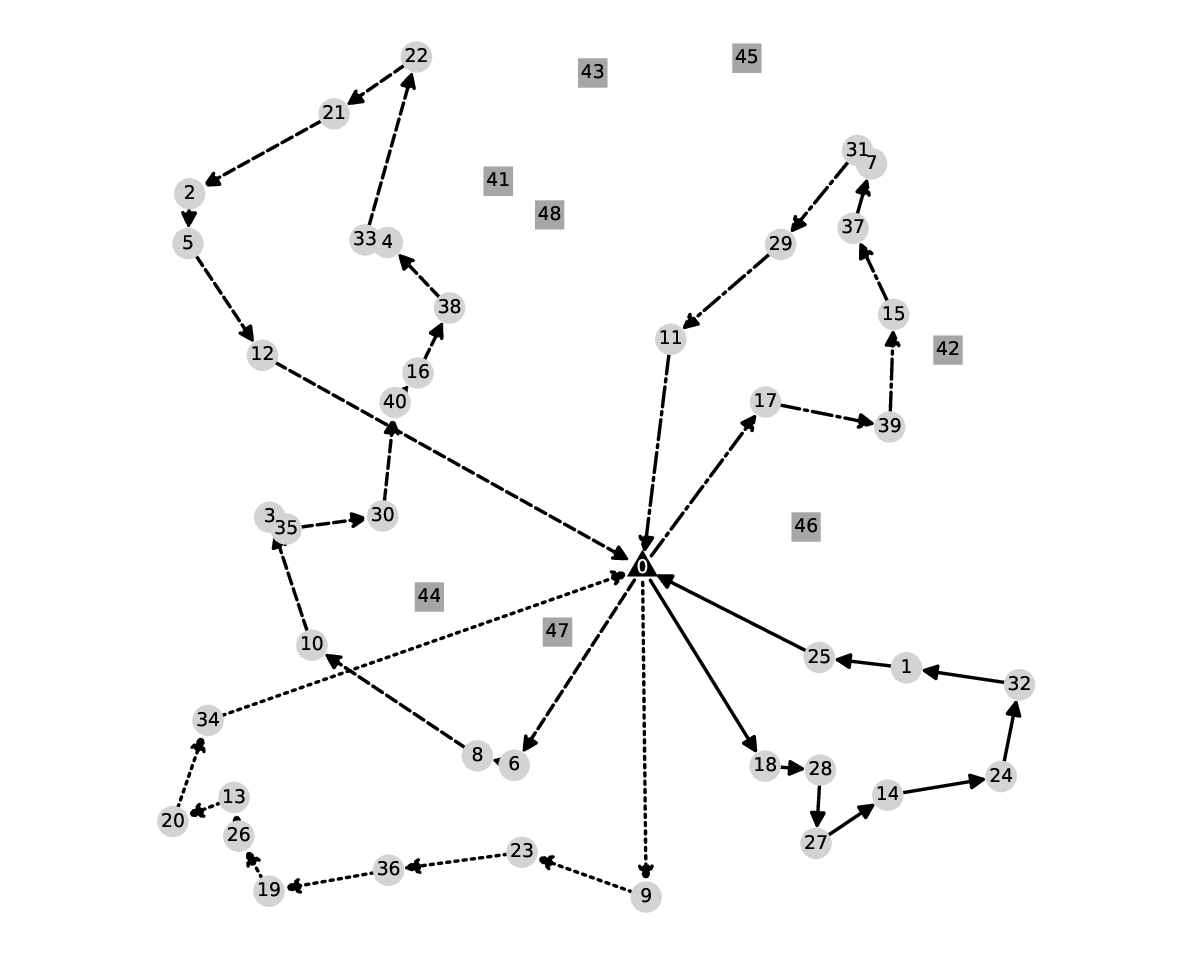}}
\hspace*{-1.43cm}
\subfloat[{$Q^T=40\%Q^{max}$.}]
{\includegraphics[width=.6\textwidth]{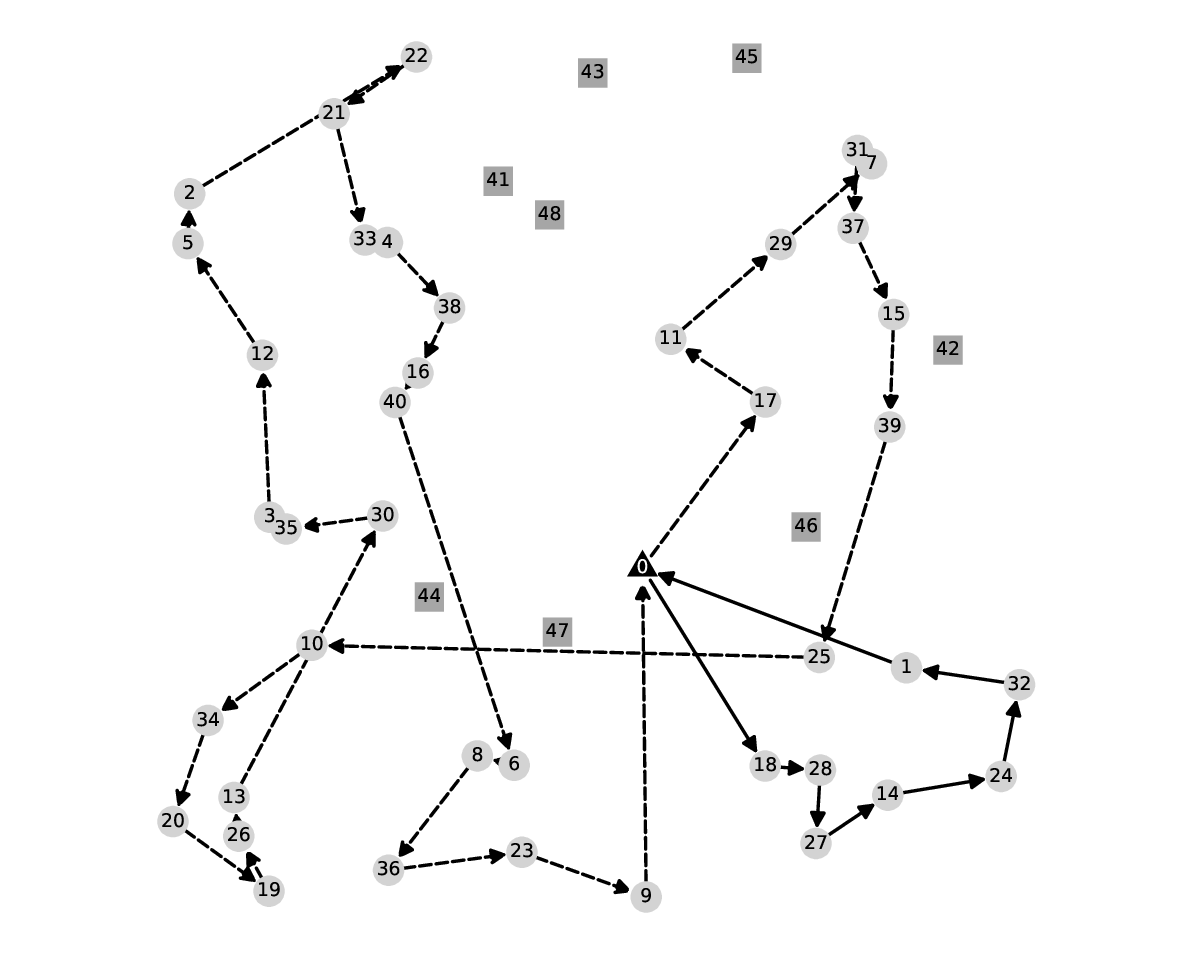}}
\caption{First-stage routes for instance tc0c40s8cf0 with $Q^T=20\%Q^{max}$ (left panel) and $Q^T=40\%Q^{max}$ (right panel). Depot, customers and charging stations are depicted as a triangle, circles and squares, respectively.}
\label{fig_managerialinsight}
\end{figure}

\section{Conclusions} \label{sec:conclusions}
In this paper, we considered the stochastic electric vehicle routing problem with a threshold recourse policy under uncertain energy consumption. We modeled the problem as a two-stage stochastic mixed-integer second-order cone model. In the first stage the sequence of customers visits is determined, while in the second stage a threshold recourse charging policy is implemented. According to this policy, if the state of charge of the battery reaches a predefined threshold level, the EV detours towards an appropriate CS. After a partial recharging operation, the EV completes  its first-stage route. As such, the threshold recourse policy allows detours towards CSs along arcs, rather than being confined to nodes.

Given the computational complexity of the model, we proposed a heuristic algorithm based on an Iterated Local Search procedure combined with a Set Partitioning phase. Specifically, we built a pool of high-quality routes through the ILS. To include charging operations in a given first-stage route, we proposed an exact algorithm to solve the stochastic fixed route vehicle charging problem with a threshold policy. Furthermore, to filter unpromising moves during the search phase, we derived two lower bounds on the duration of fixed routes. The most promising combination of routes is then selected by the Set Partitioning formulation. To handle cases involving large-sized instances and several scenarios, we applied a scenario reduction technique to the problem.

We conducted extensive computational experiments on instances adapted from the EVRP literature. We assessed the effectiveness of the heuristic in the deterministic and stochastic settings, showing good performances both in terms of solution quality and computational time. We analyzed the impact of different probability distributions. Moreover, we conducted an in-sample stability analysis to determine the number of scenarios yielding convergence. In addition, we evaluated the impact of uncertainty through the computation of stochastic measures, showing the importance of considering energy consumption uncertainty in this setting.

Regarding future developments, different streams of research can originate from this work. First of all, the generation of the scenario trees could be performed on the basis of real data, taking into account spatial and temporal correlations among travel speeds and energy consumptions. To this end, the increasingly available spatio-temporal data coming from different sources could be used to identify a suitable scenarios representation by applying Deep Learning techniques. Secondly, alternative recourse policies could be examined in order to relax some of the SEVRP-T assumptions. Thirdly, exact decomposition strategies, such as L-shaped methods, could be employed. Finally, a dynamic version of the stochastic problem monitoring energy consumption at an instantaneous level merits future investigation.

\section*{Acknowledgements}
This work has been supported by ``ULTRA OPTYMAL - Urban Logistics and sustainable TRAnsportation: OPtimization under uncertainTY and MAchine Learning'', a PRIN2020 project funded by the Italian University and Research Ministry (grant number 20207C8T9M).

This study was also carried out within the MOST - Sustainable Mobility National Research Center and received funding from the European Union Next-GenerationEU (PIANO NAZIONALE DI RIPRESA E RESILIENZA (PNRR) – MISSIONE 4 COMPONENTE 2, INVESTIMENTO 1.4 – D.D. 1033 17/06/2022, CN00000023), Spoke 5 ``Light Vehicle and Active Mobility'' and Spoke 10 “Logistics and Freight”. This manuscript reflects only the authors' views and opinions, neither the European Union nor the European Commission can be considered responsible for them.

Andrea Spinelli acknowledges the organizers and the experts of the EURO Summer Institute ESI2024 ``Decision-making under uncertainty for commodities and financial markets'', held in Ischia (Italy) from September 15 to 25, 2024. Finally, the authors  acknowledge the support received from Gruppo Nazionale per il Calcolo Scientifico (GNCS-INdAM).




\bibliography{Bibliography}

\newpage

\appendix

\section{Algorithm for the selection of non-dominated charging stations} \label{sec_appendix_nondominatedCS_algo}

\begin{algorithm}[H]
    \caption{NDCS($i,j)$}
    \textbf{Input:} Nodes $i,j\in\mathcal{I}^+$\newline
	\textbf{Output:} A set $\mathcal{C}_{ij}$ of non-dominated CSs
  \begin{algorithmic}[1]
	\Function{NDCS}{}
  \normalsize
  \State{$\mathcal{C}_{ij} \gets \mathcal{K}$} \Comment{\footnotesize All CSs are inserted} \normalsize
  \State{$\mathcal{\widetilde{K}} \gets \mathcal{K}$} \Comment{\footnotesize Auxiliary set to span over all CSs}\normalsize
  		\For{every CS $k_1 \in \mathcal{K}$}
            \State{$\mathcal{\widetilde{K}} \gets \mathcal{\widetilde{K}}\setminus \{k_1\}$}
            \State $d^{(k_1)}_{ij} \gets$ minimum distance between CS $k_1$ and arc $(i,j)$
            \If{$j=0$}
                \State{$\overline{q}_{k_1} \gets e_{k_1j}$} \Comment{\footnotesize SoC at CS $k_1$ after charging, before traversing the last arc} \normalsize
            \Else
                \State{$\overline{q}_{k_1} \gets Q^G+e_{k_1j}$}
            \EndIf
            \If{$Q^T<\frac{e_{ij}}{d_{ij}}d^{(k_1)}_{ij}$ or $\overline{q}_{k_1} > Q^{max}$}
                \State{$\mathcal{C}_{ij} \gets \mathcal{C}_{ij} \setminus \{k_1$\}} \Comment{\footnotesize Remove CS $k_1$ from the set $\mathcal{C}_{ij}$} \normalsize
            \Else
                \For{every CS $k_2 \in \mathcal{\widetilde{K}}$}
                    \State $d^{(k_2)}_{ij} \gets$ minimum distance between CS $k_2$ and arc $(i,j)$
                    \If{$k_1$ and $k_2$ have the same technology}
                        \If{$d_{k_1j}>d_{k_2j}$}
                            \If{$d^{(k_1)}_{ij}>\max\{d_{ik_2},d_{k_2j}\}$}
                                \State{$\mathcal{C}_{ij} \gets \mathcal{C}_{ij} \setminus \{k_1$\}}
                            \EndIf
                        \Else
                            \If{$d^{(k_2)}_{ij}>\max\{d_{ik_1},d_{k_1j}\}$}
                                \State{$\mathcal{C}_{ij} \gets \mathcal{C}_{ij} \setminus \{k_2$\}}
                            \EndIf
                        \EndIf
                    \ElsIf{$k_1$ has a slower technology than $k_2$}
                        \If{$d^{(k_1)}_{ij}+d_{k_1j}>\max\{d_{ik_2},d_{k_2j}\}+d_{k_2j}$}
                            \State{$\mathcal{C}_{ij} \gets \mathcal{C}_{ij} \setminus \{k_1$\}}
                        \EndIf
                    \ElsIf{$k_1$ has a faster technology than $k_2$}
                        \If{$d^{(k_2)}_{ij}+d_{k_2j}>\max\{d_{ik_1},d_{k_1j}\}+d_{k_1j}$}
                            \State{$\mathcal{C}_{ij} \gets \mathcal{C}_{ij} \setminus \{k_2$\}}
                        \EndIf
                    \EndIf
                \EndFor
            \EndIf
        \EndFor
        
\State \Return $\mathcal{C}_{ij}$

	\EndFunction
    \end{algorithmic}
    \label{alg:NDCS}
\end{algorithm}

\clearpage
\section{Fast forward selection algorithm for scenario reduction} \label{sec_appendix_FFS_algo}

\begin{algorithm}[H]
    \caption{FFS (see \cite{HeiRom2003} and \cite{Gioia2023})}
    \textbf{Input:} A set $\mathcal{S}$ of scenarios $e_s$ with probabilities $p_s$, the cardinality $|\mathcal{S}^{red}|$ of the set of reduced scenarios\newline
	\textbf{Output:} The set $\mathcal{S}^{red}$ of reduced scenarios with probabilities $p^{red}$
  \begin{algorithmic}[1]
	\Function{FFS}{}
  \normalsize
  		\State $z^{[1]} \gets \infty, \overline{s}_1 \gets 0$
		\For{every couple of scenarios $(o,\tilde{s}) \in \mathcal{S}$}
		\State $c_{o\tilde{s}}^{[1]} \gets \norm{e_o-e_{\tilde{s}}}_2$ \Comment{\footnotesize Distance between scenarios} \normalsize
		\EndFor
		\For{every scenario $\tilde{s}\in\mathcal{S}$} \Comment{\footnotesize See problem \eqref{set_covering_forward_selection}} \normalsize
		\State $z_{\tilde{s}} \gets \sum_{o \in \mathcal{S}\setminus\{\tilde{s}\}}p_o c_{o\tilde{s}}^{[1]}$
		\If{$z_{\tilde{s}}<z^{[1]}$}
		\State $z^{[1]} \gets z_{\tilde{s}}$, $\overline{s}_{1} \gets \tilde{s}$
		\EndIf
		\EndFor
		\State $\mathcal{S}_{[1]}^{del} \gets \mathcal{S}\setminus \{\overline{s}_1\}$ 
	
  \normalsize
  		\State $m\gets 2$
		\While{$m\leq |\mathcal{S}^{red}|$}
		  		\State $z^{[m]} \gets \infty, \overline{s}_m \gets 0$
  
		\For{every couple of scenarios $(o,\tilde{s}) \in \mathcal{S}^{del}_{[m-1]}$}
		\State $c_{o,\tilde{s}}^{[m]} \gets \min\{c_{o\tilde{s}}^{[m-1]},c_{o\overline{s}_{m-1}}^{[m-1]}\}$ \Comment{\footnotesize Update distance between scenarios} \footnotesize 
		\EndFor
		\For{every scenario $\tilde{s}\in\mathcal{S}^{del}_{[m-1]}$} \Comment{\footnotesize See \eqref{formula_forward_selection}} \normalsize
		\State $z_{\tilde{s}} \gets \sum_{o \in \mathcal{S}^{del}_{[m-1]}\setminus\{\tilde{s}\}}p_o c_{o\tilde{s}}^{[m]}$
		\If{$z_{\tilde{s}}<z^{[m]}$}
		\State $z^{[m]} \gets z_{\tilde{s}}$, $\overline{s}_{m} \gets \tilde{s}$
		\EndIf
		\EndFor
		\State $\mathcal{S}_{[m]}^{del} \gets \mathcal{S}_{[m-1]}^{del}\setminus \{\overline{s}_m\}$ 
				\EndWhile  
		\State $\mathcal{S}^{del} \gets \mathcal{S}^{del}_{[|\mathcal{S}^{red}|]}$, $\mathcal{S}^{red} \gets \mathcal{S}\setminus \mathcal{S}^{del}$ \Comment{\footnotesize Sets of deleted and reduced scenarios} \normalsize
		
  \normalsize
		\For{every scenario $s \in \mathcal{S}^{red}$} \Comment{\footnotesize See \eqref{optimal_redistribution_rule}} \normalsize
		\State $c^*_{s} \gets \infty$, $\mathcal{S}^{del}_s \gets \emptyset$
		\For{every scenario $\tilde{s} \in \mathcal{S}^{del}$}
		\State $c_{s\tilde{s}} \gets \norm{e_s-e_{\tilde{s}}}_2$
		\If{$c_{s\tilde{s}}< c^*_{s}$}
		\State $c^*_s \gets c_{s\tilde{s}}$, $\mathcal{S}^{del}_s \gets \tilde{s}$
		\ElsIf{$c_{s\tilde{s}} = c^*_{s}$}
		\State $\mathcal{S}^{del}_s \gets \mathcal{S}^{del}_s \cup \{\tilde{s}\}$
		\EndIf
		\EndFor
		\State $p_s^{red} \gets p_s + \sum_{\tilde{s} \in \mathcal{S}^{del}_s}p_{\tilde{s}}$ \Comment{\footnotesize Optimal redistribution rule} \normalsize
		\EndFor
		
		\State \Return $\mathcal{S}^{red}$, $p^{red}$
		
	\EndFunction
    \end{algorithmic}
    \label{alg:FFS}
\end{algorithm}

In the initial step of the procedure (lines 2-9), problem \eqref{set_covering_forward_selection} is solved, identifying scenario $\overline{s}_1$ and set $\mathcal{S}^{del}_{[1]}$. Next, the recursive phase (lines 10-19) selects the remaining scenarios $\overline{s}_2, \ldots,\overline{s}_{|\mathcal{S}^{red}|}$ to retain according to \eqref{formula_forward_selection}. Upon completion, sets $\mathcal{S}^{del}$ and $\mathcal{S}^{red}$ are determined (line 20). Finally, the optimal redistribution rule \eqref{optimal_redistribution_rule} is applied to compute the probabilities of the reduced scenarios (lines 21-29).

\clearpage
\section{Detailed results for instances with 10, 15, 20 customers - \\ deterministic setting} \label{sec_appendixA_deterministic}

In the following, we report the results of the numerical experiments for the instances with 10, 15 and 20 customers, in the deterministic setting ($|\mathcal{S}| = 1$). Specifically, we provide a comparison between ILS-SP heuristic (Algorithm~\ref{alg:ILS-P}) and Gurobi solver, in terms of CPU time, number of routes and optimality of the solution. The acronym ``LB'' stands for Lower Bound provided by Gurobi and the asterisk indicates that the time limit of 10800 seconds has been reached.

\begin{table}[ht!]
  \centering
  \caption{10 customers - deterministic setting.}
    \resizebox{\textwidth}{!}{
  \begin{tabular}{|c|c|c|c|c|c|c|c|c|c|}
    \hline
    \multirow{3}{*}{Instance} & \multicolumn{2}{|c|}{Time (s)} & \multicolumn{2}{|c|}{Number of routes}  & \multicolumn{3}{|c|}{Objective} & \multicolumn{2}{|c|}{Gap} \\
    \cline{2-10}
    & \multirow{2}{*}{Algorithm}& \multirow{2}{*}{Model} & \multirow{2}{*}{Algorithm} & \multirow{2}{*}{Model} & \multirow{2}{*}{Algorithm} & \multicolumn{2}{|c|}{Model} & \multirow{2}{*}{Gurobi} & \multirow{2}{*}{Algorithm-Model}\\
    \cline{7-8}
    &  &  &  &  &  & Incumbent & LB &  & \\
    \hline
      tc0c10s2cf1 & 11.05 &	0.76 & 2 & 2 & 9.97 & 9.97 & 9.97 & 0\% & 0\% \\
      tc0c10s2ct1 & 10.78 & 9.51 & 2 & 2 & 9.84 & 9.84 & 9.84 & 0\%	& 0\% \\
      tc0c10s3cf1 & 11.71 & 2.54 & 2 & 2 & 9.97 & 9.97 & 9.97 & 0\% & 0\% \\
      tc0c10s3ct1 & 11.99 & 16.75 & 2 & 2 & 9.84 & 9.84 & 9.84 & 0\% & 0\% \\
      tc1c10s2cf2 & 11.04 & 10.76 & 1 & 1 & 7.08 & 7.08 & 7.08 & 0\% & 0\% \\
      tc1c10s2cf4 & 12.36 & 6.70 & 2 & 2 & 11.85 & 11.85 & 11.85 & 0\% & 0\% \\
      tc1c10s2ct2 & 10.18 & 12.72 & 1 & 1 & 7.25 & 7.25 & 7.25 & 0\% & 0\% \\
      tc1c10s2ct3 & 11.66 & 354.42 & 1 & 1 & 11.04 & 11.04 & 11.04 & 0\% & 0\% \\
      tc1c10s2ct4 & 11.80 & 7.30 & 1 & 1 & 11.36 & 11.36 & 11.36 & 0\% & 0\%\\
      tc1c10s3cf2 & 11.27 & 4.02 & 1 & 1 & 7.08 & 7.08 & 7.08 & 0\% & 0\% \\
      tc1c10s3cf4 & 13.26 & 11.96 & 2 & 2 & 11.85 & 11.85 & 11.85 & 0\% & 0\%\\
      tc1c10s3ct2 & 11.01 & 19.04 & 1 & 1 & 7.25 & 7.25 & 7.25 & 0\% & 0\%\\
      tc1c10s3ct3 & 13.68 & 634.33 & 1 & 1 & 9.83 & 9.83 & 9.83 & 0\% & 0\%\\
      tc1c10s3ct4 & 12.47 & 15.51 & 1 & 1 & 11.36 & 11.36 & 11.36 & 0\% & 0\%\\
      tc2c10s2cf0 & 8.67 & 2.41 & 2	& 2 & 8.21 & 8.21 & 8.21 & 0\% & 0\%\\
      tc2c10s2ct0 & 11.74 & 41.03 & 2 & 2 & 7.45 & 7.45 & 7.45 & 0\% & 0\% \\
      tc2c10s3cf0 & 8.85 & 2.71 & 2	& 2 & 8.21 & 8.21 & 8.21 & 0\% & 0\%\\
      tc2c10s3ct0 & 11.71 & 136.18 & 2 & 2 & 7.53 & 7.53 & 7.53 & 0\% & 0\% \\ 
    \hline
  \end{tabular}}
  \label{tab_sevrp-deterministic_10}
\end{table}

\begin{table}[ht!]
  \centering
    \caption{15 customers - deterministic setting.}
    \resizebox{\textwidth}{!}{
  \begin{tabular}{|c|c|c|c|c|c|c|c|c|c|}
    \hline
    \multirow{3}{*}{Instance} & \multicolumn{2}{|c|}{Time (s)} & \multicolumn{2}{|c|}{Number of routes}  & \multicolumn{3}{|c|}{Objective} & \multicolumn{2}{|c|}{Gap} \\
    \cline{2-10}
    & \multirow{2}{*}{Algorithm}& \multirow{2}{*}{Model} & \multirow{2}{*}{Algorithm} & \multirow{2}{*}{Model} & \multirow{2}{*}{Algorithm} & \multicolumn{2}{|c|}{Model} & \multirow{2}{*}{Gurobi} & \multirow{2}{*}{Algorithm-Model}\\
    \cline{7-8}
    &  &  &  &  &  & Incumbent & LB &  & \\
    \hline
      tc0c15s3cf2 & 45.18 & 268.10 & 2 & 2 & 12.32 & 12.32 & 12.32 & 0\% & 0\% \\
      tc0c15s3ct2 & 35.53 & 142.13 & 2 & 2 & 11.21 & 11.21 & 11.21 & 0\% & 0\% \\
      tc0c15s4cf2 & 48.01 & 2817.04 & 2 & 2 & 12.32 & 12.32 & 12.32 & 0\% & 0\% \\
      tc0c15s4ct2 & 40.79 & 387.32 & 2 & 2 & 11.21 & 11.21 & 11.21 & 0\% & 0\% \\
      tc1c15s3cf1 & 25.88 & 4064.50 & 1 & 1 & 7.45 & 7.45 & 7.45 & 0\% & 0\% \\
      tc1c15s3cf3 & 42.24 & 3938.97 & 2	& 2 & 10.71 & 10.71 & 10.71 & 0\% & 0\% \\
      tc1c15s3cf4 & 60.42 & 268.63 & 2 & 2 & 11.80 & 11.80 & 11.80 & 0\% & 0\% \\
      tc1c15s3ct1 & 24.30 & 4404.25 & 1	& 1 & 7.75 & 7.75 & 7.75 & 0\% & 0\% \\
      tc1c15s3ct3 & 33.11 & 883.37 & 1 & 1 & 9.62 & 9.62 & 9.62 & 0\% & 0\% \\
      tc1c15s3ct4 & 50.50 & 1680.79 & 1	& 1 & 11.50 & 11.50 & 11.50 & 0\% & 0\% \\
      tc1c15s4cf1 & 24.69 & 6332.66 & 1	& 1 & 7.45 & 7.45 &	7.45 & 0\% & 0\% \\
      tc1c15s4cf3 & 40.10 & 4477.57 & 2	& 2 & 10.71 & 10.71 & 10.71 & 0\% & 0\% \\
      tc1c15s4cf4 & 74.77 & 431.53 & 2 & 2 & 11.80 & 11.80 & 11.80 & 0\% & 0\%\\
      tc1c15s4ct1 & 23.92 & 6833.78 & 1 & 1 & 7.75 & 7.75 & 7.75 & 0\% & 0\% \\
      tc1c15s4ct3 & 38.16 & 10800* & 1	& 1& 10.60 & 10.60 & 8.92 & 15.80\% & 0\% \\
      tc1c15s4ct4 & 34.06 & 128.83 & 1 & 1 & 11.34 & 11.34 & 11.34 & 0\% & 0\% \\
      tc2c15s3cf0 & 37.00 & 10800* & 1 & 1 & 9.30 &	9.30 & 6.55 & 29.52\% & 0\% \\
      tc2c15s3ct0 & 32.37 & 10800* & 1 & 1 & 9.60 & 9.60 & 6.42 & 33.13\% & 0\% \\
      tc2c15s4cf0 & 37.57 & 10800* & 1 & 1 & 9.30 & 9.30 & 6.55 & 29.57\% & 0\% \\
      tc2c15s4ct0 & 32.65 & 10800* & 1 & 1 & 9.26 & 9.26 & 6.39 & 30.96\% & 0\%\\
      \hline
  \end{tabular}}
  \label{tab_sevrp-deterministic_15}
\end{table}

\begin{table}[ht!]
  \centering
    \caption{20 customers - deterministic setting.}
    \resizebox{\textwidth}{!}{
  \begin{tabular}{|c|c|c|c|c|c|c|c|c|c|}
    \hline
    \multirow{3}{*}{Instance} & \multicolumn{2}{|c|}{Time (s)} & \multicolumn{2}{|c|}{Number of routes}  & \multicolumn{3}{|c|}{Objective} & \multicolumn{2}{|c|}{Gap} \\
    \cline{2-10}
    & \multirow{2}{*}{Algorithm}& \multirow{2}{*}{Model} & \multirow{2}{*}{Algorithm} & \multirow{2}{*}{Model} & \multirow{2}{*}{Algorithm} & \multicolumn{2}{|c|}{Model} & \multirow{2}{*}{Gurobi} & \multirow{2}{*}{Algorithm-Model}\\
    \cline{7-8}
    &  &  &  &  &  & Incumbent & LB &  & \\
    \hline
 tc0c20s3cf2 & 98.32 & 10800* & 2 & 2 & 13.24 & 13.24 & 12.41 & 6.28\% & 0\% \\
    tc0c20s3ct2 & 71.52	& 8994.82 & 2 & 2 & 12.27 & 12.27 & 12.27 & 0\% & 0\% \\
    tc0c20s4cf2 & 104.51 & 10800* & 2 & 2 & 13.10 & 13.10 & 11.76 & 10.19\% & 0\%\\
    tc0c20s4ct2 & 75.69 & 10790.63 & 2 & 2 & 12.20 & 12.20 & 12.20 & 0\% & 0\% \\
    tc1c20s3cf1 & 117.35 & 10800* & 2 & 2 & 12.99 & 12.99 & 11.82 & 9.03\% & 0\% \\
    tc1c20s3cf3 & 68.04 & 10800* & 1 & 1 & 11.28 & 11.28 & 9.55 & 15.27\% & 0\% \\
    tc1c20s3cf4 & 133.06 & 10800* & 3 & 3 & 14.22 & 14.22 & 13.65 & 4.02\% & 0\% \\
    tc1c20s3ct1 & 120.25 & 10800* & 2 & 2 & 13.93 & 14.19 & 11.48 & 19.13\% & $-$1.86\%	\\
    tc1c20s3ct3 & 73.23 & 10800* & 1 & 1 & 10.11 & 10.11 & 9.21 & 8.92\% & 0\% \\
    tc1c20s3ct4 & 94.37	& 10800* & 2 & 2 & 14.12 & 14.12 & 13.63 & 3.44\% & 0\% \\
    tc1c20s4cf1 & 81.71 & 10800* & 1 & 1 & 12.15 & 12.29 & 10.18 & 17.12\% & $-$1.14\% \\	
    tc1c20s4cf3 & 69.05 & 10800* & 1 & 1 & 11.28 & 11.28 & 9.62 & 14.73\% & 0\% \\
    tc1c20s4cf4 & 151.06 & 10800* & 3 & 3 & 14.22 & 14.22 & 13.59 & 4.41\% & 0\% \\
    tc1c20s4ct1 & 86.74	& 10800* & 2 & 2 & 12.56 & 12.91 & 10.25 & 20.59\% & $-$2.70\% \\
    tc1c20s4ct3 & 63.89 & 10800* & 1 & 1 & 10.71 & 10.71 & 9.53 & 11.07\% & 0\% \\
    tc1c20s4ct4 & 101.93 & 10800* & 2 & 2 & 14.05 & 14.05 & 13.67 & 2.66\% & 0\% \\
    tc2c20s3cf0 & 82.92 & 10800* & 1 & 1 & 10.68 & 10.68 & 8.56 & 19.86\% & 0\% \\
    tc2c20s3ct0 & 71.89 & 10800* & 1 & 1 & 10.86 & 10.86 & 7.86 & 27.58\% & 0\% \\
    tc2c20s4cf0 & 88.40 & 10800* & 1 & 1 & 10.68 & 10.68 & 8.54 & 20.02\% & 0\% \\
    tc2c20s4ct0 & 75.89 & 10800* & 1 & 1 & 10.86 & 10.86 & 7.76 & 28.55\% & 0\% \\
      \hline
  \end{tabular}}
  \label{tab_sevrp-deterministic_20}
\end{table}

\clearpage
\section{Detailed results for instances with 10 customers -\\ stochastic setting} \label{sec_appendixB_stochastic}

In the following, we report the results of the numerical experiments for the instances with 10 customers in the stochastic setting. Specifically, we start by providing a comparison between ILS-SP heuristic (Algorithm~\ref{alg:ILS-P}) and Gurobi solver, in terms of CPU time, number of routes and optimality of the solution. We consider an increasing number of scenarios ($|\mathcal{S}| = \{5, 10, 20, 50\}$). The acronym ``LB'' stands for Lower Bound provided by Gurobi and the asterisk indicates that the time limit of 10800s has been reached.

Different probability distributions on the energy consumption are explored: uniform (see \ref{sec_appB_unif}), normal (see \ref{sec_appB_normal}), exponential (see \ref{sec_appB_expon}). Then, we report the detailed results of an in-sample stability analysis with respect to the size of the scenario tree (see \ref{sec_appB_insample}). Finally, we show the values of classic stochastic measures obtained in the case $|\mathcal{S}| = 20$ (see \ref{sec_appB_stochmeasure}).

\subsection{Uniform distribution} \label{sec_appB_unif}

\begin{table}[ht!]
  \centering
  \caption{10 customers - $|\mathcal{S}| = 5$ - uniform distribution.}
    \resizebox{\textwidth}{!}{
  \begin{tabular}{|c|c|c|c|c|c|c|c|c|c|}
    \hline
    \multirow{3}{*}{Instance} & \multicolumn{2}{|c|}{Time (s)} & \multicolumn{2}{|c|}{Number of routes}  & \multicolumn{3}{|c|}{Objective} & \multicolumn{2}{|c|}{Gap} \\
    \cline{2-10}
    & \multirow{2}{*}{Algorithm}& \multirow{2}{*}{Model} & \multirow{2}{*}{Algorithm} & \multirow{2}{*}{Model} & \multirow{2}{*}{Algorithm} & \multicolumn{2}{|c|}{Model} & \multirow{2}{*}{Gurobi} & \multirow{2}{*}{Algorithm-Model}\\
    \cline{7-8}
    &  &  &  &  &  & Incumbent & LB &  & \\
    \hline
tc0c10s2cf1	&	19.34	&	16.46	&	2	&	2	&	10.33	&	10.33	&	10.33	&	0\%	&	0\%	\\
tc0c10s2ct1	&	20.00	&	86.67	&	2	&	2	&	10.04	&	10.04	&	10.04	&	0\%	&	0\%	\\
tc0c10s3cf1	&	20.37	&	41.37	&	2	&	2	&	11.51	&	11.51	&	11.51	&	0\%	&	0\%	\\
tc0c10s3ct1	&	28.78	&	263.43	&	2	&	2	&	10.02	&	10.02	&	10.02	&	0\%	&	0\%	\\
tc1c10s2cf2	&	28.80	&	115.84	&	1	&	1	&	7.13	&	7.13	&	7.13	&	0\%	&	0\%	\\
tc1c10s2cf4	&	24.65	&	281.74	&	2	&	2	&	11.90	&	11.90	&	11.90	&	0\%	&	0\%	\\
tc1c10s2ct2	&	34.30	&	626.87	&	1	&	1	&	7.36	&	7.36	&	7.36	&	0\%	&	0\%	\\
tc1c10s2ct3	&	24.49	&	3913.02	&	1	&	1	&	10.24	&	10.24	&	10.24	&	0\%	&	0\%	\\
tc1c10s2ct4	&	29.63	&	383.82	&	2	&	2	&	11.95	&	11.95	&	11.95	&	0\%	&	0\%	\\
tc1c10s3cf2	&	37.12	&	283.98	&	2	&	2	&	7.43	&	7.43	&	7.43	&	0\%	&	0\%	\\
tc1c10s3cf4	&	30.08	&	1621.69	&	2	&	2	&	12.12	&	12.12	&	12.12	&	0\%	&	0\%	\\
tc1c10s3ct2	&	45.34	&	2103.30	&	2	&	2	&	7.55	&	7.55	&	7.55	&	0\%	&	0\%	\\
tc1c10s3ct3	&	33.99	&	10800*	&	1	&	1	&	9.73	&	9.73	&	8.66	&	10.95\%	&	0\%	\\
tc1c10s3ct4	&	37.95	&	10800*	&	2	&	2	&	12.13	&	12.13	&	11.97	&	1.34\%	&	0\%	\\
tc2c10s2cf0	&	19.73	&	9.31	&	2	&	2	&	7.75	&	7.75	&	7.75	&	0\%	&	0\%	\\
tc2c10s2ct0	&	37.98	&	1062.10	&	2	&	2	&	7.48	&	7.48	&	7.48	&	0\%	&	0\%	\\
tc2c10s3cf0	&	18.99	&	14.66	&	2	&	2	&	8.25	&	8.25	&	8.25	&	0\%	&	0\%	\\
tc2c10s3ct0	&	39.34	&	2436.22	&	2	&	2	&	7.57	&	7.57	&	7.57	&	0\%	&	0\%	\\
    \hline
  \end{tabular}}
  \label{tab_sevrp-stoch_c10_S5_unif}
\end{table}

\begin{table}[ht!]
  \centering
  \caption{10 customers - $|\mathcal{S}| = 10$ - uniform distribution.}
    \resizebox{\textwidth}{!}{
  \begin{tabular}{|c|c|c|c|c|c|c|c|c|c|}
    \hline
    \multirow{3}{*}{Instance} & \multicolumn{2}{|c|}{Time (s)} & \multicolumn{2}{|c|}{Number of routes}  & \multicolumn{3}{|c|}{Objective} & \multicolumn{2}{|c|}{Gap} \\
    \cline{2-10}
    & \multirow{2}{*}{Algorithm}& \multirow{2}{*}{Model} & \multirow{2}{*}{Algorithm} & \multirow{2}{*}{Model} & \multirow{2}{*}{Algorithm} & \multicolumn{2}{|c|}{Model} & \multirow{2}{*}{Gurobi} & \multirow{2}{*}{Algorithm-Model}\\
    \cline{7-8}
    &  &  &  &  &  & Incumbent & LB &  & \\
    \hline
tc0c10s2cf1	&	24.01	&	170.48	&	2	&	2	&	11.97	&	11.97	&	11.97	&	0\%	&	0\%	\\
tc0c10s2ct1	&	30.30	&	5747.90	&	2	&	2	&	9.99	&	9.99	&	9.99	&	0\%	&	0\%	\\
tc0c10s3cf1	&	27.15	&	309.56	&	2	&	2	&	12.27	&	12.27	&	12.27	&	0\%	&	0\%	\\
tc0c10s3ct1	&	48.58	&	10800	&	2	&	2	&	10.00	&	10.00	&	9.99	&	0.09\%	&	0\%	\\
tc1c10s2cf2	&	62.05	&	1603.15	&	2	&	2	&	7.44	&	7.44	&	7.44	&	0\%	&	0\%	\\
tc1c10s2cf4	&	42.33	&	10800*	&	2	&	2	&	12.14	&	12.14	&	11.88	&	2.19\%	&	0\%	\\
tc1c10s2ct2	&	71.35	&	10560.38	&	2	&	2	&	7.59	&	7.59	&	7.59	&	0\%	&	0\%	\\
tc1c10s2ct3	&	44.90	&	10800*	&	1	&	1	&	10.39	&	10.39	&	9.89	&	4.83\%	&	0\%	\\
tc1c10s2ct4	&	50.76	&	10800*	&	2	&	2	&	12.00	&	12.00	&	11.84	&	1.34\%	&	0\%	\\
tc1c10s3cf2	&	70.74	&	1600.98	&	2	&	2	&	7.51	&	7.51	&	7.51	&	0\%	&	0\%	\\
tc1c10s3cf4	&	50.97	&	10800*	&	2	&	2	&	12.00	&	12.00	&	11.41	&	4.91\%	&	0\%	\\
tc1c10s3ct2	&	84.34	&	10800*	&	2	&	2	&	7.71	&	7.73	&	7.13	&	7.74\%	&	$-0.19\%$		\\
tc1c10s3ct3	&	75.80	&	10800*	&	1	&	1	&	9.67	&	9.84	&	6.88	&	30.16\%	&	$-1.82\%$		\\
tc1c10s3ct4	&	64.80	&	10800*	&	2	&	2	&	12.10	&	12.10	&	11.10	&	8.29\%	&	0\%	\\
tc2c10s2cf0	&	29.15	&	658.23	&	2	&	2	&	8.02	&	8.02	&	8.02	&	0\%	&	0\%	\\
tc2c10s2ct0	&	73.36	&	10800*	&	2	&	2	&	7.51	&	7.51	&	7.34	&	2.27\%	&	0\%	\\
tc2c10s3cf0	&	28.25	&	384.49	&	2	&	2	&	8.15	&	8.15	&	8.15	&	0\%	&	0\%	\\
tc2c10s3ct0	&	65.38	&	10800*	&	2	&	2	&	7.55	&	7.63	&	6.69	&	12.34\%	&	$-1.04\%$		\\
    \hline
  \end{tabular}}
  \label{tab_sevrp-stoch_c10_S10_unif}
\end{table}

\begin{table}[ht!]
  \centering
  \caption{10 customers - $|\mathcal{S}| = 20$ - uniform distribution.}
    \resizebox{\textwidth}{!}{
  \begin{tabular}{|c|c|c|c|c|c|c|c|c|c|}
    \hline
    \multirow{3}{*}{Instance} & \multicolumn{2}{|c|}{Time (s)} & \multicolumn{2}{|c|}{Number of routes}  & \multicolumn{3}{|c|}{Objective} & \multicolumn{2}{|c|}{Gap} \\
    \cline{2-10}
    & \multirow{2}{*}{Algorithm}& \multirow{2}{*}{Model} & \multirow{2}{*}{Algorithm} & \multirow{2}{*}{Model} & \multirow{2}{*}{Algorithm} & \multicolumn{2}{|c|}{Model} & \multirow{2}{*}{Gurobi} & \multirow{2}{*}{Algorithm-Model}\\
    \cline{7-8}
    &  &  &  &  &  & Incumbent & LB &  & \\
    \hline
tc0c10s2cf1	&	39.84	&	10800*	&	2	&	2	&	12.42	&	12.42	&	12.18	&	1.90\%	&	0\%	\\
tc0c10s2ct1	&	50.44	&	10800*	&	2	&	2	&	9.98	&	9.98	&	9.94	&	0.41\%	&	0\%	\\
tc0c10s3cf1	&	38.86	&	10800*	&	2	&	2	&	12.30	&	12.30	&	11.77	&	4.31\%	&	0\%	\\
tc0c10s3ct1	&	77.55	&	10800*	&	2	&	2	&	10.00	&	10.00	&	9.95	&	0.46\%	&	0\%	\\
tc1c10s2cf2	&	108.03	&	10800*	&	2	&	2	&	7.52	&	7.52	&	7.33	&	2.49\%	&	0\%	\\
tc1c10s2cf4	&	74.11	&	10800*	&	2	&	2	&	12.21	&	12.21	&	11.56	&	5.32\%	&	0\%	\\
tc1c10s2ct2	&	135.72	&	10800*	&	2	&	2	&	7.73	&	7.73	&	6.89	&	10.84\%	&	0\%	\\
tc1c10s2ct3	&	68.96	&	10800*	&	1	&	1	&	10.44	&	10.44	&	7.35	&	29.63\%	&	0\%	\\
tc1c10s2ct4	&	81.00	&	10800*	&	2	&	2	&	12.01	&	12.01	&	11.80	&	1.73\%	&	0\%	\\
tc1c10s3cf2	&	123.01	&	10800*	&	2	&	2	&	7.64	&	7.64	&	7.49	&	2.03\%	&	0\%	\\
tc1c10s3cf4	&	89.40	&	10800*	&	2	&	2	&	12.08	&	12.08	&	10.48	&	13.29\%	&	0\%	\\
tc1c10s3ct2	&	159.62	&	10800*	&	2	&	2	&	7.69	&	7.74	&	6.68	&	13.68\%	&	$-0.65\%$		\\
tc1c10s3ct3	&	115.46	&	10800*	&	1	&	1	&	9.91	&	10.67	&	6.18	&	42.05\%	&	$-7.09\%$		\\
tc1c10s3ct4	&	103.33	&	10800*	&	2	&	2	&	12.05	&	12.05	&	10.21	&	15.29\%	&	0\%	\\
tc2c10s2cf0	&	47.07	&	10800*	&	2	&	2	&	8.09	&	8.09	&	7.41	&	8.34\%	&	0\%	\\
tc2c10s2ct0	&	132.98	&	10800*	&	2	&	2	&	7.52	&	7.52	&	5.52	&	26.55\%	&	0\%	\\
tc2c10s3cf0	&	47.50	&	10800*	&	3	&	3	&	11.06	&	11.06	&	10.44	&	5.59\%	&	0\%	\\
tc2c10s3ct0	&	134.02	&	10800*	&	2	&	2	&	7.60	&	7.82	&	5.06	&	35.29\%	&	$-2.89\%$		\\
    \hline
  \end{tabular}}
  \label{tab_sevrp-stoch_c10_S20_unif}
\end{table}

\begin{table}[ht!]
  \centering
  \caption{10 customers - $|\mathcal{S}| = 50$ - uniform distribution.}
    \resizebox{\textwidth}{!}{
  \begin{tabular}{|c|c|c|c|c|c|c|c|c|c|}
    \hline
    \multirow{3}{*}{Instance} & \multicolumn{2}{|c|}{Time (s)} & \multicolumn{2}{|c|}{Number of routes}  & \multicolumn{3}{|c|}{Objective} & \multicolumn{2}{|c|}{Gap} \\
    \cline{2-10}
    & \multirow{2}{*}{Algorithm}& \multirow{2}{*}{Model} & \multirow{2}{*}{Algorithm} & \multirow{2}{*}{Model} & \multirow{2}{*}{Algorithm} & \multicolumn{2}{|c|}{Model} & \multirow{2}{*}{Gurobi} & \multirow{2}{*}{Algorithm-Model}\\
    \cline{7-8}
    &  &  &  &  &  & Incumbent & LB &  & \\
    \hline
tc0c10s2cf1	&	84.67	&	10800*	&	2	&	2	&	12.40	&	12.40	&	11.66	&	5.97\%	&	0\%	\\
tc0c10s2ct1	&	115.79	&	10800*	&	2	&	2	&	10.02	&	10.02	&	9.86	&	1.56\%	&	0\%	\\
tc0c10s3cf1	&	79.99	&	10800*	&	2	&	2	&	12.24	&	12.24	&	11.79	&	3.65\%	&	0\%	\\
tc0c10s3ct1	&	162.68	&	10800*	&	2	&	2	&	10.20	&	12.39	&	8.60	&	30.56\%	&	$-17.63\%$		\\
tc1c10s2cf2	&	274.73	&	10800*	&	2	&	2	&	7.84	&	7.60	&	7.00	&	7.87\%	&	$3.16\%$		\\
tc1c10s2cf4	&	164.11	&	10800*	&	2	&	2	&	12.22	&	12.22	&	10.94	&	10.51\%	&	0\%	\\
tc1c10s2ct2	&	350.54	&	10800*	&	2	&	2	&	7.75	&	9.56	&	6.04	&	36.82\%	&	$-18.83\%$		\\
tc1c10s2ct3	&	159.61	&	10800*	&	1	&	2	&	10.47	&	12.95	&	6.13	&	52.68\%	&	$-19.13\%$		\\
tc1c10s2ct4	&	183.76	&	10800*	&	2	&	2	&	12.04	&	12.04	&	10.79	&	10.42\%	&	0\%	\\
tc1c10s3cf2	&	267.07	&	10800*	&	2	&	2	&	7.63	&	7.63	&	6.99	&	8.34\%	&	$-0.02\%$	\\
tc1c10s3cf4	&	183.45	&	10800*	&	2	&	2	&	12.11	&	12.79	&	9.92	&	22.39\%	&	$-5.30\%$	\\
tc1c10s3ct2	&	389.91	&	10800*	&	2	&	2	&	7.77	&	9.01	&	5.45	&	39.58\%	&	$-12.53\%$		\\
tc1c10s3ct3	&	275.79	&	10800*	&	1	&	1	&	10.08	&	11.85	&	5.23	&	55.88\%	&	$-14.95\%$		\\
tc1c10s3ct4	&	215.45	&	10800*	&	1	&	2	&	12.05	&	12.86	&	9.26	&	28.01\%	&	$-5.90\%$		\\
tc2c10s2cf0	&	104.34	&	10800*	&	3	&	3	&	11.14	&	11.14	&	10.07	&	9.59\%	&	0\%	\\
tc2c10s2ct0	&	310.73	&	10800*	&	2	&	2	&	7.65	&	8.17	&	4.33	&	47.00\%	&	$-6.09\%$		\\
tc2c10s3cf0	&	111.19	&	10800*	&	3	&	3	&	11.07	&	11.07	&	10.02	&	9.45\%	&	0\%	\\
tc2c10s3ct0	&	317.84	&	10800*	&	2	&	2	&	7.68	&	7.78	&	3.51	&	54.82\%	&	$-1.28\%$		\\
    \hline
  \end{tabular}}
  \label{tab_sevrp-stoch_c10_S50_unif}
\end{table}

\clearpage
\subsection{Normal distribution} \label{sec_appB_normal}

\begin{table}[ht!]
  \centering
  \caption{10 customers - $|\mathcal{S}| = 5$ - normal distribution.}
    \resizebox{\textwidth}{!}{
  \begin{tabular}{|c|c|c|c|c|c|c|c|c|c|}
    \hline
    \multirow{3}{*}{Instance} & \multicolumn{2}{|c|}{Time (s)} & \multicolumn{2}{|c|}{Number of routes}  & \multicolumn{3}{|c|}{Objective} & \multicolumn{2}{|c|}{Gap} \\
    \cline{2-10}
    & \multirow{2}{*}{Algorithm}& \multirow{2}{*}{Model} & \multirow{2}{*}{Algorithm} & \multirow{2}{*}{Model} & \multirow{2}{*}{Algorithm} & \multicolumn{2}{|c|}{Model} & \multirow{2}{*}{Gurobi} & \multirow{2}{*}{Algorithm-Model}\\
    \cline{7-8}
    &  &  &  &  &  & Incumbent & LB &  & \\
    \hline
tc0c10s2cf1	&	16.46	&	19.54	&	2	&	2	&	11.68	&	11.68	&	11.68	&	0\%	&	0\%	\\
tc0c10s2ct1	&	21.11	&	72.47	&	2	&	2	&	9.84	&	9.84	&	9.84	&	0\%	&	0\%	\\
tc0c10s3cf1	&	18.28	&	36.73	&	2	&	2	&	11.51	&	11.51	&	11.51	&	0\%	&	0\%	\\
tc0c10s3ct1	&	33.38	&	344.20	&	2	&	2	&	9.94	&	9.94	&	9.94	&	0\%	&	0\%	\\
tc1c10s2cf2	&	35.56	&	243.62	&	1	&	1	&	7.14	&	7.14	&	7.14	&	0\%	&	0\%	\\
tc1c10s2cf4	&	25.24	&	377.06	&	2	&	2	&	12.02	&	12.02	&	12.02	&	0\%	&	0\%	\\
tc1c10s2ct2	&	38.41	&	861.75	&	1	&	1	&	7.28	&	7.28	&	7.28	&	0\%	&	0\%	\\
tc1c10s2ct3	&	25.91	&	3789.47	&	1	&	1	&	10.83	&	10.83	&	10.83	&	0\%	&	0\%	\\
tc1c10s2ct4	&	29.44	&	302.51	&	2	&	2	&	12.07	&	12.07	&	12.07	&	0\%	&	0\%	\\
tc1c10s3cf2	&	36.73	&	250.19	&	1	&	1	&	7.19	&	7.19	&	7.19	&	0\%	&	0\%	\\
tc1c10s3cf4	&	31.10	&	2289.60	&	3	&	3	&	12.22	&	12.22	&	12.22	&	0\%	&	0\%	\\
tc1c10s3ct2	&	35.18	&	1233.05	&	1	&	1	&	7.39	&	7.39	&	7.39	&	0\%	&	0\%	\\
tc1c10s3ct3	&	37.96	&	10800* &	1	&	1	&	9.86	&	9.86	&	9.05	&	8.19\%	& 0\%	\\
tc1c10s3ct4	&	34.60	&	1946.13	&	2	&	2	&	11.91	&	11.91	&	11.91	&	0\%	&	0\%	\\
tc2c10s2cf0	&	19.05	&	11.82	&	2	&	2	&	8.25	&	8.25	&	8.25	&	0\%	&	0\%	\\
tc2c10s2ct0	&	35.28	&	916.66	&	2	&	2	&	7.55	&	7.55	&	7.55	&	0\%	&	0\%	\\
tc2c10s3cf0	&	19.19	&	10.06	&	2	&	2	&	8.22	&	8.22	&	8.22	&	0\%	&	0\%	\\
tc2c10s3ct0	&	40.45	&	2473.44	&	2	&	2	&	7.58	&	7.58	&	7.58	&	0\%	&	0\%	\\
    \hline
  \end{tabular}}
  \label{tab_sevrp-stoch_c10_S5_normal}
\end{table}

\begin{table}[ht!]
  \centering
  \caption{10 customers - $|\mathcal{S}| = 10$ - normal distribution.}
    \resizebox{\textwidth}{!}{
  \begin{tabular}{|c|c|c|c|c|c|c|c|c|c|}
    \hline
    \multirow{3}{*}{Instance} & \multicolumn{2}{|c|}{Time (s)} & \multicolumn{2}{|c|}{Number of routes}  & \multicolumn{3}{|c|}{Objective} & \multicolumn{2}{|c|}{Gap} \\
    \cline{2-10}
    & \multirow{2}{*}{Algorithm}& \multirow{2}{*}{Model} & \multirow{2}{*}{Algorithm} & \multirow{2}{*}{Model} & \multirow{2}{*}{Algorithm} & \multicolumn{2}{|c|}{Model} & \multirow{2}{*}{Gurobi} & \multirow{2}{*}{Algorithm-Model}\\
    \cline{7-8}
    &  &  &  &  &  & Incumbent & LB &  & \\
    \hline
tc0c10s2cf1	&	25.90	&	84.80	&	2	&	2	&	11.82	&	11.82	&	11.82	&	0\%	&	0\%	\\
tc0c10s2ct1	&	31.95	&	573.66	&	2	&	2	&	9.84	&	9.84	&	9.84	&	0\%	&	0\%	\\
tc0c10s3cf1	&	27.95	&	2472.73	&	2	&	2	&	11.82	&	11.82	&	11.82	&	0\%	&	0\%	\\
tc0c10s3ct1	&	57.71	&	10800	&	2	&	2	&	10.08	&	10.08	&	10.07	&	0.11\%	&	0\%	\\
tc1c10s2cf2	&	60.75	&	708.13	&	1	&	1	&	7.37	&	7.37	&	7.37	&	0\%	&	0\%	\\
tc1c10s2cf4	&	43.56	&	10800*	&	2	&	2	&	12.34	&	12.34	&	12.15	&	1.60\%	&	0\%	\\
tc1c10s2ct2	&	80.66	&	10800*	&	2	&	2	&	7.60	&	7.60	&	7.43	&	2.28\%	&	0\%	\\
tc1c10s2ct3	&	44.34	&	10800*	&	1	&	1	&	10.57	&	10.57	&	9.89	&	6.43\%	&	0\%	\\
tc1c10s2ct4	&	49.14	&	10800*	&	2	&	2	&	12.02	&	12.02	&	11.93	&	0.76\%	&	0\%	\\
tc1c10s3cf2	&	58.36	&	1153.97	&	1	&	1	&	7.17	&	7.17	&	7.17	&	0\%	&	0\%	\\
tc1c10s3cf4	&	50.25	&	10800*	&	2	&	2	&	12.24	&	12.24	&	11.11	&	9.21\%	&	0\%	\\
tc1c10s3ct2	&	62.68	&	10800*	&	1	&	1	&	7.37	&	7.37	&	7.33	&	0.56\%	&	0\%	\\
tc1c10s3ct3	&	76.17	&	10800*	&	1	&	1	&	10.06	&	10.33	&	6.91	&	33.13\%	&	$-2.57\%$		\\
tc1c10s3ct4	&	59.70	&	10800*	&	2	&	2	&	11.99	&	11.99	&	10.88	&	9.22\%	&	0\%	\\
tc2c10s2cf0	&	26.79	&	622.84	&	2	&	2	&	8.59	&	8.59	&	8.59	&	0\%	&	0\%	\\
tc2c10s2ct0	&	65.50	&	10800*	&	2	&	2	&	7.55	&	7.55	&	7.41	&	1.80\%	&	0\%	\\
tc2c10s3cf0	&	28.20	&	398.56	&	2	&	2	&	8.24	&	8.24	&	8.24	&	0\%	&	0\%	\\
tc2c10s3ct0	&	72.29	&	10800*	&	2	&	2	&	7.58	&	7.71	&	6.44	&	16.45\%	&	$-1.68\%$		\\
    \hline
  \end{tabular}}
  \label{tab_sevrp-stoch_c10_S10_normal}
\end{table}

\begin{table}[ht!]
  \centering
  \caption{10 customers - $|\mathcal{S}| = 20$ - normal distribution.}
    \resizebox{\textwidth}{!}{
  \begin{tabular}{|c|c|c|c|c|c|c|c|c|c|}
    \hline
    \multirow{3}{*}{Instance} & \multicolumn{2}{|c|}{Time (s)} & \multicolumn{2}{|c|}{Number of routes}  & \multicolumn{3}{|c|}{Objective} & \multicolumn{2}{|c|}{Gap} \\
    \cline{2-10}
    & \multirow{2}{*}{Algorithm}& \multirow{2}{*}{Model} & \multirow{2}{*}{Algorithm} & \multirow{2}{*}{Model} & \multirow{2}{*}{Algorithm} & \multicolumn{2}{|c|}{Model} & \multirow{2}{*}{Gurobi} & \multirow{2}{*}{Algorithm-Model}\\
    \cline{7-8}
    &  &  &  &  &  & Incumbent & LB &  & \\
    \hline
tc0c10s2cf1	&	38.78	&	10800*	&	2	&	2	&	11.99	&	11.99	&	11.97	&	0.13\%	&	0\%	\\
tc0c10s2ct1	&	53.79	&	10800*	&	2	&	2	&	9.91	&	9.91	&	9.89	&	0.18\%	&	0\%	\\
tc0c10s3cf1	&	36.44	&	10800*	&	2	&	2	&	12.13	&	11.96	&	11.88	&	0.73\%	&	1.35\%		\\
tc0c10s3ct1	&	99.69	&	10800*	&	2	&	2	&	10.01	&	10.01	&	9.97	&	0.40\%	&	0\%	\\
tc1c10s2cf2	&	122.20	&	10800*	&	2	&	2	&	7.59	&	7.59	&	7.28	&	4.01\%	&	0\%	\\
tc1c10s2cf4	&	74.79	&	10800*	&	2	&	2	&	12.18	&	12.18	&	11.45	&	6.03\%	&	0\%	\\
tc1c10s2ct2	&	170.25	&	10800*	&	2	&	2	&	7.67	&	7.71	&	6.80	&	11.81\%	&	$-0.54\%$		\\
tc1c10s2ct3	&	78.43	&	10800*	&	1	&	1	&	10.41	&	10.44	&	7.34	&	29.66\%	&	$-0.28\%$		\\
tc1c10s2ct4	&	86.48	&	10800*	&	2	&	2	&	12.05	&	12.05	&	11.73	&	2.67\%	&	0\%	\\
tc1c10s3cf2	&	122.50	&	10800*	&	2	&	2	&	7.56	&	7.56	&	7.39	&	2.27\%	&	0\%	\\
tc1c10s3cf4	&	92.91	&	10800*	&	2	&	2	&	12.09	&	12.09	&	11.20	&	7.33\%	&	0\%	\\
tc1c10s3ct2	&	174.34	&	10800*	&	2	&	2	&	7.77	&	7.86	&	6.47	&	17.67\%	&	$-1.10\%$		\\
tc1c10s3ct3	&	125.17	&	10800*	&	1	&	1	&	10.05	&	10.64	&	6.36	&	40.23\%	&	$-5.54\%$		\\
tc1c10s3ct4	&	98.57	&	10800*	&	2	&	2	&	12.10	&	12.10	&	10.38	&	14.21\%	&	0\%	\\
tc2c10s2cf0	&	45.46	&	10800*	&	3	&	3	&	11.05	&	11.05	&	10.57	&	4.32\%	&	0\%	\\
tc2c10s2ct0	&	138.97	&	10800*	&	2	&	2	&	7.55	&	7.61	&	5.47	&	28.10\%	&	$-0.78\%$		\\
tc2c10s3cf0	&	47.94	&	10800*	&	3	&	3	&	11.14	&	11.14	&	10.41	&	6.52\%	&	0\%	\\
tc2c10s3ct0	&	130.82	&	10800*	&	2	&	2	&	7.62	&	8.03	&	5.29	&	34.16\%	&	$-5.15\%$		\\
    \hline
  \end{tabular}}
  \label{tab_sevrp-stoch_c10_S20_normal}
\end{table}

\begin{table}[ht!]
  \centering
  \caption{10 customers - $|\mathcal{S}| = 50$ - normal distribution.}
    \resizebox{\textwidth}{!}{
  \begin{tabular}{|c|c|c|c|c|c|c|c|c|c|}
    \hline
    \multirow{3}{*}{Instance} & \multicolumn{2}{|c|}{Time (s)} & \multicolumn{2}{|c|}{Number of routes}  & \multicolumn{3}{|c|}{Objective} & \multicolumn{2}{|c|}{Gap} \\
    \cline{2-10}
    & \multirow{2}{*}{Algorithm}& \multirow{2}{*}{Model} & \multirow{2}{*}{Algorithm} & \multirow{2}{*}{Model} & \multirow{2}{*}{Algorithm} & \multicolumn{2}{|c|}{Model} & \multirow{2}{*}{Gurobi} & \multirow{2}{*}{Algorithm-Model}\\
    \cline{7-8}
    &  &  &  &  &  & Incumbent & LB &  & \\
    \hline
tc0c10s2cf1	&	83.88	&	10800*	&	2	&	2	&	12.11	&	12.11	&	11.40	&	5.89\%	&	0\%	\\
tc0c10s2ct1	&	109.10	&	10800*	&	2	&	2	&	10.11	&	10.11	&	9.92	&	1.89\%	&	0\%	\\
tc0c10s3cf1	&	79.29	&	10800*	&	2	&	2	&	12.12	&	12.12	&	11.87	&	2.03\%	&	0\%	\\
tc0c10s3ct1	&	201.46	&	10800*	&	2	&	1	&	10.17	&	11.88	&	8.54	&	28.09\%	&	$-14.44\%$		\\
tc1c10s2cf2	&	290.62	&	10800*	&	2	&	2	&	7.61	&	7.62	&	7.04	&	7.63\%	&	$-0.09\%$	\\
tc1c10s2cf4	&	169.22	&	10800*	&	2	&	2	&	12.06	&	12.06	&	10.96	&	9.19\%	&	0\%	\\
tc1c10s2ct2	&	365.59	&	10800*	&	2	&	2	&	7.76	&	8.50	&	5.94	&	30.18\%	&	$-8.34\%$		\\
tc1c10s2ct3	&	153.56	&	10800*	&	1	&	1	&	11.26	&	12.81	&	6.09	&	52.48\%	&	$-12.08\%$		\\
tc1c10s2ct4	&	176.25	&	10800*	&	2	&	2	&	12.08	&	12.08	&	11.14	&	7.82\%	&	0\%	\\
tc1c10s3cf2	&	277.53	&	10800*	&	2	&	2	&	7.61	&	7.61	&	7.14	&	6.26\%	&	0\%	\\
tc1c10s3cf4	&	212.29	&	10800*	&	2	&	3	&	12.16	&	12.50	&	10.19	&	18.46\%	&	$-2.73\%$		\\
tc1c10s3ct2	&	347.34	&	10800*	&	2	&	2	&	7.77	&	8.63	&	5.52	&	36.09\%	&	$-9.03\%$		\\
tc1c10s3ct3	&	272.49	&	10800*	&	1	&	2	&	10.07	&	15.95	&	5.24	&	67.16\%	&	$-34.81\%$		\\
tc1c10s3ct4	&	232.08	&	10800*	&	2	&	1	&	12.09	&	12.81	&	9.26	&	27.67\%	&	$-5.60\%$		\\
tc2c10s2cf0	&	100.81	&	10800*	&	3	&	3	&	11.07	&	11.07	&	9.99	&	9.76\%	&	0\%	\\
tc2c10s2ct0	&	313.85	&	10800*	&	2	&	2	&	7.62	&	7.80	&	4.69	&	39.86\%	&	$-2.29\%$		\\
tc2c10s3cf0	&	112.86	&	10800*	&	3	&	3	&	11.23	&	11.23	&	9.99	&	11.05\%	&	0\%	\\
tc2c10s3ct0	&	315.33	&	10800*	&	2	&	1	&	7.59	&	8.03	&	3.73	&	53.52\%	&	$-5.38\%$		\\
    \hline
  \end{tabular}}
  \label{tab_sevrp-stoch_c10_S50_normal}
\end{table}

\clearpage
\subsection{Exponential distribution} \label{sec_appB_expon}

\begin{table}[ht!]
  \centering
  \caption{10 customers - $|\mathcal{S}| = 5$ - exponential distribution.}
    \resizebox{\textwidth}{!}{
  \begin{tabular}{|c|c|c|c|c|c|c|c|c|c|}
    \hline
    \multirow{3}{*}{Instance} & \multicolumn{2}{|c|}{Time (s)} & \multicolumn{2}{|c|}{Number of routes}  & \multicolumn{3}{|c|}{Objective} & \multicolumn{2}{|c|}{Gap} \\
    \cline{2-10}
    & \multirow{2}{*}{Algorithm}& \multirow{2}{*}{Model} & \multirow{2}{*}{Algorithm} & \multirow{2}{*}{Model} & \multirow{2}{*}{Algorithm} & \multicolumn{2}{|c|}{Model} & \multirow{2}{*}{Gurobi} & \multirow{2}{*}{Algorithm-Model}\\
    \cline{7-8}
    &  &  &  &  &  & Incumbent & LB &  & \\
    \hline
tc0c10s2cf1	&	16.93	&	18.53	&	2	&	2	&	11.85	&	11.85	&	11.85	&	0\%	&	0\%	\\
tc0c10s2ct1	&	20.68	&	87.77	&	2	&	2	&	9.96	&	9.96	&	9.96	&	0\%	&	0\%	\\
tc0c10s3cf1	&	18.00	&	22.86	&	2	&	2	&	11.67	&	11.67	&	11.67	&	0\%	&	0\%	\\
tc0c10s3ct1	&	39.64	&	326.96	&	2	&	2	&	9.94	&	9.94	&	9.94	&	0\%	&	0\%	\\
tc1c10s2cf2	&	29.53	&	144.33	&	1	&	1	&	7.12	&	7.12	&	7.12	&	0\%	&	0\%	\\
tc1c10s2cf4	&	26.84	&	436.43	&	2	&	2	&	12.15	&	12.15	&	12.15	&	0\%	&	0\%	\\
tc1c10s2ct2	&	37.81	&	816.91	&	1	&	1	&	7.34	&	7.34	&	7.34	&	0\%	&	0\%	\\
tc1c10s2ct3	&	28.53	&	5602.28	&	1	&	1	&	10.76	&	10.76	&	10.76	&	0\%	&	0\%	\\
tc1c10s2ct4	&	26.13	&	133.95	&	2	&	2	&	12.08	&	12.08	&	12.08	&	0\%	&	0\%	\\
tc1c10s3cf2	&	31.46	&	140.90	&	1	&	1	&	7.12	&	7.12	&	7.12	&	0\%	&	0\%	\\
tc1c10s3cf4	&	33.29	&	1277.58	&	2	&	2	&	12.16	&	12.16	&	12.16	&	0\%	&	0\%	\\
tc1c10s3ct2	&	32.40	&	1739.31	&	1	&	1	&	7.37	&	7.37	&	7.37	&	0\%	&	0\%	\\
tc1c10s3ct3	&	40.71	&	10800*	&	1	&	1	&	9.88	&	9.88	&	7.78	&	21.25\%	&	0\%	\\
tc1c10s3ct4	&	34.68	&	3114.16	&	1	&	1	&	12.02	&	12.02	&	12.02	&	0\%	&	0\%	\\
tc2c10s2cf0	&	18.52	&	9.84	&	2	&	2	&	7.79	&	7.79	&	7.79	&	0\%	&	0\%	\\
tc2c10s2ct0	&	34.08	&	1228.23	&	2	&	2	&	7.50	&	7.50	&	7.50	&	0\%	&	0\%	\\
tc2c10s3cf0	&	19.12	&	14.33	&	2	&	2	&	7.97	&	7.97	&	7.97	&	0\%	&	0\%	\\
tc2c10s3ct0	&	35.27	&	2420.12	&	2	&	2	&	7.46	&	7.46	&	7.46	&	0\%	&	0\%	\\
    \hline
  \end{tabular}}
  \label{tab_sevrp-stoch_c10_S5_exponential}
\end{table}

\begin{table}[ht!]
  \centering
  \caption{10 customers - $|\mathcal{S}| = 10$ - exponential distribution.}
    \resizebox{\textwidth}{!}{
  \begin{tabular}{|c|c|c|c|c|c|c|c|c|c|}
    \hline
    \multirow{3}{*}{Instance} & \multicolumn{2}{|c|}{Time (s)} & \multicolumn{2}{|c|}{Number of routes}  & \multicolumn{3}{|c|}{Objective} & \multicolumn{2}{|c|}{Gap} \\
    \cline{2-10}
    & \multirow{2}{*}{Algorithm}& \multirow{2}{*}{Model} & \multirow{2}{*}{Algorithm} & \multirow{2}{*}{Model} & \multirow{2}{*}{Algorithm} & \multicolumn{2}{|c|}{Model} & \multirow{2}{*}{Gurobi} & \multirow{2}{*}{Algorithm-Model}\\
    \cline{7-8}
    &  &  &  &  &  & Incumbent & LB &  & \\
    \hline
tc0c10s2cf1	&	24.20	&	182.87	&	2	&	2	&	11.84	&	11.84	&	11.84	&	0\%	&	0\%	\\
tc0c10s2ct1	&	34.89	&	9119.73	&	2	&	2	&	9.94	&	9.94	&	9.94	&	0\%	&	0\%	\\
tc0c10s3cf1	&	26.88	&	239.33	&	2	&	2	&	11.84	&	11.84	&	11.84	&	0\%	&	0\%	\\
tc0c10s3ct1	&	50.49	&	10800	&	2	&	2	&	10.01	&	10.01	&	10.01	&	0.04\%	&	0\%	\\
tc1c10s2cf2	&	57.31	&	1572.35	&	1	&	1	&	7.17	&	7.17	&	7.17	&	0\%	&	0\%	\\
tc1c10s2cf4	&	41.89	&	10800*	&	2	&	2	&	12.01	&	12.01	&	11.75	&	2.21\%	&	0\%	\\
tc1c10s2ct2	&	74.09	&	4604.89	&	1	&	1	&	7.36	&	7.36	&	7.36	&	0\%	&	0\%	\\
tc1c10s2ct3	&	41.61	&	10800*	&	1	&	1	&	10.32	&	10.32	&	9.29	&	9.99\%	&	0\%	\\
tc1c10s2ct4	&	43.42	&	10800*	&	2	&	2	&	12.17	&	12.17	&	11.92	&	2.06\%	&	0\%	\\
tc1c10s3cf2	&	56.56	&	507.94	&	1	&	1	&	7.14	&	7.14	&	7.14	&	0\%	&	0\%	\\
tc1c10s3cf4	&	50.81	&	10800*	&	2	&	2	&	12.20	&	12.20	&	11.32	&	7.24\%	&	0\%	\\
tc1c10s3ct2	&	64.47	&	10800*	&	1	&	1	&	7.45	&	7.45	&	7.12	&	4.38\%	&	0\%	\\
tc1c10s3ct3	&	69.16	&	10800*	&	1	&	1	&	10.00	&	10.47	&	6.73	&	35.70\%	&	$-4.45\%$		\\
tc1c10s3ct4	&	66.45	&	10800*	&	2	&	2	&	12.02	&	12.02	&	11.23	&	6.57\%	&	0\%	\\
tc2c10s2cf0	&	28.11	&	139.37	&	2	&	2	&	8.16	&	8.16	&	8.16	&	0\%	&	0\%	\\
tc2c10s2ct0	&	62.22	&	10800*	&	2	&	2	&	7.54	&	7.54	&	7.21	&	4.32\%	&	0\%	\\
tc2c10s3cf0	&	29.53	&	575.31	&	2	&	2	&	8.01	&	8.01	&	8.01	&	0\%	&	0\%	\\
tc2c10s3ct0	&	69.14	&	10800*	&	2	&	2	&	7.47	&	7.47	&	6.90	&	7.64\%	&	0\%	\\
    \hline
  \end{tabular}}
  \label{tab_sevrp-stoch_c10_S10_exponential}
\end{table}

\begin{table}[ht!]
  \centering
  \caption{10 customers - $|\mathcal{S}| = 20$ - exponential distribution.}
    \resizebox{\textwidth}{!}{
  \begin{tabular}{|c|c|c|c|c|c|c|c|c|c|}
    \hline
    \multirow{3}{*}{Instance} & \multicolumn{2}{|c|}{Time (s)} & \multicolumn{2}{|c|}{Number of routes}  & \multicolumn{3}{|c|}{Objective} & \multicolumn{2}{|c|}{Gap} \\
    \cline{2-10}
    & \multirow{2}{*}{Algorithm}& \multirow{2}{*}{Model} & \multirow{2}{*}{Algorithm} & \multirow{2}{*}{Model} & \multirow{2}{*}{Algorithm} & \multicolumn{2}{|c|}{Model} & \multirow{2}{*}{Gurobi} & \multirow{2}{*}{Algorithm-Model}\\
    \cline{7-8}
    &  &  &  &  &  & Incumbent & LB &  & \\
    \hline
tc0c10s2cf1	&	38.54	&	10800*	&	2	&	2	&	11.86	&	11.86	&	11.85	&	0.15\%	&	0\%	\\
tc0c10s2ct1	&	51.73	&	10800*	&	2	&	2	&	9.97	&	9.97	&	9.93	&	0.40\%	&	0\%	\\
tc0c10s3cf1	&	40.92	&	10800*	&	2	&	2	&	12.08	&	12.08	&	11.70	&	3.10\%	&	0\%	\\
tc0c10s3ct1	&	83.79	&	10800*	&	2	&	2	&	9.96	&	9.96	&	9.89	&	0.69\%	&	0\%	\\
tc1c10s2cf2	&	107.00	&	10800*	&	1	&	1	&	7.18	&	7.18	&	6.93	&	3.50\%	&	0\%	\\
tc1c10s2cf4	&	72.84	&	10800*	&	2	&	2	&	12.08	&	12.08	&	11.32	&	6.25\%	&	0\%	\\
tc1c10s2ct2	&	137.79	&	10800*	&	1	&	1	&	7.41	&	7.41	&	6.91	&	6.82\%	&	0\%	\\
tc1c10s2ct3	&	69.36	&	10800*	&	1	&	1	&	10.78	&	10.78	&	6.92	&	35.76\%	&	0\%	\\
tc1c10s2ct4	&	81.84	&	10800*	&	2	&	2	&	12.08	&	12.08	&	11.38	&	5.82\%	&	0\%	\\
tc1c10s3cf2	&	113.55	&	10800*	&	2	&	2	&	7.61	&	7.61	&	7.47	&	1.92\%	&	0\%	\\
tc1c10s3cf4	&	84.10	&	10800*	&	2	&	2	&	12.14	&	12.14	&	11.06	&	8.91\%	&	0\%		\\
tc1c10s3ct2	&	158.51	&	10800*	&	2	&	2	&	7.81	&	8.30	&	6.22	&	25.10\%	&	$-5.91\%$		\\
tc1c10s3ct3	&	132.73	&	10800*	&	1	&	1	&	10.02	&	10.74	&	6.29	&	41.46\%	&	$-6.73\%$	\\
tc1c10s3ct4	&	101.84	&	10800*	&	2	&	2	&	11.97	&	12.16	&	10.46	&	13.99\%	&	$-1.56\%$		\\
tc2c10s2cf0	&	44.64	&	10800*	&	2	&	2	&	8.39	&	8.39	&	8.14	&	2.95\%	&	0\%	\\
tc2c10s2ct0	&	130.50	&	10800*	&	2	&	2	&	7.55	&	7.55	&	6.03	&	20.18\%	&	0\%	\\
tc2c10s3cf0	&	42.86	&	10800*	&	2	&	2	&	8.31	&	8.31	&	7.82	&	5.93\%	&	0\%	\\
tc2c10s3ct0	&	130.64	&	10800*	&	2	&	2	&	7.54	&	7.54	&	5.21	&	30.90\%	&	0\%	\\
    \hline
  \end{tabular}}
  \label{tab_sevrp-stoch_c10_S20_exponential}
\end{table}

\begin{table}[ht!]
  \centering
  \caption{10 customers - $|\mathcal{S}| = 50$ - exponential distribution.}
    \resizebox{\textwidth}{!}{
  \begin{tabular}{|c|c|c|c|c|c|c|c|c|c|}
    \hline
    \multirow{3}{*}{Instance} & \multicolumn{2}{|c|}{Time (s)} & \multicolumn{2}{|c|}{Number of routes}  & \multicolumn{3}{|c|}{Objective} & \multicolumn{2}{|c|}{Gap} \\
    \cline{2-10}
    & \multirow{2}{*}{Algorithm}& \multirow{2}{*}{Model} & \multirow{2}{*}{Algorithm} & \multirow{2}{*}{Model} & \multirow{2}{*}{Algorithm} & \multicolumn{2}{|c|}{Model} & \multirow{2}{*}{Gurobi} & \multirow{2}{*}{Algorithm-Model}\\
    \cline{7-8}
    &  &  &  &  &  & Incumbent & LB &  & \\
    \hline
tc0c10s2cf1	&	85.77	&	10800*	&	2	&	2	&	12.28	&	12.28	&	11.95	&	2.66\%	&	0\%	\\
tc0c10s2ct1	&	114.01	&	10800*	&	2	&	2	&	9.96	&	9.96	&	9.80	&	1.55\%	&	0\%	\\
tc0c10s3cf1	&	88.55	&	10800*	&	2	&	2	&	12.14	&	12.14	&	12.04	&	0.86\%	&	0\%	\\
tc0c10s3ct1	&	184.09	&	10800*	&	2	&	1	&	9.95	&	11.93	&	8.61	&	27.85\%	&	$-16.61\%$		\\
tc1c10s2cf2	&	272.67	&	10800*	&	2	&	2	&	7.64	&	7.64	&	6.98	&	8.59\%	&	$-0.05\%$		\\
tc1c10s2cf4	&	155.74	&	10800*	&	2	&	2	&	12.08	&	12.08	&	11.08	&	8.31\%	&	0\%	\\
tc1c10s2ct2	&	277.32	&	10800*	&	2	&	2	&	7.86	&	8.67	&	6.10	&	29.69\%	&	$-9.09\%$		\\
tc1c10s2ct3	&	155.11	&	10800*	&	2	&	2	&	11.62	&	12.65	&	6.26	&	50.51\%	&	$-8.13\%$		\\
tc1c10s2ct4	&	173.35	&	10800*	&	2	&	2	&	12.10	&	12.10	&	10.57	&	12.62\%	&	0\%	\\
tc1c10s3cf2	&	253.92	&	10800*	&	2	&	2	&	7.64	&	8.02	&	6.84	&	14.67\%	&	$-4.74\%$		\\
tc1c10s3cf4	&	183.11	&	10800*	&	2	&	2	&	12.08	&	13.14	&	9.95	&	24.25\%	&	$-5.54\%$		\\
tc1c10s3ct2	&	351.08	&	10800*	&	2	&	2	&	7.84	&	9.07	&	5.56	&	38.71\%	&	$-13.16\%$		\\
tc1c10s3ct3	&	245.48	&	10800*	&	1	&	1	&	10.08	&	13.77	&	5.33	&	61.31\%	&	$-26.77\%$		\\
tc1c10s3ct4	&	229.08	&	10800*	&	2	&	2	&	12.02	&	13.75	&	9.44	&	31.32\%	&	$-12.56\%$		\\
tc2c10s2cf0	&	92.67	&	10800*	&	3	&	3	&	10.87	&	10.87	&	10.43	&	4.04\%	&	0\%	\\
tc2c10s2ct0	&	292.45	&	10800*	&	2	&	2	&	7.57	&	8.01	&	4.52	&	43.62\%	&	$-5.60\%$		\\
tc2c10s3cf0	&	114.21	&	10800*	&	2	&	2	&	8.34	&	8.34	&	7.18	&	13.89\%	&	0\%	\\
tc2c10s3ct0	&	318.53	&	10800*	&	2	&	2	&	7.53	&	8.11	&	3.42	&	57.84\%	&	$-7.01\%$		\\
    \hline
  \end{tabular}}
  \label{tab_sevrp-stoch_c10_50_exponential}
\end{table}

\clearpage
\subsection{In-sample stability} \label{sec_appB_insample}
\begin{table}[ht!]
  \centering
  \caption{Approximate in-sample stability analysis based on the objective function value obtained from ILS-SP heuristic (Algorithm~\ref{alg:ILS-P}) for the 10-customer instances and $|\mathcal{S}| = \{5,10,20,50,100$\}. Three probability distributions for the energy consumption are considered.}
    \resizebox{\textwidth}{!}{
  \begin{tabular}{|c|c|c|c|c|c||c|c|c|c|c||c|c|c|c|c|c}
    \hline
    \multirow{2}{*}{Instance} &  \multicolumn{5}{|c||}{Uniform} & \multicolumn{5}{|c||}{Normal} & \multicolumn{5}{|c|}{Exponential}
    \\
    \cline{2-16}
     &  5 & 10 & 20 & 50 & 100  & 5 & 10 & 20 & 50 & 100 & 5 & 10 & 20 & 50 & 100
     \\
    \hline
tc0c10s2cf1 & 10.33 & 11.97 & 12.42 & 12.40 & 12.45 & 11.68	&11.82	&11.99 & 12.11 & 12.08 &	{	11.85	}	&	{	11.84	}	& {11.86} & {12.28} & {12.24}\\
tc0c10s2ct1 & 10.04 & 9.99 & 9.98 &	10.02 & 10.16  & 9.84	& 9.84 &	9.91 & 10.11 & 10.11&	{	9.96	}	&	{	9.94	}	& {9.97} & {9.96} & {10.15}\\
tc0c10s3cf1 & 11.51 & 12.27 & 12.30 & 12.24 & 12.22 & 11.51&	11.82&	12.13 & 12.12 & 12.19 &	{	11.67	}	&	{	11.84	}	& {12.08} & {12.14} & {12.18}\\
tc0c10s3ct1 & 10.02 & 10.00 & 10.00 & 10.20 & 10.20 & 9.94&	10.08	&10.01 & 10.17 & 10.18 &	{	9.94	}	&	{	10.01	} & {9.96} & {9.95} & {9.93}	\\
tc1c10s2cf2 & 7.13 & 7.44 &	7.52 &	7.84 &	8.79 & 7.14&	7.37&	7.59 & 7.61 & 7.62&	{	7.12	}	&	{	7.17	}	 & {7.18} & {7.64} & {7.66} \\
tc1c10s2cf4 & 11.90 & 12.14 & 12.21 & 12.22 &  12.23 & 12.02	&12.34	&12.18 & 12.06 & 12.09 &	{	12.15	}	&	{	12.01	}	& {12.08} & {12.08} & {12.05} \\
tc1c10s2ct2 & 7.36 & 7.59 &	7.73 &	7.75 &	7.76  & 7.28&	7.60&	7.67 & 7.76 & 7.78 &	{	7.34	}	&	{	7.36	}	&{7.41} & {7.86} & {7.86}\\
tc1c10s2ct3 & 10.24 & 10.39 & 10.44 & 10.47 & 11.59 & 10.83	&10.57&	10.41 & 11.26 & 11.60&	{	10.76	}	&	{	10.32	}	&{10.78}& {11.62} & {11.62}\\
tc1c10s2ct4 & 11.95 & 12.00 & 12.01 & 12.04 & 12.06 & 12.07	&12.02&	12.05 & 12.08 & 12.08 &	{	12.08	}	&	{	12.17	} &{12.08}& {12.10} & {12.07}	\\
tc1c10s3cf2 & 7.43	& 7.51	& 7.65 &  7.63 & 7.65 & 7.19	&7.17	&7.56 & 7.61 & 7.63 &	{	7.12	}	&	{	7.14	} & {7.61} & {7.64} & {8.85}	\\
tc1c10s3cf4 & 12.12 & 12.00 & 12.08 & 12.11 & 12.17 & 12.22 &	12.24 &	12.09 & 12.16 & 12.19 &	{	12.16	}	&	{	12.20	}	&{12.14} & {12.08} & {12.10}\\
tc1c10s3ct2 & 7.55 & 7.71 & 7.69 & 7.77 & 7.78 & 7.39	& 7.37&	7.77 & 7.77 & 7.72 &	{	7.37	}	&		7.45		&{7.81}& {7.84} & {7.82}\\
tc1c10s3ct3 & 9.73 & 9.67 & 9.91 & 10.08 & 10.13 & 9.86	&10.06&	10.05 & 10.07 & 10.04&	{	9.88	}	&	{	10.00	} &{10.02}& {10.08} & {10.11}	\\
tc1c10s3ct4 & 12.13 & 12.10 & 12.05 & 12.05 &12.05 & 11.91	&11.99	&12.10 & 12.09 & 12.04 &	{	12.02	}	&	{	12.02	}	&{11.97}&{12.02}& {12.04}\\
tc2c10s2cf0 & 7.75 & 8.02 & 8.09 & 11.14 & 11.09 & 8.25	&8.59&	11.05 & 11.07 & 11.14&	{	7.79	}	&	{	8.16	}	&{8.39}& {10.87}& {10.94}\\
tc2c10s2ct0 & 7.48 & 7.51 & 7.52 & 7.65 & 7.66 & 7.55	&7.55&	7.55 & 7.62 & 7.62&	{	7.50	}	&	{	7.54	}	&{7.55}& {7.57} & {7.63} \\
tc2c10s3cf0 & 8.25 & 8.15 & 11.06 & 11.07 & 11.17 & 8.22	&8.24&	11.14 & 11.23 & 11.19&	{	7.97	}	&	{	8.01	} & {8.31}& {8.34}& {11.10}	\\
tc2c10s3ct0 & 7.57 & 7.55 & 7.60 & 7.68 & 7.67 & 7.58	&7.58&	7.62 & 7.59 & 7.59&	{	7.46	}	&	{		7.47} & {7.54} & {7.53} & {7.52}	\\
\hline
  \end{tabular}}
  \label{tab_insample_stability_analysis}
\end{table}

\subsection{Stochastic measures} \label{sec_appB_stochmeasure}

\begin{table}[ht!]
  \centering
  \caption{Stochastic measures for the 10-customer instances and $|\mathcal{S}| = 20$. Three probability distributions for the energy consumption are considered.}
      \resizebox{\textwidth}{!}{
  \begin{tabular}{|c|c|c|c|c|c|c||c|c|c|c|c|c||c|c|c|c|c|c|}
    \hline
    \multirow{2}{*}{Instance} &  \multicolumn{6}{|c||}{Uniform} &  \multicolumn{6}{|c||}{Normal} &  \multicolumn{6}{|c|}{Exponential}
    \\
    \cline{2-19}
     &RP &  WS & \%EVPI & EVP & EEV & \%VSS &  RP &  WS & \%EVPI & EVP & EEV & \%VSS & RP &  WS & \%EVPI & EVP & EEV & \%VSS
     \\
    \hline
    tc0c10s2cf1	&12.42&	10.48	&15.58\%	&9.99	&inf	&inf	&11.99&	10.60&	11.61\%	&10.01	&inf	&inf	&11.86&	10.32&	13.03\%	&10.16&	inf&	inf\\
    tc0c10s2ct1	&9.98&	9.91	&0.66\%	&9.83&	9.98&	0	&9.91&	9.87&	0.32\%	&9.83&	9.91 &	0	&9.97&	9.92&	0.51\%	&9.83	&9.97&	0\\
    tc0c10s3cf1 &	12.30	&10.51	&14.55\%	&10.03&	inf	&inf	&12.13	&10.55	&13.02\%	&9.95	&inf	&inf&	12.08	&10.29&	14.80\%	&10.00	&inf	&inf\\
    tc0c10s3ct1	&10.00	&9.91&	0.88\%&	9.83	&10.00	&0	&10.01&	9.95&	0.59\%	&9.84&	10.01	&0	&9.96&	9.86	&1.07\%	 &9.84&	9.96&	0\\
tc1c10s2cf2	&7.52	&7.12	&5.31\%	&7.09&	inf	&inf	&7.59	&7.10&	6.38\%	&7.08&	inf &	inf	&7.18&	7.08&	1.40\%	&7.07&	inf&	inf\\
tc1c10s2cf4	&12.21&	11.79&	3.37\%	&11.85	&inf	&inf	&12.18	&11.89&	2.40\%	&11.84 &	inf	&inf	&12.08&	11.77&	2.58\%	&11.79&	inf	&inf\\
tc1c10s2ct2	&7.73&	7.27	&5.91\%	&7.23&	inf&	inf&	7.67	&7.27	&5.14\%	&7.24&	inf&	inf	&7.41	&7.28	&1.81\%	&7.29	&inf	&inf\\
tc1c10s2ct3	&10.44&	9.83&	5.79\%	&10.45&	inf&	inf&	10.41&	9.77	&6.12\%	&10.41&	inf	&inf	&10.78&	9.33	&13.42\%	&10.43&	inf&	inf\\
tc1c10s2ct4&	12.01&	11.45	&4.65\%&	11.36	&inf	&inf	&12.05&	11.52&	4.38\%&	11.30&	inf	&inf	&12.08&	11.48&	4.92\%	&11.26&	inf&	inf\\
tc1c10s3cf2 &	7.64	&7.11	&6.94\%	&7.08&	inf&	inf	&7.56	&7.10&	6.13\%	&7.08&	inf	&inf	&7.61	&7.09	&6.83\%	&7.07	&inf	&inf\\
tc1c10s3cf4	&12.08&	11.86&	1.87\%	&11.83&	inf	&inf	&12.09	&11.88&	1.68\%	&11.87	&inf	&inf	&12.14&	11.85&	2.35\%	&11.82&	inf&	inf\\
tc1c10s3ct2&	7.69	&7.30&	5.07\%	&7.26	&inf	&inf&	7.77	&7.26&	6.52\%&	7.24&	inf	&inf&	7.81	&7.27	&6.98\%	&7.25&	inf&	inf\\
tc1c10s3ct3&	9.91	&9.27	&6.47\%	&9.66	&10.42&	5.10\%	&10.05	&9.47	&5.79\%	&9.91	&10.06&	0.09\%	&10.02	&9.30&	7.17\%	&9.88	&10.23&	2.18\%\\
tc1c10s3ct4&	12.05&	11.43	&5.14\%	&11.36	&inf	&inf	&12.10	&11.41	&5.67\%	&11.37	&inf	&inf	&11.97&	11.35&	5.22\%	&11.33&	inf	&inf\\
tc2c10s2cf0&	8.09	&7.51	&7.12\%	&8.24	&8.09	&0.03\%	&11.05&	7.68&	30.47\%&	8.20&	inf&	inf	&8.39	&7.33	&12.60\%&	8.22	&inf	&inf\\
tc2c10s2ct0	&7.52	&7.35	&2.21\%&	7.49	&7.52	&0	&7.55&	7.41&	1.87\%	&7.44&	7.74	&2.53\%&	7.55&	7.33	&2.90\%	&7.47	&7.61&	0.81\%\\
tc2c10s3cf0	&11.06&	7.67&	30.63\%	&8.22&	inf&	inf&	11.14&	7.72	&30.66\%&	8.22&	inf&	inf	&8.31	&7.41	&10.82\%	&8.20	&inf	&inf\\
tc2c10s3ct0	&7.60&	7.43	&2.21\%	&7.52	&7.69&	1.16\%	&7.62&	7.46	&2.14\%&	7.58&	7.62 &	0	&7.54&	7.34&	2.63\%	&7.51&	7.54&	0\\
\hline
  \end{tabular}}
  \label{tab_stochastic_measures_10c_20scen}
\end{table}

\clearpage
\section{Detailed results for instances with 40, 80 customers} \label{sec_appendixC}
In the following, we report the results of the numerical experiments for the instances with 40 and 80 customers. Specifically, for the 40-customer instances, we consider an increasing number of scenarios ($|\mathcal{S}| = \{1, 10, 20, 50\}$) and exploring the case where an initial set of 50 scenarios is reduced to 20 through the FFS algorithm (Algorithm \ref{alg:FFS}). For the instances with 80 customers, we set $|\mathcal{S}| = 20$. All the results are obtained through ILS-SP heuristic (Algorithm~\ref{alg:ILS-P}) with a maximum number of iterations equal to 2000 and a runtime limit of 18000 seconds. The asterisk indicates that the time limit has been reached. Uniform distribution on the energy consumption is considered.

\begin{table}[ht!]
  \centering
  \caption{40 customers - $|\mathcal{S}|=\{1, 10\}$.}
  \resizebox{\textwidth}{!}{
  \begin{tabular}{|c|c|c|c|c||c|c|c|c|}
    \hline
    \multirow{2}{*}{Instance} & \multicolumn{4}{|c||}{$|S|=1$} & \multicolumn{4}{|c|}{$|S|=10$}\\
    \cline{2-9}
     & Time (s) & Number of iteration & Number of routes & Objective  & Time (s) & Number of iteration & Number of routes & Objective\\
    \hline
tc0c40s5cf0	&	661.19	&	2000	&	3	&	22.11	&	1417.58	&	2000	&	3	&	22.69\\
tc0c40s5cf4	&	686.12	&	2000	&	2	&	20.01	&	1550.06	&	2000	&	2	&	20.03\\
tc0c40s5ct0	&	669.10	&	2000	&	2	&	20.35	&	1484.50	&	2000	&	2	&	21.49\\
tc0c40s5ct4	&	692.58	&	2000	&	1	&	19.07	&	1510.30	&	2000	&	2	&	20.06\\
tc0c40s8cf0	&	593.09	&	2000	&	3	&	21.85	&	1959.07	&	2000	&	3	&	22.30\\
tc0c40s8cf4	&	1164.79	&	2000	&	3	&	17.96	&	8020.82	&	2000	&	4	&	18.56\\
tc0c40s8ct0	&	521.03	&	2000	&	1	&	19.51	&	2858.97	&	2000	&	2	&	19.84\\
tc0c40s8ct4	&	567.34	&	2000	&	1	&	18.64	&	2638.43	&	2000	&	1	&	18.90\\
tc1c40s5cf1	&	1261.19	&	2000	&	2	&	21.72	&	2165.60	&	2000	&	3	&	23.24\\
tc1c40s5ct1	&	1230.07	&	2000	&	2	&	22.53	&	2342.72	&	2000	&	2	&	22.49\\
tc1c40s8cf1	&	804.06	&	2000	&	1	&	20.52	&	3922.73	&	2000	&	2	&	20.94\\
tc1c40s8ct1	&	691.05	&	2000	&	1	&	19.41	&	2796.19	&	2000	&	2	&	19.92\\
tc2c40s5cf2	&	853.37	&	2000	&	1	&	15.95	&	2352.17	&	2000	&	1	&	16.28\\
tc2c40s5cf3	&	703.74	&	2000	&	2	&	13.40	&	2086.98	&	2000	&	2	&	13.56\\
tc2c40s5ct2	&	530.02	&	2000	&	2	&	14.52	&	2413.35	&	2000	&	1	&	15.21\\
tc2c40s5ct3	&	497.27	&	2000	&	2	&	13.91	&	1073.97	&	2000	&	2	&	14.14\\
tc2c40s8cf2	&	812.41	&	2000	&	1	&	15.64	&	3816.96	&	2000	&	1	&	15.53\\
tc2c40s8cf3	&	697.58	&	2000	&	3	&	13.32	&	3043.14	&	2000	&	3	&	13.41\\
tc2c40s8ct2	&	514.17	&	2000	&	2	&	14.50	&	2695.07	&	2000	&	1	&	14.25\\
tc2c40s8ct3	&	483.33	&	2000	&	3	&	12.98	&	1726.36	&	2000	&	3	&	13.04\\
\hline
  \end{tabular}}
    \label{tab_sevrp-stochastic_40c_s1s10}
\end{table}

\begin{table}[ht!]
  \centering
  \caption{40 customers - $|\mathcal{S}|=\{20, 50\}$.}
  \resizebox{\textwidth}{!}{
  \begin{tabular}{|c|c|c|c|c||c|c|c|c|}
    \hline
    \multirow{2}{*}{Instance} & \multicolumn{4}{|c||}{$|S|=20$} & \multicolumn{4}{|c|}{$|S|=50$}\\
    \cline{2-9}
     & Time (s) & Number of iteration & Number of routes & Objective  & Time (s) & Number of iteration & Number of routes & Objective\\
    \hline
tc0c40s5cf0	&	2174.05	&	2000	&	4	&	22.68	&	10800*	&	1392	&	4	&	22.79	\\
tc0c40s5cf4	&	2216.35	&	2000	&	2	&	20.35	&	10800*	&	1761	&	3	&	22.61	\\
tc0c40s5ct0	&	2201.56	&	2000	&	3	&	21.44	&	10800*	&	1875	&	3	&	21.86	\\
tc0c40s5ct4	&	2336.83	&	2000	&	2	&	21.00	&	10800*	&	1694	&	2	&	20.98	\\
tc0c40s8cf0	&	3048.14	&	2000	&	3	&	22.17	&	10800*	&	1355	&	4	&	22.61	\\
tc0c40s8cf4	&	10800*	&	1380	&	3	&	19.03	&	10800*	&	199	&	3	&	19.06	\\
tc0c40s8ct0	&	4429.88	&	2000	&	2	&	19.88	&	10800*	&	1095	&	2	&	20.25	\\
tc0c40s8ct4	&	3862.97	&	2000	&	1	&	20.06	&	10800*	&	1057	&	2	&	21.42	\\
tc1c40s5cf1	&	3340.73	&	2000	&	3	&	23.16	&	10800*	&	1291	&	4	&	28.70	\\
tc1c40s5ct1	&	3680.40	&	2000	&	3	&	23.74	&	10800*	&	1171	&	2	&	24.77	\\
tc1c40s8cf1	&	6042.46	&	2000	&	2	&	20.94	&	10800*	&	771	&	2	&	21.04	\\
tc1c40s8ct1	&	4351.24	&	2000	&	2	&	20.23	&	10800*	&	989	&	1	&	21.39	\\
tc2c40s5cf2	&	3022.32	&	2000	&	3	&	17.00	&	10800*	&	1521	&	2	&	17.25	\\
tc2c40s5cf3	&	2810.18	&	2000	&	2	&	13.60	&	10800*	&	1674	&	3	&	14.52	\\
tc2c40s5ct2	&	4049.67	&	2000	&	1	&	15.62	&	10800*	&	1184	&	1	&	16.95	\\
tc2c40s5ct3	&	1569.56	&	2000	&	2	&	14.10	&	7077.51	&	2000	&	2	&	14.26	\\
tc2c40s8cf2	&	7030.63	&	2000	&	1	&	15.62	&	10800*	&	580	&	1	&	15.75	\\
tc2c40s8cf3	&	4888.71	&	2000	&	3	&	13.43	&	10800*	&	962	&	3	&	13.41	\\
tc2c40s8ct2	&	5059.92	&	2000	&	1	&	14.34	&	10800*	&	713	&	1	&	14.42	\\
tc2c40s8ct3	&	2529.91	&	2000	&	3	&	13.09	&	10800*	&	1758	&	3	&	13.12	\\
\hline
  \end{tabular}}
    \label{tab_sevrp-stochastic_40c_s20s50}
\end{table}

\begin{table}[ht!]
  \centering
  \caption{40 customers - $|\mathcal{S}|=20$, reduced from an initial set of 50 scenarios.}
  \resizebox{0.7\textwidth}{!}{
  \begin{tabular}{|c|c|c|c|c|}
    \hline
    Instance & Time (s) & Number of iteration & Number of routes & Objective \\
    \hline
tc0c40s5cf0	&	2193.93 & 2000 & 4 & 22.38\\
tc0c40s5cf4	&	2440.75 & 2000 & 2 & 23.84\\
tc0c40s5ct0	&	2321.57 & 2000 & 2 & 21.15\\
tc0c40s5ct4	&	2303.74 & 2000 & 2 & 20.96\\
tc0c40s8cf0	&	3272.41 & 2000 & 3 & 22.41\\
tc0c40s8cf4	&   10800* &  1380 & 3	& 18.78\\
tc0c40s8ct0	&   4656.01 & 2000 & 2 & 20.26\\
tc0c40s8ct4	&	3796.65 & 2000 & 1 & 19.63\\
tc1c40s5cf1	&	3401.69 & 2000 & 4 & 25.78\\
tc1c40s5ct1	&	3532.22 & 2000 & 3 & 23.61\\
tc1c40s8cf1	&	6888.19 & 2000 & 2 & 20.66\\
tc1c40s8ct1	&	4069.98 & 2000 & 2 & 20.24\\
tc2c40s5cf2	&	3206.99 & 2000 & 3 & 16.99\\
tc2c40s5cf3	&	2963.66 & 2000 & 2 & 14.49\\
tc2c40s5ct2	&	3785.36 & 2000 & 1 & 15.24\\
tc2c40s5ct3	&	1459.03 & 2000 & 2 & 14.24\\
tc2c40s8cf2	&	7063.48 & 2000 & 1 & 15.71\\
tc2c40s8cf3	&	4916.70 & 2000 & 3 & 13.43\\
tc2c40s8ct2	&	4899.15 & 2000 & 1 & 14.42\\
tc2c40s8ct3	&	2616.29 & 2000 & 3 & 13.11
\\
\hline
  \end{tabular}}
    \label{tab_sevrp-stochastic_40c_50sr20}
\end{table}

\newpage
\clearpage 

\begin{table}[ht!]
  \centering
  \caption{80 customers - $|\mathcal{S}|=\{20, 50\}$.}
  \resizebox{\textwidth}{!}{
  \begin{tabular}{|c|c|c|c|c||c|c|c|c|}
    \hline
    \multirow{2}{*}{Instance} & \multicolumn{4}{|c||}{$|S|=20$} & \multicolumn{4}{|c|}{$|S|=50$}\\
    \cline{2-9}
     & Time (s) & Number of iteration & Number of routes & Objective  & Time (s) & Number of iteration & Number of routes & Objective\\
    \hline
tc0c80s8cf0	&	10800*	&	736	&	5	&	34.60	&	10800*	&	404	&	4	&	33.32	\\
tc0c80s8cf1	&	10800*	&	749	&	3	&	39.05	&	10800*	&	294	&	2	&	36.29	\\
tc0c80s8ct0	&	10800*	&	744	&	5	&	34.02	&	10800*	&	417	&	4	&	38.17	\\
tc0c80s8ct1	&	10800*	&	646	&	4	&	40.71	&	10800*	&	325	&	4	&	41.95	\\
tc0c80s12cf0	&	10800*	&	159	&	3	&	29.37	&	10800*	&	90	&	2	&	29.50	\\
tc0c80s12cf1	&	10800*	&	254	&	3	&	32.16	&	10800*	&	130	&	2	&	32.52	\\
tc0c80s12ct0	&	10800*	&	402	&	3	&	28.20	&	10800*	&	53	&	2	&	32.65	\\
tc0c80s12ct1	&	10800*	&	233	&	2	&	30.73	&	10800*	&	110	&	2	&	32.33	\\
tc1c80s8cf2	&	10800*	&	122	&	1	&	22.13	&	10800*	&	48	&	2	&	26.52	\\
tc1c80s8ct2	&	10800*	&	203	&	2	&	22.84	&	10800*	&	253	&	1	&	24.67	\\
tc1c80s12cf2	&	10800*	&	182	&	1	&	22.29	&	10800*	&	69	&	3	&	23.16	\\
tc1c80s12ct2	&	10800*	&	193	&	1	&	20.88	&	10800*	&	92	&	2	&	23.14	\\
tc2c80s8cf3	&	10800*	&	559	&	3	&	23.55	&	10800*	&	302	&	4	&	25.66	\\
tc2c80s8cf4	&	10800*	&	172	&	2	&	24.59	&		10800*	&	57	&	4	&	26.07	\\
tc2c80s8ct3	&	10800*	&	524	&	3	&	23.32	&		10800*	&	400	&	3	&	25.83	\\
tc2c80s8ct4	&	10800*	&	168	&	4	&	25.05	&		10800*	&	71	&	3	&	25.76	\\
tc2c80s12cf3	&	10800*	&	145	&	3	&	21.22	&		10800*	&	110	&	4	&	22.83	\\
tc2c80s12cf4	&	10800*	&	250	&	2	&	21.69	&		10800*	&	87	&	2	&	21.34	\\
tc2c80s12ct3	&	10800*	&	168	&	3	&	20.50	&		10800*	&	88	&	2	&	21.93	\\
tc2c80s12ct4	&	10800*	&	251	&	2	&	20.17	&		10800*	&	87	&	1	&	20.12	\\
\hline
  \end{tabular}}
    \label{tab_sevrp-stochastic_80c_s20s50}
\end{table}

\begin{table}[ht!]
  \centering
  \caption{80 customers - $|\mathcal{S}|=20$, reduced from an initial set of 50 scenarios.}
  \resizebox{0.7\textwidth}{!}{
  \begin{tabular}{|c|c|c|c|c|}
    \hline
     Instance & Time (s) & Number of iteration & Number of routes & Objective \\
    \hline
tc0c80s8cf0	&	10800*	&	842	&	3	&	34.58	\\
tc0c80s8cf1	&	10800*	&	637	&	3	&	33.01	\\
tc0c80s8ct0	&	10800*	&	657	&	6	&	36.35	\\
tc0c80s8ct1	&	10800*	&	638	&	2	&	39.62	\\
tc0c80s12cf0	&	10800*	&	178	&	3	&	31.00	\\
tc0c80s12cf1	&	10800*	&	265	&	3	&	31.58	\\
tc0c80s12ct0	&	10800*	&	141	&	3	&	30.37	\\
tc0c80s12ct1	&	10800*	&	281	&	3	&	31.39	\\
tc1c80s8cf2	&	10800*	&	144	&	3	&	25.08	\\
tc1c80s8ct2	&	10800*	&	356	&	3	&	25.44	\\
tc1c80s12cf2	&	10800*	&	159	&	1	&	21.67	\\
tc1c80s12ct2	&	10800*	&	254	&	1	&	20.98	\\
tc2c80s8cf3	&	10800*	&	554	&	4	&	24.32	\\
tc2c80s8cf4	&	10800*	&	158	&	2	&	23.72	\\
tc2c80s8ct3	&	10800*	&	711	&	3	&	25.56	\\
tc2c80s8ct4	&	10800*	&	159	&	2	&	23.79	\\
tc2c80s12cf3	&	10800*	&	396	&	4	&	22.01	\\
tc2c80s12cf4	&	10800*	&	239	&	2	&	21.69	\\
tc2c80s12ct3	&	10800*	&	246	&	2	&	20.46	\\
tc2c80s12ct4	&	10800*	&	202	&	2	&	20.60	\\
\hline
  \end{tabular}}
\label{tab_sevrp-stochastic_80c_50sr20}
\end{table}

\clearpage
\section{Detailed results for instances with 40 customers - \\ managerial insights} \label{sec_appendixD_managerial}
In the following, we report the results of the numerical experiments for the instances with 40 customers and different combinations of parameters. Specifically, we consider $Q^T=\alpha^TQ^{max}$, with $\alpha^T=\{20\%, 30\%, 40\%\}$, and $Q^G=\alpha^GQ^{max}$, with $\alpha^G=\{70\%, 80\%, 90\%\}$. The number of scenarios is set to 20, reduced from an initial set of 50 through the FFS algorithm (Algorithm \ref{alg:FFS}). All the results are obtained through ILS-SP heuristic (Algorithm~\ref{alg:ILS-P}) with a maximum number of iterations equal to 2000 and a runtime limit of 18000 seconds. Uniform distribution on the energy consumption is considered.

\begin{table}[ht!]
  \centering
  \caption{40 customers - analysis on $Q^T=\alpha^TQ^{max}$.}
  \resizebox{0.87\textwidth}{!}{
  \begin{tabular}{|c|c|c|c|c|c|c|}
    \hline
    \multirow{3}{*}{Instance} & \multicolumn{6}{|c|}{$\alpha^T$}\\
    & \multicolumn{2}{|c|}{$20\%$} & \multicolumn{2}{|c|}{$30\%$} & \multicolumn{2}{|c|}{$40\%$}\\
    \cline{2-7}
     & Number of routes & Objective  & Number of routes & Objective  & Number of routes & Objective \\
    \hline
tc0c40s5cf0	&	4	&	22.59	&	4	&	22.38	&	4	&	25.12	\\
tc0c40s5cf4	&	2	&	20.11	&	2	&	23.84	&	2	&	23.27	\\
tc0c40s5ct0	&	3	&	20.41	&	2	&	21.15	&	3	&	23.30	\\
tc0c40s5ct4	&	3	&	21.33	&	2	&	20.96	&	2	&	22.65	\\
tc0c40s8cf0	&	4	&	21.94	&	3	&	22.41	&	2	&	25.14	\\
tc0c40s8cf4	&	2	&	17.99	&	3	&	18.78	&	3	&	19.80	\\
tc0c40s8ct0	&	2	&	18.92	&	2	&	20.26	&	2	&	21.40	\\
tc0c40s8ct4	&	2	&	18.91	&	1	&	19.63	&	2	&	21.48	\\
tc1c40s5cf1	&	3	&	27.20	&	4	&	25.78	&	2	&	26.43	\\
tc1c40s5ct1	&	3	&	25.10	&	3	&	23.61	&	3	&	29.87	\\
tc1c40s8cf1	&	1	&	20.88	&	2	&	20.66	&	2	&	22.43	\\
tc1c40s8ct1	&	2	&	22.06	&	2	&	20.24	&	1	&	23.00	\\
tc2c40s5cf2	&	3	&	17.54	&	3	&	16.99	&	1	&	17.77	\\
tc2c40s5cf3	&	3	&	14.64	&	2	&	14.49	&	2	&	14.81	\\
tc2c40s5ct2	&	1	&	15.19	&	1	&	15.24	&	1	&	16.22	\\
tc2c40s5ct3	&	2	&	14.35	&	2	&	14.24	&	3	&	14.71	\\
tc2c40s8cf2	&	2	&	15.94	&	1	&	15.71	&	2	&	16.46	\\
tc2c40s8cf3	&	3	&	13.79	&	3	&	13.43	&	2	&	13.26	\\
tc2c40s8ct2	&	1	&	14.52	&	1	&	14.42	&	1	&	15.07	\\
tc2c40s8ct3	&	3	&	13.30	&	3	&	13.11	&	2	&	13.13	\\
\hline
  \end{tabular}}
    \label{tab_sevrp-stochastic_detailed_Qt}
\end{table}

\begin{table}[ht!]
  \centering
  \caption{40 customers - analysis on $Q^G=\alpha^G Q^{max}$.}
  \resizebox{0.87\textwidth}{!}{
  \begin{tabular}{|c|c|c|c|c|c|c|}
    \hline
    \multirow{3}{*}{Instance} & \multicolumn{6}{|c|}{$\alpha^G$}\\
    & \multicolumn{2}{|c|}{$70\%$} & \multicolumn{2}{|c|}{$80\%$} & \multicolumn{2}{|c|}{$90\%$}\\
    \cline{2-7}
     & Number of routes & Objective  & Number of routes & Objective  & Number of routes & Objective \\
    \hline
tc0c40s5cf0	&	4	&	22.55	&	4	&	22.38	&	6	&	24.76	\\
tc0c40s5cf4	&	2	&	21.20	&	2	&	23.84	&	4	&	23.67	\\
tc0c40s5ct0	&	2	&	21.21	&	2	&	21.15	&	4	&	22.87	\\
tc0c40s5ct4	&	2	&	21.02	&	2	&	20.96	&	3	&	22.42	\\
tc0c40s8cf0	&	2	&	22.21	&	3	&	22.41	&	5	&	24.99	\\
tc0c40s8cf4	&	2	&	18.46	&	3	&	18.78	&	3	&	19.03	\\
tc0c40s8ct0	&	2	&	19.64	&	2	&	20.26	&	3	&	21.46	\\
tc0c40s8ct4	&	2	&	20.21	&	1	&	19.63	&	2	&	22.68	\\
tc1c40s5cf1	&	3	&	23.36	&	4	&	25.78	&	$-$	&	inf	\\
tc1c40s5ct1	&	3	&	23.11	&	3	&	23.61	&	4	&	27.56	\\
tc1c40s8cf1	&	2	&	21.49	&	2	&	20.66	&	2	&	23.61	\\
tc1c40s8ct1	&	1	&	21.01	&	2	&	20.24	&	2	&	26.08	\\
tc2c40s5cf2	&	3	&	16.83	&	3	&	16.99	&	3	&	19.43	\\
tc2c40s5cf3	&	2	&	13.57	&	2	&	14.49	&	3	&	14.69	\\
tc2c40s5ct2	&	1	&	15.64	&	1	&	15.24	&	3	&	17.83	\\
tc2c40s5ct3	&	2	&	14.35	&	2	&	14.24	&	2	&	14.39	\\
tc2c40s8cf2	&	2	&	16.18	&	1	&	15.71	&	4	&	18.39	\\
tc2c40s8cf3	&	2	&	13.28	&	3	&	13.43	&	3	&	13.53	\\
tc2c40s8ct2	&	1	&	14.73	&	1	&	14.42	&	1	&	16.99	\\
tc2c40s8ct3	&	1	&	13.07	&	3	&	13.11	&	3	&	13.18	\\
\hline
  \end{tabular}}
    \label{tab_sevrp-stochastic_detailed_Qg}
\end{table}

\end{document}